\newtheorem{remark}{Remark}
\newtheorem*{remark*}{Remark}
\newtheorem{definition}{Definition}
\newtheorem{proposition}{Proposition}
\newtheorem{corollary}{Corollary}
\newtheorem{example}{Example}
\newtheorem{theorem}{Theorem}
\newmdtheoremenv{framedtheorem}{Theorem}
\newcommand{\bip}{\mathcal{B}}
\newcommand{\psl}{\mathrm{PSL}_2(\mathbb{Z})}
\newcommand{\aut}{\mathrm{Aut}}
\newcommand{\en}{\mathrm{End}}
\newcommand{\He}{\Gamma_0}
\newcommand{\GL}{\mathrm{GL}}
\newcommand{\PGL}{\mathrm{PGL}}
\newcommand{\SL}{\mathrm{SL}}
\newcommand{\Bp}{\mathcal{B}^+}
\newcommand{\PBp}{\mathrm{P}\mathcal{B}^+}
\newcommand{\La}{\mathcal{L}}
\newcommand{\PLa}{\mathrm{P}\mathcal{L}}
\newcommand{\fix}{\mathrm{Fix}}
\begin{document}

\title{Cusps, Congruence Groups and Monstrous Dessins}
\author{Valdo Tatitscheff${}^{1,2,3}$ \and Yang-Hui He${}^{2,4,5}$ \and John McKay${}^{6}$}

\date{}
 
\maketitle

\begin{center}
{\small
\begin{tabular}{ll}
${}^{1}$&
Department of Mathematics, Ecole Normale Sup\'erieure, Paris 75005, France\\
${}^{2}$&
Department of Mathematics, City, University of London, EC1V 0HB, UK\\
${}^{3}$&
IRMA, UMR 7501, Universit\'e de Strasbourg et CNRS, \\
& 7, rue Ren\'e Descartes, 67000 Strasbourg, France \\
${}^{4}$&
Merton College, University of Oxford, OX14JD, UK \\
${}^{5}$&
School of Physics, NanKai University, Tianjin, 300071, P.R.~China \\
${}^{6}$
&Department of Mathematics and Statistics,\\
&Concordia University, 1455 de Maisonneuve Blvd.~West,\\
&Montreal, Quebec, H3G 1M8, Canada\\
~\\
~\\
\end{tabular}
}
\url{valdo.tatitscheff@normalesup.org}\\
\url{hey@maths.ox.ac.uk}\\
\url{mckay@encs.concordia.ca}
\end{center}

\vspace{0.5in}

\abstract{We study general properties of the dessins d'enfants associated with the Hecke congruence subgroups $\He(N)$ of the modular group $\psl$. The definition of the $\He(N)$ as the stabilisers of couples of projective lattices in a two-dimensional vector space gives an interpretation of the quotient set $\He(N)\backslash\psl$ as the projective lattices $N$-hyperdistant from a reference one, and hence as the projective line over the ring $\mathbb{Z}/N\mathbb{Z}$. The natural action of $\psl$ on the lattices defines a dessin d'enfant structure, allowing for a combinatorial approach to features of the classical modular curves, such as the torsion points and the cusps. 
We tabulate the dessins d'enfants associated with the $15$ Hecke congruence subgroups of genus zero, which arise in Moonshine for the Monster sporadic group.} 

\newpage

\section*{Introduction and motivations}

\subsection*{Monstrous moonshine}

\def\II{\mathbb{I}}
\def\IM{\mathbb{M}}
\def\IR{\mathbb{R}}

The vast subject of Moonshine began with the third author's observation, initially thought to be outlandish, that
\begin{equation}\label{McKay}
196,884 = 196,883 + 1 \ .
\end{equation}
The number on the left is the linear Fourier coefficient of the Klein $J$-function, and lives in the world of modular forms, while the number on the right comes from the first two irreducible representations of the Monster sporadic group $\IM$, and lives in the world of finite group theory. These two fields are seemingly disparate.

Based on the observation in Equation \ref{McKay} and generalisations of it, Thompson conjectured in \cite{thompson} that there exists a natural graded infinite-dimensional representation $W^\natural=\bigoplus_{n=-1}^\infty W^\natural_n$ of $\IM$, such that $(\dim(W^\natural_n))_n$ is the sequence of Fourrier coefficients of Klein's $J$-function, and Atkin, Fong and Smith verified that such an $\IM$-module exists \cite{smith}. The construction of this module was later given in \cite{FLM} by Frenkel, Lepowsky and Meurman, thus proving Thompson's conjecture. 

The latter had also further suggested to investigate the properties of the graded-traced functions now called {\bf McKay-Thompson series}
\[
T_{\bar{g}}(q) = q^{-1} \sum\limits_{k=0}^\infty {\rm ch}_{W_k^\natural}(\bar{g}) q^k 
=  q^{-1} + 0 + h_1(\bar{g})q + h_2(\bar{g}) q^2 + \ldots \ ,
\]
where ${\rm ch}_{W_k^\natural}(\bar{g})$ denotes the character of the representation $W_k^\natural$ of $\IM$, evaluated on the conjugacy class $\bar{g}$. This ultimately prompted the Monstrous Moonshine conjectures of \cite{conwaynorton}: each McKay-Thompson series $T_{\bar{g}}(q)$ corresponding to a conjugacy class $\bar{g}$ in $\IM$ is, for $q=\exp(2\pi i z)$, the (normalised) generator of a {\em genus zero function field} for a group $G$ between the Hecke group $\Gamma_0(N)$ of level $N$ and its normaliser $\Gamma_0(N)^+$ in $PSL(2,\IR)$, generated by $\He(N)$ and certain Atkin-Lehner involutions \cite{atkin}. Moreover, the level $N$ is a multiple of $n = \mbox{Order}(\bar{g})$, the ratio $N / n = h \in \mathbb{Z}_{>0}$ divides $24$, and $h^2$ divides $N$. 
In particular, for the conjugacy class of the identity the McKay-Thompson series is the Fourier expansion of the $J$-function. The latter generates the function field of the genus zero quotient of the Poincaré half-plane by the modular group $\psl$.

Borcherds proved these conjectures in \cite{borcherds}, using in a central way the monster module constructed by Frenkel, Lepowsky and Meurman. 

There are 194 conjugacy classes (and hence 194 irreducible representations) of $\IM$ (see \cite{atlas}) and due to complex conjugation they give only 172 distinct McKay-Thompson series (which are not independent: linear relations brings the number of independent series down to 163).
Each of these 172 conjugacy classes corresponds to a group $G_{\overline{g}}$ which lies (strictly,for most of them) between $\Gamma_0(N)$ and $\Gamma_0(N)^+$. Precisely 15 correspond to the Hecke groups of our concern (and do not involve Atkin-Lehner involutions). 
 
Each group $G_{\bar{g}}$ is a subgroup of $\mathrm{PSL}_2(\mathbb{R})$, hence it defines a complex surface: the quotient of the upper half-plane $\mathbb{H}$ by $G_{\bar{g}}$. This complex surface is always of genus $0$, has hyperbolic cusps and may have torsion points. The tabulation of the conjugacy classes of the Monster, together with quantities related to them through the moonshine correspondence (such as the number of cusps of the corresponding modular curve), is given in \cite{conwaynorton}.

For more details on the Monstrous Moonshine programme, see the excellent accounts \cite{gannon,Duncan:2014vfa}.

\subsection*{Cusps and exceptional Lie algebras}

The motivation for this work essentially comes from some observations listed in \cite{hemckay}, which let one hope for some links between the three biggest sporadic finite simple groups (the monster group $\mathbb{M}$, the baby monster $\mathbb{B}$ and Fischer's sporadic group $Fi_{24}'$), and the three biggest exceptional Lie algebras ($E_8$, $E_7$ and $E_6$). While the purpose of this paper is not to address such possible correspondences, and is dedicated to the exposition of a new approach to some properties of the Hecke groups from a purely combinatorial point of view, let us nevertheless review briefly those intriguing epiphanies. \\

The most famous observation (that we will leave aside) is known as \textit{McKay's monstrous $E_8$ observation} (see \cite{Conway1985}, \S 14). The conjugacy classes of the monster group are conventionally labeled with a number and a letter, where the number is the order of the elements in this class and the letter, a label which distinguishes the different classes with that order. In particular, there are two conjugacy classes of order $2$, denoted $2A$ and $2B$. Multiplying two elements of the class $2A$ yields an element which is in one of the conjugacy classes $1A, 2A, 2B, 3A, 3C, 4A, 4B, 5A$ or $6A$. The third author noticed a striking correspondence between this sequence and the extended $E_8$ diagram. The same type of phenomenon happens between the elements of the pairs $(\mathbb{B},E_7)$ and $(Fi_{24},E_6)$. \\

The number of \textit{cusps} of the modular curves corresponding to the conjugacy classes in $\mathbb{M}$ is either $1$, $2$, $3$, $4$, $6$ or $8$. The total number of cusps of the modular curves appearing in the monstrous moonshine correspondence for the group $\mathbb{M}$ (respectively, $2\mathbb{B}$, and $3Fi_{24}$ which are subgroups of $\mathbb{M}$) is $360 =3\times120$ (respectively, $448 =2^3\times56$, and $440=2^3(2\times27+1)$). The exceptional Lie algebra $\mathfrak{e_8}$ has $120$ positive roots (respectively, $56$ is the dimension of the smallest fundamental representation of $\mathfrak{e_7}$, and $27$ is the dimension of the adjoint representation of $\mathfrak{e_6}$). Any relationship between sporadic groups and exceptional Lie algebras would be quite amazing, and thus we are eyeing a better understanding of the cusps of those modular surfaces. \\

The \textit{coincidence} that directly motivates this article concerns the Hecke congruence subgroups of $\psl$, which are denoted $\He(N)$. These define special modular curves called the \textit{classical modular curves} (and denoted $X_0(N)$). Those of genus zero all appear in the monstrous moonshine correspondence as linked to conjugacy classes in $\mathbb{M}$. It is known that among the $X_0(N)$, $15$ of them exactly have genus $0$, namely when 
$$N\in\{1,2,3,4,5,6,7,8,9,10,12,13,16,18,25\}=:I_0\ .$$ 

Let $c(N)$ denote the number of cusps of $X_0(N)$. Then:
$$\sum_{N\in I_0} c(N)=56\ ,$$
$$\sum_{N\in I_0} c(N)^2=266=2\times 133\ .$$
The two numbers $56$ and $133$ are respectively the dimensions of the smallest fundamental representation and of the adjoint representation of the exceptional Lie algebra $\mathfrak{e}_7$. The relationship between the set of cusps of the Hecke subgroups of genus zero and $\mathfrak{e_7}$ still remains to be established, if any.

The approach developed in this paper (initially thought as an auxiliairy way to define the cusps of the Hecke groups) yields a nice combinatorial framework to study the classical modular curves. There is no need for complex geometry nor elliptic elements of $\mathrm{PSL}_2(\mathbb{R})$ in order to define and study the cusps of the Hecke groups - complex geometry only appears as one speaks of Hauptmoduln, such as Klein's invariant $J$. If the Lie algebras are supposed to connect with monstrous moonshine through the cusps of the modular curves, this simpler framework may be of some interest.

\section*{Summary and plan}

The cornerstone of what follows is Conway's approach to arithmetic groups in terms of their action on projective lattices in a real vector space \cite{conway}. Because we are mainly following the introduction to these ideas given in \cite{duncan}, moreover presented in details (in the specific framework we are interested in) in Appendix \ref{app}, we get to the heart of the matter as directly as possible. 

Section \ref{sec1} aims at a combinatorial description of the quotient set $\He(N)\backslash\psl$. This set is naturally identified with the set $\mathrm{P}\mathcal{L}_1^N$ of projective lattices $N$-hyperdistant from a reference $L_1$, which is itself in bijection with $\mathbb{P}^1(\mathbb{Z}/N\mathbb{Z})$, the projective line over the ring $\mathbb{Z}/N\mathbb{Z}$. The resulting bijection
\begin{equation}\label{isomintro}
\He(N)\backslash\psl \simeq \mathbb{P}^1(\mathbb{Z}/N\mathbb{Z})
\end{equation}
becomes very interesting as one studies the right action of $\psl$ on $\He(N)\backslash\psl$. The projective line indeed has homogeneous coordinates, in terms of which the right action of $\psl$ takes a pretty guise. The bijection in Equation \ref{isomintro} is elementary and known since long - it appears for example in \cite{Manin_1972}; the derivation given below however has the advantage of being elementary and quite straightforward, the third description of this set as a set of projective lattices being of great help. Conversely, homogeneous coordinates on $\mathbb{P}^1(\mathbb{Z}/N\mathbb{Z})$ provide coordinates on $\mathrm{P}\mathcal{L}_1^N$, which can thus be described in details.

In section \ref{sec2} we first review some general features of Grothendieck's dessins d'enfants, and then investigate some of the properties of the special dessins associated with the $\He(N)$. The set of edges of the latter is naturally in bijection with $\He(N)\backslash\He(1)\simeq\La_1^N\simeq\mathbb{P}^1(\mathbb{Z}/N\mathbb{Z})$. Homogeneous coordinates on the projective line provide an algorithmic way to compute these dessins d'enfants, and hence to understand the structure of the modular curves associated with the $\He(N)$.
 
The number of torsion points, as well as the cusps and their width, are controlled by elementary algebraic equations. These equations are also know since long - they appear for example in \S 1.6 of \cite{shimura} or as Prop. 2.2 in \cite{Manin_1972}, but our approach seems interesting {\em per se}. We compute the Dirichlet $L$-series corresponding to the sequence $(c(N))_{N\geq1}$, and express this series in terms of the Riemann $\zeta$-function. For the sake of completeness, we explain in some details how one goes from our dessins d'enfants associated with the $\He(N)$, to the complex modular curves $X_0(N)$. Since explicit rational parametrisations of the genus zero classical modular curves are known, there are explicit expressions of Bely\u{\i} maps which yield the Hecke dessins d'enfants of genus $0$, and we tabulate them.

Section \ref{sec3} displays, for each of the $15$ Hecke modular groups of genus $0$, a fundamental domain in $\mathbb{H}$, the corresponding dessin d'enfants, and a list of its cusps in terms of projective lattices.

\tableofcontents

%%%%%%%%%%%%%%%%%%%%%%%%%%%%%%%%%%%%%%%%%
\section*{Nomenclature}

\begin{itemize}
	\item Real segments will be written in a standard way: 
	$$[a,b],\ ]a,b[,\ ]a,b]\ \mathrm{or}\ [a,b[\ ,$$ 
	where $a,b\in\mathbb{R}\cup\{\pm\infty\}$, depending on whether they are closed, open, open-closed or closed-open. 
	\item For $M$ and $M$ two integers, $[|M,N|]$ denotes the set of integers between $M$ and $N$, $[|M,N|[$, the set of integers between $M$ and $N$ excluding $N$, ...
	\item The set $\mathrm{Div}(N)$ is the set of positive divisors of a non-zero positive integer $N$. 
	\item For $k,N\in\mathbb{N}$, $k$ divides $N$ is written $k|N$. 
	\item The group of permutations of a set $E$ is denoted $\mathfrak{S}(E)$. 
	\item If $H$ and $G$ are two groups, $H<G$ means that $H$ is a subgroup of $G$.
\end{itemize}

Let now $V$ be a two-dimensional real vector space with basis $(e_1,e_2)$. Lattices in $V$ are by definition the $\mathbb{Z}$-submodules of $V$ isomorphic to $\mathbb{Z}^2$. Since we will also need projective lattices, regular lattices (the ones we just defined) are often referred to as non-projective lattices. Let $$L_1^{np}=\mathbb{Z}\cdot e_1 +\mathbb{Z}\cdot e_2$$ 
be the non-projective lattice generated by the vectors of the basis $(e_1,e_2)$. The set $\mathcal{L}$ of non-projective lattices in $V$ is in bijection with with $\SL_2(\mathbb{Z})\backslash\GL_2^+(\mathbb{R})$. A lattice $L^{np}\in\mathcal{L}$ such that $L^{np}\cap L_1^{np}$ has finite index in both $L^{np}$ and $L_1^{np}$ is said to be commensurable with $L_1^{np}$. 

A projective lattice in $V$ is an equivalence class of lattices in $V$ up to (rational or real) scaling. Let $L_1$ be the projective lattice containing $L_1^{np}$. Commensurability transposes well to projective lattices. The set $\PLa_1$ of projective lattices commensurable with $L_1$ is identified with $\psl\backslash\PGL_2^+(\mathbb{Q})$. 

There exists a symmetric function 
$$\delta:\PLa_1\times\PLa_1\rightarrow\mathbb{N}_{>0}$$ 
called hyperdistance. The right-action of $\psl$ on $\PLa_1$ preserves the hyperdistance. For any $N\in\mathbb{N}_{>0}$, we let $\PLa_1^N\subset\PLa_1$ denote the set of projective lattices $N$-hyperdistant from $L_1$, i.e. the set of projective lattices $L$ such that $\delta(L,L_1)=N$. 

The group $G=\PGL_2^+(\mathbb{Q})$ acts on the right of $\PLa_1$, and the modular group $\psl$ is naturally identified with $\mathrm{Stab}_G(L_1)$. 

As shown in Prop. \ref{representatives}, the set $\psl\backslash\PGL_2^+(\mathbb{Q})$ is identified with the set $\mathcal{M}$ of matrices of the form $\left(\begin{array}{cc} M & b \\ 0 & 1 \end{array}\right)$, for $M\in\mathbb{Q}^*_+$ and $b\in\mathbb{Q}\cap[0,1[$. Following \cite{conway}, we write $L_{M,b}$ to refer to the projective lattice commensurable with $L_1$ corresponding to the class 
$$\psl\cdot\left(\begin{array}{cc} M & b \\ 0 & 1 \end{array}\right)\in\psl\backslash\PGL_2^+(\mathbb{Q})\ .$$
When $b=0$, the label $L_{M,b}$ is shortened to $L_M$.

The Hecke congruence subgroup of level $N$ of the modular group is defined to be $\He(N)=\mathrm{Stab}_G(L_1,L_N)$.

In Appendix \ref{app} more details on this approach to arithmetic groups via their action on lattices are given.

\begin{remark*}
	Note that although $V$ is the real vector space in which we consider (projective) lattices in order to define and study the modular groups of our interest, $V$ also generically denotes the set of vertices of graphs - and we will stick to this conventional notation. What $V$ stands for in what follows is however always clear from the context, hence we hope that this unfortunate notation conflict will not be too much of a discomfort, while reading.
\end{remark*}

\newpage

%%%%%%%%%%%%%%%%%%%%%%%%%%%%%%%%%%%%%%%%%%%%%%%%%%%%%%%%%%%%
%%%%%%%%%%%%%%%%%%%%%%%%%%%%%%%%%%%%%%%%%%
\section{$\He(N)\backslash\He(1)$ as the projective line $\mathbb{P}^1(\mathbb{Z}/N\mathbb{Z})$}\label{sec1}
The goal of this section is to prove that 
\begin{equation}\label{iso}
\mathbb{P}^1(\mathbb{Z}/N\mathbb{Z})\simeq \mathrm{P}\mathcal{L}_1^N\simeq\He(N)\backslash\He(1) \ ,
\end{equation}
These bijections provide a nice framework to study $\He(N)\backslash\He(1)$: conceptually, because of the definition of $\mathrm{P}\mathcal{L}_1^N$, as well as in practice, since the homogeneous coordinates on $\mathbb{P}^1(\mathbb{Z}/N\mathbb{Z})$ give an explicit description of $\He(N)\backslash\He(1)$. 
We first construct the bijection
$\mathbb{P}^1(\mathbb{Z}/N\mathbb{Z})\leftarrow\mathrm{P}\mathcal{L}_1^N$
and then, the other one: $\mathrm{P}\mathcal{L}_1^N\simeq\He(N)\backslash\He(1)$.
From this one easily computes the index $[\He(1):\He(N)]$.

%%%%%%%%%%%%%%%%%%%%%%%%%%%%
\subsection{The bijection $\mathrm{P}\mathcal{L}_1^N\rightarrow\mathbb{P}^1(\mathbb{Z}/N\mathbb{Z})$}
Let $\tilde{L}^{np}$ be the non-projective lattice commensurable with $L_1^{np}$ corresponding to some coset 
$$\SL_2(\mathbb{Z})\cdot\left(\begin{array}{cc} a & b \\ c & d \end{array}\right)\in\SL_2(\mathbb{Z})\backslash\GL_2^+(\mathbb{Q})\ .$$
It is a subgroup of $L_1^{np}$ if and only if $a,b,c,d\in\mathbb{Z}$. The index $N=[L_1^{np}:\tilde{L}^{np}]$ equals
$\det\left(\begin{array}{cc} a & b \\ c & d \end{array}\right)=N$.
The order of any element in $L_1^{np}/\tilde{L}^{np}$ divides $N$, hence
$$N\cdot L_1^{np}\leq\tilde{L}^{np}\leq L_1^{np}$$
For all $n\in\mathbb{N}_{>0}$ let
\begin{equation}
\mathrm{red}_n: \left\{\begin{array}{ccl} \tilde{L}^{np} & \rightarrow & (\mathbb{Z}/n\mathbb{Z})^2 \\ (k\ l)_\mathrm{ref} & \mapsto & (\mathrm{red}_n(k),\mathrm{red}_n(l))\end{array}\right. 
\end{equation}
be the map of reduction modulo $n$, where $(k\ l)_\mathrm{ref}$ denotes the coordinate expression of a point in $\tilde{L}^{np}$, in the reference basis. 

\begin{proposition}
The reduction modulo $N$ of the sublattice $\tilde{L}^{np}$ of $L_1^{np}$ of index $N\in\mathbb{N}_{>0}$ is a $\mathbb{Z}/N\mathbb{Z}$-submodule of $(\mathbb{Z}/N\mathbb{Z})^2$, and its cardinality is $N$.
\end{proposition}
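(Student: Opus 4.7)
The plan is to identify the image $\mathrm{red}_N(\tilde{L}^{np})$ with the quotient $\tilde{L}^{np}/(N\cdot L_1^{np})$, and then use the tower $N\cdot L_1^{np}\leq \tilde{L}^{np}\leq L_1^{np}$ together with basic index arithmetic to extract the cardinality.

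First, I would observe that $\mathrm{red}_N$, viewed as a map on $L_1^{np}\simeq\mathbb{Z}^2$, is a surjective $\mathbb{Z}$-module morphism onto $(\mathbb{Z}/N\mathbb{Z})^2$ with kernel exactly $N\cdot L_1^{np}$. In particular $\mathrm{red}_N|_{\tilde{L}^{np}}$ is a $\mathbb{Z}$-module morphism whose image is a subgroup of $(\mathbb{Z}/N\mathbb{Z})^2$. Because multiplication by $N$ kills $(\mathbb{Z}/N\mathbb{Z})^2$, any additive subgroup is automatically stable under the $\mathbb{Z}/N\mathbb{Z}$-action, so $\mathrm{red}_N(\tilde{L}^{np})$ is a $\mathbb{Z}/N\mathbb{Z}$-submodule. (Alternatively, one can see this directly: the $\mathbb{Z}$-action on $\tilde{L}^{np}$ descends to a $\mathbb{Z}/N\mathbb{Z}$-action on $\tilde{L}^{np}/(N L_1^{np})$ since $N\cdot\tilde{L}^{np}\subseteq N\cdot L_1^{np}$.)

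Next, the key point noted just before the statement is the inclusion $N\cdot L_1^{np}\leq \tilde{L}^{np}$. Consequently $\ker(\mathrm{red}_N|_{\tilde{L}^{np}})=\tilde{L}^{np}\cap N L_1^{np}=N\cdot L_1^{np}$, so the first isomorphism theorem gives
$$\mathrm{red}_N(\tilde{L}^{np})\ \simeq\ \tilde{L}^{np}/(N\cdot L_1^{np})\ .$$
Applying the third isomorphism theorem to $N\cdot L_1^{np}\leq\tilde{L}^{np}\leq L_1^{np}$ yields
$$[L_1^{np}:N\cdot L_1^{np}]=[L_1^{np}:\tilde{L}^{np}]\cdot[\tilde{L}^{np}:N\cdot L_1^{np}]\ .$$
Since $[L_1^{np}:N\cdot L_1^{np}]=N^2$ and $[L_1^{np}:\tilde{L}^{np}]=N$ by hypothesis, we conclude $[\tilde{L}^{np}:N\cdot L_1^{np}]=N$, which is exactly the cardinality of $\mathrm{red}_N(\tilde{L}^{np})$.

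There is no real obstacle here; the only thing to be careful about is to justify why $N\cdot L_1^{np}\leq\tilde{L}^{np}$, but this is precisely the observation recorded just above the proposition (every element of the finite quotient $L_1^{np}/\tilde{L}^{np}$ has order dividing its cardinality $N$, so $N\cdot L_1^{np}$ sits inside $\tilde{L}^{np}$). Once that inclusion is in hand, the argument is a purely formal application of the isomorphism theorems.
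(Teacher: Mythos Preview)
Your proof is correct and follows essentially the same approach as the paper's: both show that $\mathrm{red}_N$ respects the additive/module structure and then deduce the cardinality from the index hypothesis. Your version is somewhat cleaner in that you invoke the isomorphism theorems explicitly for the cardinality count, whereas the paper picks a basis to verify additivity by hand and then simply asserts that ``the index condition implies'' $|\mathrm{red}_N(\tilde{L}^{np})|=N$; but the underlying argument is the same.
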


\begin{proof}
Let $(f^1,f^2)$ be an oriented basis of $\tilde{L}^{np}$, i.e $\tilde{L}^{np}=\mathbb{Z}\cdot f^1+ \mathbb{Z}\cdot f^2$, where the coordinates of the $f^i$ ($i=1,2$) in the reference basis are $(f^i_1\ f^i_2)_{\mathrm{ref}}$. Let $P=p_1f^1+p_2f^2$ with $p_1,p_2\in\mathbb{Z}$. Then:

$$P=\left(p_1f^1_1\ +p_2f^2_1,\ p_1f^1_2\ +p_2f^2_2\right)_{\mathrm{ref}}$$

Hence $\mathrm{red}_N(P)=\left(\mathrm{red}_N(p_1f^1_1\ +p_2f^2_1),\mathrm{red}_N(\ p_1f^1_2\ +p_2f^2_2)\right)$.
Let now $P,Q\in \tilde{L}^{np}$. One readily sees that
$\mathrm{red}_N(P+Q)=\mathrm{red}_N(P)+\mathrm{red}_N(Q)$
and hence $\mathrm{red}_N(\tilde{L}^{np})$ is an abelian group. Moreover, the $\mathbb{Z}$-module structure on $\tilde{L}^{np}$ induces a $\mathbb{Z}/N\mathbb{Z}$-module structure on $\mathrm{red}_N(\tilde{L}^{np})$. The index condition implies that $\mathrm{red}_N(\tilde{L}^{np})$ has exactly $N$ elements. 
\end{proof}

\begin{definition}
The projective line $\mathbb{P}^1(\mathbb{Z}/N\mathbb{Z})$ is the set of $\mathbb{Z}/N\mathbb{Z}$-submodules of $(\mathbb{Z}/N\mathbb{Z})^2$ which are free and of rank $1$.
\end{definition}

\begin{proposition}
The relation $\sim$ on the pairs $(c,d)\in(\mathbb{Z}/N\mathbb{Z})^2$, such that $(c,d)\sim(c',d')$ if $(c',d')=l\cdot(c,d)$ for some $l\in(\mathbb{Z}/N\mathbb{Z})^\times$, is an equivalence relation. 
\end{proposition}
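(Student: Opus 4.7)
The plan is to verify the three axioms of an equivalence relation directly, using the group structure of the unit group $(\mathbb{Z}/N\mathbb{Z})^\times$. The definition of $\sim$ is tailored so that each check reduces to a one-line manipulation in this group, so the only ``content'' of the proof is the observation that $(\mathbb{Z}/N\mathbb{Z})^\times$ is indeed a group (closed under multiplication and inverses) with identity $1$.

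For reflexivity, I would take $l=1\in(\mathbb{Z}/N\mathbb{Z})^\times$ and note that $(c,d)=1\cdot(c,d)$, so $(c,d)\sim(c,d)$. For symmetry, suppose $(c',d')=l\cdot(c,d)$ for some $l\in(\mathbb{Z}/N\mathbb{Z})^\times$; since $l$ is a unit, its inverse $l^{-1}$ lies in $(\mathbb{Z}/N\mathbb{Z})^\times$, and multiplying both sides by $l^{-1}$ gives $(c,d)=l^{-1}\cdot(c',d')$, whence $(c',d')\sim(c,d)$. For transitivity, assume $(c',d')=l\cdot(c,d)$ and $(c'',d'')=m\cdot(c',d')$ with $l,m\in(\mathbb{Z}/N\mathbb{Z})^\times$; then $(c'',d'')=(ml)\cdot(c,d)$, and since $(\mathbb{Z}/N\mathbb{Z})^\times$ is closed under multiplication, $ml\in(\mathbb{Z}/N\mathbb{Z})^\times$, yielding $(c,d)\sim(c'',d'')$.

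There is no real obstacle here; the statement is a formal consequence of the fact that $(\mathbb{Z}/N\mathbb{Z})^\times$ acts on $(\mathbb{Z}/N\mathbb{Z})^2$ by scalar multiplication, and the $\sim$-classes are precisely the orbits of this action. One could even shorten the whole argument to the remark ``orbits of a group action always form a partition, hence define an equivalence relation,'' but writing out the three checks explicitly is cleaner and matches the style of the rest of the section.
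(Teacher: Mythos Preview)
Your proof is correct and complete. The paper in fact states this proposition without proof, presumably because it is considered elementary; your direct verification of reflexivity, symmetry, and transitivity via the group structure of $(\mathbb{Z}/N\mathbb{Z})^\times$ (equivalently, the observation that the $\sim$-classes are orbits of the scalar action) is exactly the intended argument.
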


The projective line $\mathbb{P}^1(\mathbb{Z}/N\mathbb{Z})$ can be equivalently defined as:

$$\{(a,b)\in(\mathbb{Z}/N\mathbb{Z})^2|(\mathbb{Z}/N\mathbb{Z})\cdot a+(\mathbb{Z}/N\mathbb{Z})\cdot b =(\mathbb{Z}/N\mathbb{Z})\}/\sim\ .$$ 

Let $[c:d]$ denote the equivalence class of $(c,d)$, modulo $\sim$. Then:

$$\mathbb{P}^1(\mathbb{Z}/N\mathbb{Z})\simeq\{[a:b]\subset(\mathbb{Z}/N\mathbb{Z})^2/\sim|(\mathbb{Z}/N\mathbb{Z})\cdot a+(\mathbb{Z}/N\mathbb{Z})\cdot b =(\mathbb{Z}/N\mathbb{Z})\}\ ,$$

which makes sense since the constraint in the bracket does not depend on the choice of representatives $(a,b)$ for each class $[a:b]$. If one represents $\mathbb{Z}/N\mathbb{Z}$ as $[|0,N-1|]$, the invertibles are:

$$(\mathbb{Z}/N\mathbb{Z})^\times\simeq \{a\in[|0,N-1|] | \gcd(a,N)=1\}\ .$$

\begin{proposition} The following bijection holds:
$$\mathbb{P}^1(\mathbb{Z}/N\mathbb{Z}) \simeq \{[c:d]|c,d\in[|0,N-1|]^2, \gcd(c,d,N)=1\}\ .$$
\end{proposition}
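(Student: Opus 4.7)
The plan is to unwind the module-theoretic description of $\mathbb{P}^1(\mathbb{Z}/N\mathbb{Z})$ established in the proposition just above, and translate the defining condition on a pair $(\bar a,\bar b)\in(\mathbb{Z}/N\mathbb{Z})^2$ into an arithmetic gcd condition on integer representatives, by means of B\'ezout's identity. Concretely, I represent $\mathbb{Z}/N\mathbb{Z}$ by $[|0,N-1|]$ and pick the representatives $c,d\in[|0,N-1|]$ of $\bar a,\bar b$.

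The key step is then the chain of equivalences
$$(\mathbb{Z}/N\mathbb{Z})\cdot\bar a +(\mathbb{Z}/N\mathbb{Z})\cdot\bar b =\mathbb{Z}/N\mathbb{Z} \iff \exists\, \bar u,\bar v\in\mathbb{Z}/N\mathbb{Z},\ \bar u\bar a+\bar v\bar b=\bar 1 \iff \exists\, u,v,k\in\mathbb{Z},\ uc+vd+kN=1,$$
the first being just the definition of a module being generated, the second coming from lifting a congruence $\equiv 1\pmod N$ to an equality in $\mathbb{Z}$. By the three-integer version of B\'ezout's identity, the last condition holds if and only if $\gcd(c,d,N)=1$. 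Combined with the equivalent description of $\mathbb{P}^1(\mathbb{Z}/N\mathbb{Z})$ recalled before the proposition, this identifies its elements with classes $[c:d]$ of pairs in $[|0,N-1|]^2$ satisfying $\gcd(c,d,N)=1$.

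Finally, I would check that the gcd condition is stable under the equivalence relation $\sim$, so that passing to classes $[c:d]$ is consistent: if $(\bar c',\bar d')=\bar l\,(\bar c,\bar d)$ with $\bar l\in(\mathbb{Z}/N\mathbb{Z})^\times$, then any integer lift $\tilde l$ of $\bar l$ is coprime to $N$, so reducing $(\tilde l c,\tilde l d)$ modulo $N$ to representatives in $[|0,N-1|]^2$ yields $(c',d')$ with $\gcd(c',d',N)=\gcd(\tilde l c,\tilde l d,N)=\gcd(c,d,N)$, the last equality using $\gcd(\tilde l,N)=1$. Thus the gcd condition descends to the quotient by $\sim$, giving the asserted bijection.

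There is no real obstacle here; the content is essentially an application of B\'ezout. The only point requiring care is bookkeeping: making sure that the identification between $\mathbb{Z}/N\mathbb{Z}$ and $[|0,N-1|]$ is used consistently on both sides, and that the gcd condition -- originally formulated for integer representatives -- is genuinely invariant under the equivalence $\sim$ defined on $(\mathbb{Z}/N\mathbb{Z})^2$.
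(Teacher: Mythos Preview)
Your proof is correct but takes a different route from the paper's. You work from the alternative description of $\mathbb{P}^1(\mathbb{Z}/N\mathbb{Z})$ stated just before the proposition---pairs $(a,b)$ with $(\mathbb{Z}/N\mathbb{Z})\cdot a + (\mathbb{Z}/N\mathbb{Z})\cdot b = \mathbb{Z}/N\mathbb{Z}$, modulo units---and translate this generating condition into $\gcd(c,d,N)=1$ via B\'ezout. The paper instead returns to the original definition (free rank-$1$ submodules of $(\mathbb{Z}/N\mathbb{Z})^2$) and argues through freeness: if $x=\gcd(c,d,N)>1$ then $(N/x)\cdot(c,d)=(0,0)$ with $N/x\neq 0$, so the cyclic module generated by $(c,d)$ has torsion and is not free; conversely if $\gcd(c,d,N)=1$ then any relation $a\cdot(c,d)=(0,0)$ forces $N\mid a$, so the module is free. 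Your approach is cleaner once the ``generating'' reformulation is granted, collapsing everything into a single B\'ezout computation; the paper's approach has the advantage of not leaning on that intermediate reformulation (which the paper asserts but does not prove) and of making the link with the module-theoretic definition explicit.
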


\begin{proof}
First note that the property $\gcd(c,d,N)=1$ is well-defined modulo $N$, and because invertibles of $\mathbb{Z}/N\mathbb{Z}=[|0,N-1|]$ are the integers coprime with $N$, one sees that it is in fact well defined on the equivalence classes $[c:d]$. Now, note that if $x=\gcd(c,d,N)>1$ then $N/x$ is non-zero and satisfies $(N/x)\cdot c =(N/x)\cdot d=0$, hence the module $\mathbb{Z}/N\mathbb{Z}\cdot(c,d)$ is not free. Thus there is a map:

$$\mathbb{P}^1(\mathbb{Z}/N\mathbb{Z}) \rightarrow \{[c:d]|c,d\in[|0,N-1|]^2, \gcd(c,d,N)=1\} \ .$$

Conversely, given any representative of $[c:d]$ with $c,d\in[|0,N-1|]$ and $\gcd(c,d,N)=1$, the module $\mathbb{Z}/N\mathbb{Z}\cdot(c,d)$ is free (otherwise there would be an $a\neq0$ such that $a\cdot(c,d)=(0,0)$, which would contradict $\gcd(c,d,N)=1$). These two maps are mutually inverse, and that concludes the proof.
\end{proof}

This result is classical and can for example be found as Proposition 2.4 in \cite{Manin_1972}.

\begin{remark}\label{rem:coprime}
Let $[c:d]$ be the equivalence class of a pair $(c,d)\in[|0,N-1|]^2$ such that $\gcd(c,d)=k$ and $\gcd(k,N)=1$ i.e. $k$ is invertible. Now, $\gcd(k^{-1}c,k^{-1}d)=1$, and $[c:d]=[k^{-1}c:k^{-1}d]$. Hence 
$$\mathbb{P}^1(\mathbb{Z}/N\mathbb{Z})\simeq\{[c:d]|c,d\in[|0,N-1|],\ \gcd(c,d)=1\}$$ 
The different representatives $(c',d')\in[|0,N-1|]^2$ of an equivalence class $[c:d]$ such that $\gcd(c,d)=1$ are exactly the bases of the free module which is the point in $\mathbb{P}^1(\mathbb{Z}/N\mathbb{Z})$ under consideration. 
\end{remark}

\begin{definition}
Let $L$ be a projective lattice in $\PLa_1$, $N$-hyperdistant from $L_1$. Among all the non-projective representatives of $L$ some are sub-groups of $L_1^{np}$. Let $L^\mathrm{np}$ be the one for which the index $[L_1^\mathrm{np}:L^\mathrm{np}]$ is minimal (hence equal to $N$ - see Appendix \ref{app}).
\end{definition}

\begin{proposition}
Let $L$ be a projective lattice in $\PLa_1$, $N$-hyperdistant from $L_1$. Then $\mathrm{red}_N(L^\mathrm{np})$ is a free, rank-$1$ sub-module of $(\mathbb{Z}/N\mathbb{Z})^2$. 
\end{proposition}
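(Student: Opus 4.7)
The plan is to deduce the statement from the fact that the quotient $L_1^{\mathrm{np}}/L^{\mathrm{np}}$ is cyclic of order $N$, a fact that itself rests on the minimality clause in the definition of $L^{\mathrm{np}}$. The previous proposition already furnishes $|\mathrm{red}_N(L^{\mathrm{np}})|=N$; what remains is to show this submodule is cyclic. Since $L_1^{\mathrm{np}}/L^{\mathrm{np}}$ has order $N$, Lagrange yields $NL_1^{\mathrm{np}}\subseteq L^{\mathrm{np}}$, producing a short exact sequence of finite $\mathbb{Z}/N\mathbb{Z}$-modules
$$0\longrightarrow \mathrm{red}_N(L^{\mathrm{np}})\longrightarrow L_1^{\mathrm{np}}/NL_1^{\mathrm{np}}\longrightarrow L_1^{\mathrm{np}}/L^{\mathrm{np}}\longrightarrow 0\ ,$$
in which the middle term is canonically identified with $(\mathbb{Z}/N\mathbb{Z})^2$ via the reference basis. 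Rank-$1$ freeness of $\mathrm{red}_N(L^{\mathrm{np}})$ is then equivalent to the right-hand quotient being cyclic of order $N$.

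To establish this cyclicity I would invoke the elementary divisor theorem: there exist a basis $(f_1,f_2)$ of $L_1^{\mathrm{np}}$ and positive integers $a\mid b$ with $ab=N$ such that $(af_1,bf_2)$ is a basis of $L^{\mathrm{np}}$. Suppose, toward a contradiction, that $a>1$. Then $L'^{\mathrm{np}}:=(1/a)L^{\mathrm{np}}=\mathbb{Z}f_1+(b/a)\mathbb{Z}f_2$ lies inside $L_1^{\mathrm{np}}$ (since $a\mid b$) and has index $b/a=N/a^2<N$. Because $L'^{\mathrm{np}}$ and $L^{\mathrm{np}}$ differ only by the rational scaling $1/a$, they represent the same projective lattice $L$, contradicting the defining minimality of $[L_1^{\mathrm{np}}:L^{\mathrm{np}}]=N$. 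Hence $a=1$ and $L^{\mathrm{np}}=\mathbb{Z}f_1+N\mathbb{Z}f_2$, which gives $L_1^{\mathrm{np}}/L^{\mathrm{np}}\simeq\mathbb{Z}/N\mathbb{Z}$.

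In the basis $(f_1,f_2)$, a general element $k_1f_1+k_2Nf_2$ of $L^{\mathrm{np}}$ reduces modulo $NL_1^{\mathrm{np}}$ to the class of $k_1f_1$, so $\mathrm{red}_N(L^{\mathrm{np}})=\mathbb{Z}/N\mathbb{Z}\cdot\overline{f_1}$; moreover $k\overline{f_1}=0$ forces $N\mid k$, so $\overline{f_1}$ has order exactly $N$. Thus $\mathrm{red}_N(L^{\mathrm{np}})\simeq\mathbb{Z}/N\mathbb{Z}$ as $\mathbb{Z}/N\mathbb{Z}$-modules, which is free of rank $1$. The main obstacle is the cyclicity of $L_1^{\mathrm{np}}/L^{\mathrm{np}}$, whose proof hinges on the minimality built into the definition of $L^{\mathrm{np}}$; once that is in hand, the rest is a direct application of the structure theorem.
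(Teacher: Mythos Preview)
Your proof is correct but follows a different route from the paper's. The paper works explicitly in the $L_{M,b}$ parametrisation of Appendix~\ref{app}: writing $L=L_{M,b}$ and letting $\alpha$ be the smallest positive integer with $\alpha M,\alpha b\in\mathbb{Z}$, it exhibits by a short case analysis (on whether $\alpha M=1$ and whether $\gcd(\alpha M,\alpha b)=1$) an explicit vector $v\in L^{\mathrm{np}}$ whose coordinates in the reference basis are coprime and lie in $[|0,N-1|]$. By Remark~\ref{rem:coprime} this vector generates a free rank-$1$ submodule, which by the cardinality count must be all of $\mathrm{red}_N(L^{\mathrm{np}})$.

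Your argument instead invokes the elementary divisor theorem to produce an adapted basis $(f_1,f_2)$ of $L_1^{\mathrm{np}}$ with $L^{\mathrm{np}}=\mathbb{Z}af_1+\mathbb{Z}bf_2$, $a\mid b$, $ab=N$, and then uses the minimality of $[L_1^{\mathrm{np}}:L^{\mathrm{np}}]$ to force $a=1$. This is a cleaner, more structural proof; as a bonus it yields $L_1^{\mathrm{np}}/L^{\mathrm{np}}\simeq\mathbb{Z}/N\mathbb{Z}$ immediately, whereas the paper establishes that isomorphism separately in the proposition following Theorem~\ref{thm1}. The paper's approach, on the other hand, has the advantage of producing the generator explicitly in reference-basis coordinates, which plugs directly into the homogeneous coordinates $[c:d]$ used to describe the bijection $\mathrm{P}\mathcal{L}_1^N\to\mathbb{P}^1(\mathbb{Z}/N\mathbb{Z})$ and the dessins in Section~\ref{sec2}.
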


\begin{proof}
We want to show that $\mathrm{red}(L^\mathrm{np})$ contains some point $(c,d)\in[|0,N-1|]^2$ with $\gcd(c,d)=1$.
As shown in Appendix \ref{app}, any projective lattice commensurable with $L_1$ is an $L_{M,b}$ for some $M\in\mathbb{Q}_+^*$ and $b\in\mathbb{Q}\cap[0,1[$.
Let $\alpha$ be the smallest strictly positive integer such that $\alpha M$ and $\alpha b$ are also integers. Hence $\gcd(\alpha M, \alpha b, \alpha)=1$, $\delta(L_{M,b},L_1)=N=\alpha^2M$, and $\alpha b\in[0,\alpha[\cap\mathbb{Z}$.
\begin{itemize}
\item If $\alpha M=1$, the point $v=(1\ \alpha b)_{\mathrm{ref}}\in L^\mathrm{np}$ works. 
\item If  $\alpha M>1$ and $\gcd(\alpha M,\alpha b)=1$, the point $v=(\alpha M\ \alpha b)_{\mathrm{ref}}\in L^\mathrm{np}$ works.
\item If $\alpha M>1$ and $\gcd(\alpha M,\alpha b)>1$, the point $v=(\alpha M\ \alpha(b+1))_{\mathrm{ref}}\in L^\mathrm{np}$ works. 
\end{itemize}
The coordinates of these $v$ are always in $[|0,N-1|]$, and coprime. By Remark \ref{rem:coprime}, the reduction modulo $N$ of the pair of the coordinates of $v$ in the reference basis is a basis of a free, rank-$1$ sub-module of $(\mathbb{Z}/N\mathbb{Z})^2$. 
\end{proof}

\begin{proposition}\label{Prop:mapinj}
The induced map $\mathrm{P}\mathcal{L}_1^N \rightarrow  \mathbb{P}^1(\mathbb{Z}/N\mathbb{Z}) $ is injective.
\end{proposition}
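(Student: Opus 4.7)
The plan is to lift the equality of images in $\mathbb{P}^1(\mathbb{Z}/N\mathbb{Z})$ to an equality of sublattices of $L_1^\mathrm{np}$, via the correspondence theorem for subgroups. Concretely, let $L, L' \in \mathrm{P}\mathcal{L}_1^N$ satisfy $\mathrm{red}_N(L^\mathrm{np}) = \mathrm{red}_N((L')^\mathrm{np})$ as free rank-$1$ submodules of $(\mathbb{Z}/N\mathbb{Z})^2$; I want to deduce $L = L'$.

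First I would recall the basic frame set up just before the previous proposition: the minimal-index non-projective representatives $L^\mathrm{np}, (L')^\mathrm{np}$ are genuine sublattices of $L_1^\mathrm{np}$ of index $N$, hence both contain $N \cdot L_1^\mathrm{np}$,
$$N \cdot L_1^\mathrm{np} \leq L^\mathrm{np},\ (L')^\mathrm{np} \leq L_1^\mathrm{np}.$$
This is the one structural input that makes the reduction map applicable on both sides.

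The key step is then the standard correspondence theorem applied to the abelian group (equivalently, $\mathbb{Z}$-module) quotient
$$L_1^\mathrm{np} \twoheadrightarrow L_1^\mathrm{np}/N\cdot L_1^\mathrm{np} \simeq (\mathbb{Z}/N\mathbb{Z})^2,$$
which coincides with $\mathrm{red}_N$. This quotient induces an inclusion-preserving bijection between subgroups of $L_1^\mathrm{np}$ containing $N\cdot L_1^\mathrm{np}$ and subgroups of $(\mathbb{Z}/N\mathbb{Z})^2$. Since $L^\mathrm{np}$ and $(L')^\mathrm{np}$ both lie in the range of this bijection and have the same image, they must be equal as sublattices of $L_1^\mathrm{np}$. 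Finally, the projective lattices $L$ and $L'$ are by construction the projective classes of $L^\mathrm{np}$ and $(L')^\mathrm{np}$, so $L^\mathrm{np} = (L')^\mathrm{np}$ as subsets of $V$ implies $L = L'$ in $\mathrm{P}\mathcal{L}_1^N$.

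I do not expect any serious obstacle: once the correspondence theorem is invoked, the statement reduces to a triviality. The only point to be careful about is that the map $L \mapsto L^\mathrm{np}$ itself is well-defined, i.e. that the minimal-index sublattice representative of a projective class is unique; this follows from the fact that the set of $\lambda \in \mathbb{Q}^*$ for which $\lambda L^\mathrm{np} \subseteq L_1^\mathrm{np}$ is discrete (generated, up to sign, by a single element), so the minimum of the index $[L_1^\mathrm{np}:\lambda L^\mathrm{np}]$ is attained at a unique representative. With this in place, the proof is essentially the two-line argument above.
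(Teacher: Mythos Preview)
Your argument is correct and is essentially the same as the paper's: both observe that $L^\mathrm{np}$ and $(L')^\mathrm{np}$ contain $N\cdot L_1^\mathrm{np}$ and have the same image in $L_1^\mathrm{np}/N\cdot L_1^\mathrm{np}\simeq(\mathbb{Z}/N\mathbb{Z})^2$, hence coincide. The only difference is presentational---you name the correspondence theorem explicitly, while the paper phrases the same step as ``they share the subgroup $N\cdot L_1^{np}$ and coincide on $L_1^{np}/(N\cdot L_1^{np})$''.
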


\begin{proof}
Consider two projective lattices $L,K\in\mathrm{P}\mathcal{L}_1^N$ mapped to the same class $[c:d]\in\mathbb{P}^1(\mathbb{Z}/N\mathbb{Z})$. The set of points in $L^\mathrm{np}$ with coordinates in $[|0,N-1|]$ coincide with the set of points of $K^\mathrm{np}$ with coordinates in $[|0,N-1|]$, hence $L^\mathrm{np}=K^\mathrm{np}$ (since they share the subgroup $N\cdot L_1^{np}$ and coincide on $L_1^{np}/(N\cdot L_1^{np})$), hence $K=L$.
\end{proof}

\begin{example}
The projective lattice $L_2$ (see Appendix \ref{app}) is $2$-hyperdistant from $L_1$, and 
$$L^\mathrm{np}_2=\mathbb{Z}\cdot(2\ 0)+\mathbb{Z}\cdot(0\ 1)$$ 
Even if $2\cdot (L^\mathrm{np}_2)$ is a sublattice of $L_1^{np}$ of index $8$, its projective class is still $L_2$. The reduction $\mathrm{red}_8(2\cdot(L^\mathrm{np}_2))$ is the following submodule of $(\mathbb{Z}/8\mathbb{Z})^2$:
\[
\{(0,0),(0,2),(0,4),(0,6),(4,0),(4,2),(4,4),(4,6)\} \ .
\]
which is obviously not free.
Figure \ref{f:egN=4} illustrates the relationship between $\mathcal{L}_1^4$ and the rank-$1$ free submodules of $\mathbb{Z}/4\mathbb{Z}$.
\end{example}

\begin{figure}[h!]
\centering
\begin{subfigure}{0.4\textwidth}
\centering
\begin{tikzpicture}
\draw(0,-0.1)--(0,4.1);
\draw(0.5,-0.1)--(0.5,4.1);
\draw(1,-0.1)--(1,4.1);
\draw(1.5,-0.1)--(1.5,4.1);
\draw(2,-0.1)--(2,4.1);
\draw(2.5,-0.1)--(2.5,4.1);
\draw(3,-0.1)--(3,4.1);
\draw(3.5,-0.1)--(3.5,4.1);
\draw(4,-0.1)--(4,4.1);
\draw(-0.1,0)--(4.1,0);
\draw(-0.1,0.5)--(4.1,0.5);
\draw(-0.1,1)--(4.1,1);
\draw(-0.1,1.5)--(4.1,1.5);
\draw(-0.1,2)--(4.1,2);
\draw(-0.1,2.5)--(4.1,2.5);
\draw(-0.1,3)--(4.1,3);
\draw(-0.1,3.5)--(4.1,3.5);
\draw(-0.1,4)--(4.1,4);
\draw[red] (0,0)--(4,2);
\draw[red] (0,1)--(4,3);
\draw[red] (0,2)--(4,4);
\draw[red] (0,3)--(2,4);
\draw[red] (2,0)--(4,1);
\draw (0,0) circle(0.1);
\draw (0,2) circle(0.1);
\draw (0,4) circle(0.1);
\draw (2,0) circle(0.1);
\draw (2,2) circle(0.1);
\draw (2,4) circle(0.1);
\draw (4,0) circle(0.1);
\draw (4,2) circle(0.1);
\draw (4,4) circle(0.1);
\draw[blue] (0,0)--(2,4);
\draw[blue] (1,0)--(3,4);
\draw[blue] (2,0)--(4,4);
\draw[blue] (3,0)--(4,2);
\draw[blue] (0,2)--(1,4);
\draw[green] (0,3)--(1,4);
\draw[green] (0,2)--(2,4);
\draw[green] (0,1)--(3,4);
\draw[green] (0,0)--(4,4);
\draw[green] (1,0)--(4,3);
\draw[green] (2,0)--(4,2);
\draw[green] (3,0)--(4,1);
\draw[fill=black] (0,0.5) circle(0.05);
\draw[fill=red] (0,1) circle(0.05);
\draw[fill=black] (0,1.5) circle(0.05);
\draw[fill=black] (0.5,0) circle(0.05);
\draw[fill=black] (0.5,0.5) circle(0.05);
\draw[fill=blue] (0.5,1) circle(0.05);
\draw[fill=green] (0.5,1.5) circle(0.05);
\draw[fill=blue] (1,0) circle(0.05);
\draw[fill=red] (1,0.5) circle(0.05);
\draw[fill=green] (1,1) circle(0.05);
\draw[fill=red] (1,1.5) circle(0.05);
\draw[fill=black] (1.5,0) circle(0.05);
\draw[fill=green] (1.5,0.5) circle(0.05);
\draw[fill=blue] (1.5,1) circle(0.05);
\draw[fill=black] (1.5,1.5) circle(0.05);
\end{tikzpicture}
\end{subfigure}
\begin{subfigure}{0.4\textwidth}
\centering
\begin{tikzpicture}
\draw (0,0) circle(0.1);
\draw[fill=black] (0,1) circle(0.1);
\draw[fill=red] (0,2) circle(0.1);
\draw[fill=black] (0,3) circle(0.1);
\draw[fill=black] (1,0) circle(0.1);
\draw[fill=black] (1,1) circle(0.1);
\draw[fill=blue] (1,2) circle(0.1);
\draw[fill=green] (1,3) circle(0.1);
\draw[fill=blue] (2,0) circle(0.1);
\draw[fill=red] (2,1) circle(0.1);
\draw[fill=green] (2,2) circle(0.1);
\draw[fill=red] (2,3) circle(0.1);
\draw[fill=black] (3,0) circle(0.1);
\draw[fill=green] (3,1) circle(0.1);
\draw[fill=blue] (3,2) circle(0.1);
\draw[fill=black] (3,3) circle(0.1);
\draw (-0.1,0)--(3.1,0);
\draw (0,-0.1)--(0,3.1);
\draw (-0.1,-0.1)--(3.1,3.1);
\end{tikzpicture}
\end{subfigure}
\caption{\sf{
The underlying black lattice on the left is $L_1^{np}$, and three of its sublattices of index $4$ are shown as the intersection points of $L_1^{np}$ with, respectively, the red, blue and green lines. On the right, one sees in the $(\mathbb{Z}/4\mathbb{Z})^2$-plane, six of its free submodules of rank $1$: the three lines in black, corresponding to the coordinates $[0:1]$, $[1:0]$ and $[1:1]$, as well as the red ([1:2]), green ([3:1]) and blue ([2:1]) lines which are the images of the corresponding sublattices on the left. Those six submodules are in fact all the free submodules of $(\mathbb{Z}/4\mathbb{Z})^2$ of rank $1$ (see next section).}
\label{f:egN=4}}
\end{figure}

\begin{proposition}
Let $c,d\in[|0,N-1|]$ be two coprime numbers. The free module corresponding to the class $[c:d]\in\mathbb{P}^1(\mathbb{Z}/N\mathbb{Z})$ defines a unique non-projective sublattice $L^\mathrm{np}_{[c:d]}$ of $L_1^{np}$ of index $N$. Its projectivisation $L_{[c:d]}$ is in $\PLa_1^N$, and the image of $L_{[c:d]}$ under the map of Proposition \ref{Prop:mapinj} is $[c:d]$.
\end{proposition}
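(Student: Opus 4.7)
The plan is to construct the lattice explicitly as a preimage under reduction, verify its algebraic properties (index, independence of representative), then check $N$-hyperdistance, and finally observe that the map from Proposition \ref{Prop:mapinj} recovers $[c:d]$ by construction.

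First, I would define
\[
L^{np}_{[c:d]} \;:=\; \mathbb{Z}\cdot (c\ d)_{\mathrm{ref}} \;+\; N\cdot L_1^{np}
\]
as a subgroup of $L_1^{np}$. Independence of the chosen coprime representative in $[|0,N-1|]^2$ is automatic: by Remark \ref{rem:coprime} any two such representatives of $[c:d]$ differ by multiplication by a unit of $\mathbb{Z}/N\mathbb{Z}$, and this difference is absorbed by the second summand $NL_1^{np}$. For the index, coprimality of $c$ and $d$ lets me pick $a,b\in\mathbb{Z}$ with $ad-bc=1$, so $\{(c,d),(a,b)\}$ is a $\mathbb{Z}$-basis of $L_1^{np}$. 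In that basis $N L_1^{np}=N\mathbb{Z}(c,d)\oplus N\mathbb{Z}(a,b)$, so $L^{np}_{[c:d]}=\mathbb{Z}(c,d)\oplus N\mathbb{Z}(a,b)$, whose index in $L_1^{np}$ is $N$.

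Next I would check that the projectivisation $L_{[c:d]}$ is $N$-hyperdistant from $L_1$, which is where coprimality does the real work. Every non-projective representative of $L_{[c:d]}$ sitting inside $L_1^{np}$ has the form $\lambda L^{np}_{[c:d]}$ for some $\lambda\in\mathbb{Q}^*$. Writing $\lambda=p/q$ in lowest terms, the condition $\lambda L^{np}_{[c:d]}\subseteq L_1^{np}$ forces $\lambda(c,d)\in L_1^{np}$, i.e. $q\mid c$ and $q\mid d$; coprimality gives $q=1$, so $\lambda=p$ is an integer and
\[
[L_1^{np}:\lambda L^{np}_{[c:d]}] \;=\; p^2\,N \;\geq\; N,
\]
with equality iff $p=\pm1$. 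Hence $N$ is the minimum index among non-projective sublattice representatives of $L_{[c:d]}$; by the definition of $\delta$ recalled in the excerpt, $\delta(L_{[c:d]},L_1)=N$, i.e. $L_{[c:d]}\in\PLa_1^N$, and the minimal representative is indeed $L^{np}_{[c:d]}$.

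Finally, the image of $L_{[c:d]}$ under the map of Proposition \ref{Prop:mapinj} is computed from this minimal representative $L^{np}_{[c:d]}$; by construction $\mathrm{red}_N(L^{np}_{[c:d]}) = (\mathbb{Z}/N\mathbb{Z})\cdot(c,d)$, which is precisely the free rank-one submodule labelled by $[c:d]$. The main obstacle is the minimality step: one must convert the purely combinatorial input $\gcd(c,d)=1$ into the geometric statement that no rational rescaling shrinks the index. Once the elementary observation "$\lambda\cdot(c,d)\in\mathbb{Z}^2$ together with $\gcd(c,d)=1$ forces $\lambda\in\mathbb{Z}$" is isolated, everything else is bookkeeping, and the existence half of the bijection $\PLa_1^N\simeq\mathbb{P}^1(\mathbb{Z}/N\mathbb{Z})$ is complete.
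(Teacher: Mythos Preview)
Your proof is correct and arrives at the same lattice as the paper does: writing $ad-bc=1$, the paper sets $L^{\mathrm{np}}_{[c:d]}=\SL_2(\mathbb{Z})\cdot\begin{pmatrix}Na&Nb\\c&d\end{pmatrix}$, which is exactly your $\mathbb{Z}(c,d)\oplus N\mathbb{Z}(a,b)=\mathbb{Z}(c,d)+NL_1^{np}$. The one genuine difference is in the hyperdistance step: the paper uses the $\mathrm{Pdet}$ formulation from Appendix~\ref{app} (since $ad-bc=1$ the scaling factor $\alpha$ is $1$, so $\delta=N$ directly), whereas you use the equivalent minimum-index characterisation and argue that $\lambda(c,d)\in\mathbb{Z}^2$ with $\gcd(c,d)=1$ forces $\lambda\in\mathbb{Z}$. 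Your route is slightly longer but more self-contained and makes the role of coprimality explicit; the paper's is a one-liner once the $\mathrm{Pdet}$ machinery is in hand. You also spell out independence of the coprime representative, which the paper leaves implicit.
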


\begin{proof}
Since $\gcd(c,d)=1$, there exist $a,b\in\mathbb{Z}$ such that $ad-bc=1$. Consider the map
$$[c:d]\rightarrow \L^\mathrm{np}_{[c:d]}=\SL_2(\mathbb{Z})\cdot\left(\begin{array}{cc} Na & Nb \\ c & d \end{array}\right)$$
The lattice $L^\mathrm{np}_{[c:d]}$ is obviously a sublattice of $L_1^{np}$ of index $N$. Now since $ad-bc=1$, the minimal $\alpha\in\mathbb{Q}_{>0}^*$ such that $\alpha a$, $\alpha b$, $\alpha c$ and $\alpha d$ are integers is $1$, hence the projectivisation $L_{[c:d]}$ of $\L^\mathrm{np}_{[c:d]}$ is $N$-hyperdistant from $L_1$. It is easy to see that the map above is the reciprocal of the one of Proposition \ref{Prop:mapinj}. 
\end{proof}

We have proved the following. 

\begin{theorem}\label{thm1}
The set $\PLa_1^N$ is in bijection with the projective line $\mathbb{P}^1(\mathbb{Z}/N\mathbb{Z})$. Moreover, if $L\in\PLa_1^N$ corresponds to some $[c:d]\in\mathbb{P}^1(\mathbb{Z}/N\mathbb{Z})$ with $c,d\in[|0,N-1|]$ coprime numbers, the class of $L$ in $\psl\backslash\PGL_2^+(\mathbb{Q})$ is:
$$\psl\cdot \left(\begin{array}{cc} Na & Nb \\ c & d \end{array}\right)$$
for some $a,b,c,d\in\mathbb{Z}$ such that $ad-bc=1$.
\end{theorem}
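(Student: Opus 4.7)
The plan is to assemble Theorem \ref{thm1} from the two propositions immediately preceding it. Proposition \ref{Prop:mapinj} already provides a well-defined injection
\[
\Phi:\PLa_1^N\longrightarrow\mathbb{P}^1(\mathbb{Z}/N\mathbb{Z}),\qquad L\longmapsto \mathrm{red}_N(L^{\mathrm{np}}),
\]
once one invokes the earlier computation that $\mathrm{red}_N(L^{\mathrm{np}})$ is a free rank-$1$ submodule. Hence to finish it suffices to produce an inverse.

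I would define the inverse $\Psi:\mathbb{P}^1(\mathbb{Z}/N\mathbb{Z})\to\PLa_1^N$ as follows. Given $[c:d]$, pick (by Remark \ref{rem:coprime}) a representative with $c,d\in[|0,N-1|]$ and $\gcd(c,d)=1$, use B\'ezout to choose $a,b\in\mathbb{Z}$ with $ad-bc=1$, and set $\Psi([c:d])$ to be the projective class of the non-projective lattice
\[
L^{\mathrm{np}}_{[c:d]}\;=\;\SL_2(\mathbb{Z})\cdot\begin{pmatrix} Na & Nb \\ c & d \end{pmatrix}.
\]
Well-definedness requires two checks: (i) independence of $(a,b)$: any other valid pair has the form $(a+kc,b+kd)$ for some $k\in\mathbb{Z}$, amounting to left multiplication of the matrix by $\left(\begin{smallmatrix} 1 & k \\ 0 & 1 \end{smallmatrix}\right)\in\SL_2(\mathbb{Z})$, which preserves the coset; (ii) independence of the coprime representative of $[c:d]$, which follows from the scaling invariance built into the projectivisation. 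The image lies in $\PLa_1^N$ because the determinant of the defining matrix is $N$ (so the non-projective lattice has index $N$ in $L_1^{\mathrm{np}}$), and this index cannot drop after projective rescaling since $\gcd(Na,Nb,c,d)=\gcd(c,d)=1$, giving $\delta(L_{[c:d]},L_1)=N$.

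Next I would verify $\Phi\circ\Psi=\mathrm{id}$ directly: the two rows of the matrix defining $\Psi([c:d])$ reduce modulo $N$ to $(0,0)$ and $(c,d)$, so the free rank-$1$ submodule they span is exactly $[c:d]$. The identity $\Psi\circ\Phi=\mathrm{id}$ then follows from the injectivity of $\Phi$ given in Proposition \ref{Prop:mapinj}, since $\Phi(\Psi(\Phi(L)))=\Phi(L)$ forces $\Psi(\Phi(L))=L$. The ``moreover'' clause of Theorem \ref{thm1} is simply a readoff of the explicit matrix appearing in the construction of $\Psi$.

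The main obstacle is not any single hard step but the bookkeeping across the two degrees of freedom when specifying $\Psi$: the representative $(c,d)$ of $[c:d]$ and the auxiliary pair $(a,b)$ with $ad-bc=1$. Once one checks that the output class in $\psl\backslash\PGL_2^+(\mathbb{Q})$ is insensitive to both choices, everything else is a direct verification, and the theorem is an immediate consequence.
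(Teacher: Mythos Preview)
Your proposal is correct and follows essentially the same route as the paper: the theorem is assembled from the injectivity of $\Phi$ (Proposition~\ref{Prop:mapinj}) together with the explicit inverse $\Psi([c:d])=\psl\cdot\left(\begin{smallmatrix} Na & Nb \\ c & d \end{smallmatrix}\right)$, and the ``moreover'' clause is read off from $\Psi$. One small slip: in your check~(i), the transition matrix relating the two choices of $(a,b)$ is $\left(\begin{smallmatrix} 1 & Nk \\ 0 & 1 \end{smallmatrix}\right)$ rather than $\left(\begin{smallmatrix} 1 & k \\ 0 & 1 \end{smallmatrix}\right)$, though of course this is still in $\SL_2(\mathbb{Z})$ and the conclusion stands.
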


\begin{proposition}
Let $L$ be a projective lattice $N$-hyperdistant from $L_1$. Then
$$(L^\mathrm{np})/(N\cdot L_1^\mathrm{np})\simeq(\mathbb{Z}/N\mathbb{Z})\simeq L_1^\mathrm{np}/(L^\mathrm{np})\ .$$
\end{proposition}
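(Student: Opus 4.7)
The plan is to leverage Theorem~\ref{thm1}, which supplies the explicit description $L^{\mathrm{np}} = \mathbb{Z}\cdot(Na,Nb) + \mathbb{Z}\cdot(c,d)$ for suitable $a,b,c,d\in\mathbb{Z}$ satisfying $ad-bc=1$. Both isomorphisms can be read off from this basis, and I would treat them in turn.

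For the first isomorphism $L^{\mathrm{np}}/(N\cdot L_1^{\mathrm{np}})\simeq\mathbb{Z}/N\mathbb{Z}$, I begin by checking the inclusion $N\cdot L_1^{\mathrm{np}}\subseteq L^{\mathrm{np}}$. One can see this either abstractly (since $[L_1^{\mathrm{np}}:L^{\mathrm{np}}]=N$, multiplication by $N$ kills the quotient) or explicitly via the identities $d\cdot(Na,Nb)-Nb\cdot(c,d)=(N(ad-bc),0)=(N,0)$ and $-c\cdot(Na,Nb)+Na\cdot(c,d)=(0,N)$. Once this inclusion is in place, the quotient $L^{\mathrm{np}}/(N\cdot L_1^{\mathrm{np}})$ is exactly the image $\mathrm{red}_N(L^{\mathrm{np}})$, since the kernel of $\mathrm{red}_N$ restricted to $L^{\mathrm{np}}$ is $N\cdot L_1^{\mathrm{np}}\cap L^{\mathrm{np}}=N\cdot L_1^{\mathrm{np}}$. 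But the proposition just preceding Proposition~\ref{Prop:mapinj} has already identified this image as a free rank-$1$ submodule of $(\mathbb{Z}/N\mathbb{Z})^2$, hence isomorphic to $\mathbb{Z}/N\mathbb{Z}$, and the first isomorphism is done.

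For the second isomorphism $L_1^{\mathrm{np}}/L^{\mathrm{np}}\simeq\mathbb{Z}/N\mathbb{Z}$, the quotient has order $N$ by definition of the hyperdistance, so the only substantive thing to establish is cyclicity. I would invoke the Smith normal form of the matrix $\begin{pmatrix} Na & Nb \\ c & d \end{pmatrix}$: its invariant factors $d_1\mid d_2$ satisfy $d_1 d_2=\det=N$, with $d_1$ equal to the $\gcd$ of all matrix entries. The unimodularity condition $ad-bc=1$ forces $\gcd(a,b,c,d)=1$ by Bezout, and in particular $\gcd(Na,Nb,c,d)=1$; therefore $d_1=1$, $d_2=N$, and the quotient is cyclic of order $N$ as claimed.

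The only mildly delicate step is this cyclicity argument, and it collapses immediately once one notes that Theorem~\ref{thm1} has the condition $ad-bc=1$ baked into the chosen representative. Everything else is a direct unpacking of Theorem~\ref{thm1} together with the preceding proposition on the freeness and rank of $\mathrm{red}_N(L^{\mathrm{np}})$.
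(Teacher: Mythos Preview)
Your proof is correct, and the underlying engine is the same Bezout relation $ad-bc=1$ that the paper uses, but you package the two halves rather differently. For $L^{\mathrm{np}}/(N\cdot L_1^{\mathrm{np}})$, the paper picks the coprime vector in $L^{\mathrm{np}}$ and checks by hand (via $N\mid ka$, $N\mid kb\Rightarrow N\mid k(ad-bc)=k$) that it has order exactly $N$ and generates the quotient; you instead observe that this quotient is literally $\mathrm{red}_N(L^{\mathrm{np}})$ and invoke the earlier freeness proposition, which is a cleaner reuse of work already done. For $L_1^{\mathrm{np}}/L^{\mathrm{np}}$, the paper exhibits an explicit generator (the Bezout complement $(c,d)$ of the coprime vector) and again checks its order by a direct divisibility argument; you go through the Smith normal form, noting that $ad-bc=1$ forces $\gcd(c,d)=1$ and hence the first invariant factor is $1$. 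Your route is more structural and arguably scales better, while the paper's is fully elementary and produces concrete generators for both quotients as a by-product.
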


\begin{proof}
We have shown that there are $a,b\in[|0,N-1|]$ coprimes, such that $(a\ b)\in L^\mathrm{np}$. Let $c,d\in\mathbb{Z}$ such that $ad-bc=1$.
\begin{enumerate}

\item Let $k\in\mathbb{Z}$ such that $k\cdot (a\ b)\in(N\cdot L_1^\mathrm{np})$. Then $N|ka$ and $N|kb$ hence $N|k(ad-bc)$ which proves 

$$(\L^\mathrm{np})/(N\cdot L_1^\mathrm{np})\simeq(\mathbb{Z}/N\mathbb{Z}) \ .$$

Now let us take $p\in L$. Then $\mathrm{red}_N(p)=m\cdot (a\ b)$ for some $m\in\mathbb{Z}/N\mathbb{Z}$ hence $L=\mathbb{Z}(a\ b)+N\cdot L_1$.
\item Consider the vector $(c\ d)\in L_1^\mathrm{np}$, and $k\in\mathbb{Z}$ such that $k\cdot (c\ d)\in L^\mathrm{np}$, that is, $k(c\ d)=k'(a\ b)+(l_1N,l_2N)$ for $k',l_1,l_2\in\mathbb{Z}$. Then $N|(kc-k'a)$ and $N|(kd-k'b)$ hence $N$ divides $-b(kc-k'a)+a(kd-k'd)=k(ad-bc)=k$, and thus:

$$(\mathbb{Z}/N\mathbb{Z})\simeq L_1^\mathrm{np}/(\L^\mathrm{np}) \ .$$

\end{enumerate}
\end{proof}

%%%%%%%%%%%%%%%%%%%%%%%%%%%%
\subsection{The bijection $\mathrm{P}\mathcal{L}_1^N\simeq \He(N)\backslash\He(1)$}
\begin{proposition}
Let $N\in\mathbb{N}_{>0}$. The right-action of $\He(1)$ on $\PLa_1$ fixes the set $\PLa_1^N$. Moreover, the projective lattice $L_N\cdot M$ depends solely on the class of $M$ in $\He(N)\backslash\He(1)$.
\end{proposition}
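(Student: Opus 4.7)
The statement has two parts and both should follow directly from the general framework set up in the nomenclature section (and detailed in Appendix \ref{app}), so the plan is essentially to unpack definitions and use invariance of the hyperdistance under the $G = \PGL_2^+(\mathbb{Q})$-action.

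For the first assertion, I would start from the fact recalled at the start of the paper that the right action of $G$ on $\PLa_1$ preserves the hyperdistance $\delta$. Since $\He(1) = \psl = \mathrm{Stab}_G(L_1)$ sits inside $G$, for any $M \in \He(1)$ and any $L \in \PLa_1$ we have
\[
\delta(L \cdot M, L_1) = \delta(L \cdot M, L_1 \cdot M) = \delta(L, L_1).
\]
Taking $L \in \PLa_1^N$, i.e.\ $\delta(L,L_1) = N$, gives $\delta(L\cdot M, L_1) = N$, so $L\cdot M \in \PLa_1^N$. This shows that the action of $\He(1)$ restricts to an action on $\PLa_1^N$.

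For the second assertion, I would use the definition $\He(N) = \mathrm{Stab}_G(L_1, L_N)$, which in particular means every $h \in \He(N)$ satisfies $L_N \cdot h = L_N$. Suppose $M, M' \in \He(1)$ represent the same class in $\He(N)\backslash\He(1)$, so that $M' = h M$ for some $h \in \He(N)$. Then, using associativity of the right action,
\[
L_N \cdot M' = L_N \cdot (h M) = (L_N \cdot h) \cdot M = L_N \cdot M,
\]
which is exactly the claim that $L_N \cdot M$ only depends on the coset $\He(N) M$.

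Neither step is really an obstacle; the proof amounts to combining (i) the invariance of $\delta$ under $G$ with (ii) the two stabiliser descriptions $\He(1) = \mathrm{Stab}_G(L_1)$ and $\He(N) = \mathrm{Stab}_G(L_1,L_N)$. The only thing worth highlighting is that the first part is what makes the second part meaningful: it guarantees that the well-defined map $\He(N)\backslash\He(1) \to \PLa_1$, $[M] \mapsto L_N \cdot M$, actually takes values in $\PLa_1^N$, preparing the ground for the bijection $\PLa_1^N \simeq \He(N)\backslash\He(1)$ that the section is aiming at.
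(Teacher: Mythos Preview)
Your proof is correct and uses the same ingredients as the paper: invariance of $\delta$ under the $G$-action combined with the stabiliser descriptions $\He(1)=\mathrm{Stab}_G(L_1)$ and $\He(N)=\mathrm{Stab}_G(L_1,L_N)$.

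One small point of comparison worth noting: for the second part you prove the direction that the statement literally asks for, namely that $M'=hM$ with $h\in\He(N)$ forces $L_N\cdot M'=L_N\cdot M$ (well-definedness of the map on cosets). The paper's own proof actually argues the \emph{converse}: it assumes $L_N\cdot M'=L_N\cdot M$ and deduces $M'M^{-1}\in\He(N)$. That direction is injectivity of the induced map $\He(N)\backslash\He(1)\to\PLa_1^N$, which is what is really needed for the bijection established in the next proposition. Both implications are immediate from the stabiliser definition, so there is no mathematical gap either way; but if you want your argument to feed directly into the bijection $\PLa_1^N\simeq\He(N)\backslash\He(1)$, it is worth recording the converse implication as well.
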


\begin{proof}
The projective determinant is invariant under the right-action of $\psl=\He(1)$, hence $\delta(L_N\cdot M,L_1\cdot M)=\delta(L_N\cdot M,L_1)=\delta(L_N,L_1)=N$.
Let $M,M'\in\He(1)$ such that $L_N\cdot M'=L_N\cdot M$. Then

$$L_N\cdot M'M^{-1}=L_N \ ,$$

hence by definition of $\He(N)$, $M'M^{-1}\in\He(N)$, that is, $M'=AM$ with $A\in\He(N)$.

\end{proof}

The cardinality of $\mathrm{P}\mathcal{L}_1^N$ is thus an upper-bound for the index of $\He(N)$ in $\He(1)$, hence Theorem \ref{thm1} implies that $[\He(1):\He(N)]<\infty$ for all $N\in\mathbb{N}_{>0}$.
Let $\{\beta_i\}_{i\in I}$ be a set of representatives for the elements of $\He(N)\backslash\He(1)$ (one for each class). Then
$$\He(1)=\bigcup_{i\in I} \He(N)\cdot\beta_i$$

\begin{remark}\label{rem5}
Let $c,d\in[|0,N-1|]$ be two coprime numbers. Theorem \ref{thm1} shows that:
\begin{equation}
[c:d]\cdot M=[c:d]\cdot\left(\begin{array}{cc} k & l \\ m & n \end {array}\right)=\left(\begin{array}{cc} Na & Nb \\ c & d \end {array}\right)\left(\begin{array}{cc} k & l \\ m & n \end {array}\right)
\end{equation}
\begin{equation}
[c:d]\cdot M=\left(\begin{array}{cc} N(ak+bm) & N(al+bn) \\ ck+dm & cl+dn \end {array}\right)=[ck+dm:cl+dn]
\end{equation}
which yields a very explicit formula for the action of $\He(1)$ on $\mathrm{P}\mathcal{L}_1^N$.
\end{remark}

\begin{proposition}
The right-action of $\He(1)$ on $\mathrm{P}\mathcal{L}_1^N$ is transitive, and the bijection:

$$\mathrm{P}\mathcal{L}_1^N\simeq\He(N)\backslash\He(1)$$

holds. Note that the projective lattice where $[0:1]\in\mathrm{P}\mathcal{L}_1^N$ corresponds to the class $\He(N)\cdot 1$.
\end{proposition}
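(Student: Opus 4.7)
The plan is to deduce both claims from the explicit formula of Remark \ref{rem5} combined with the bijection of Theorem \ref{thm1}, so that transitivity reduces to a classical Bezout argument and the bijection then follows from a stabiliser computation already implicit in the previous proposition.

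First, I would identify the reference point. Using Theorem \ref{thm1}, the lattice $L_N$ corresponds to $[0:1]\in\mathbb{P}^1(\mathbb{Z}/N\mathbb{Z})$, since the non-projective representative of $L_N$ with minimal index in $L_1^{np}$ is $\mathbb{Z}\cdot(N\ 0)+\mathbb{Z}\cdot(0\ 1)$, and its reduction modulo $N$ is the free rank-one submodule generated by $(0,1)$. Specialising the formula of Remark \ref{rem5} at $[c:d]=[0:1]$ and $M=\left(\begin{smallmatrix} k & l \\ m & n\end{smallmatrix}\right)\in\He(1)$ yields $[0:1]\cdot M=[m:n]$, so the orbit of $[0:1]$ under $\He(1)$ is the set of classes $[m:n]$ arising as the bottom row of a matrix in $\mathrm{SL}_2(\mathbb{Z})$ reduced modulo $N$.

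Next, I would establish transitivity. Given any $[c:d]\in\mathbb{P}^1(\mathbb{Z}/N\mathbb{Z})$, by Remark \ref{rem:coprime} we may choose representatives $c,d\in[|0,N-1|]$ with $\gcd(c,d)=1$. Bezout then provides integers $k,l\in\mathbb{Z}$ with $kd-lc=1$, and the matrix $M=\left(\begin{smallmatrix} k & l \\ c & d\end{smallmatrix}\right)$ lies in $\mathrm{SL}_2(\mathbb{Z})=\He(1)$ and sends $[0:1]$ to $[c:d]$. Since the right action of $\He(1)$ preserves $\PLa_1^N$ (first statement of the preceding proposition) and every point of $\PLa_1^N$ is reached from $L_N$, the action is transitive.

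For the bijection, consider the orbit map $\varphi:\He(1)\to\PLa_1^N$, $M\mapsto L_N\cdot M$. By the preceding proposition, $\varphi$ factors through $\He(N)\backslash\He(1)$; transitivity just proved makes the induced map surjective. For injectivity, if $L_N\cdot M=L_N\cdot M'$ then $M'M^{-1}\in\He(1)\cap\mathrm{Stab}_G(L_N)$, which by definition of $\He(N)=\mathrm{Stab}_G(L_1,L_N)$ is exactly $\He(N)$, so $\He(N)\cdot M=\He(N)\cdot M'$. Hence
\[
\He(N)\backslash\He(1)\;\simeq\;\PLa_1^N.
\]
The final assertion is then immediate: $L_N=L_N\cdot 1$ is the image of the identity coset $\He(N)\cdot 1$, and by the identification above $L_N$ corresponds to $[0:1]$.

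The only genuine content is the transitivity step, which is essentially Bezout; the rest is a packaging of results already proved. No serious obstacle is expected.
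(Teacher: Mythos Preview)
Your proof is correct and follows essentially the same approach as the paper: both use Bezout to produce a matrix in $\He(1)$ sending $[0:1]$ to an arbitrary $[c:d]$ for transitivity, and then invoke the stabiliser identification $\mathrm{Stab}_{\He(1)}(L_N)=\He(N)$ (which the paper records in one line and you unpack via the orbit map) to obtain the bijection. The paper's proof is simply a terser version of yours.
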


\begin{proof}
Since for all $c,d\in[|0,N-1|]$ such that $\gcd(c,d)=1$, there exists $a,b\in\mathbb{Z}$ such that $ad-bc=1$, and since
$[c:d]=[0:1]\cdot\left(\begin{array}{cc} a & b \\ c & d \end {array}\right)$,
the action is transitive. By definition, $\mathrm{Stab}_{\He(1)}([0:1])=\He(N)$.
\end{proof}

%%%%%%%%%%%%%%%%%%%%%%%%%%%%%%%%%
\subsection{Sets of representatives for $\mathbb{P}^1(\mathbb{Z}/N\mathbb{Z})$}

Let us wrap-up what we have done, and show how to assign a single element in $[|0,N-1|]^2$ to a rank-$1$ free $\mathbb{Z}/N\mathbb{Z}$-submodule of $(\mathbb{Z}/N\mathbb{Z})^2$. 

Let $a,b\in[|0,N-1|]$ such that $\gcd(a,b)=1$. To the class $[a:b]$ corresponds a rank-$1$ free $\mathbb{Z}/N\mathbb{Z}$-submodule of $(\mathbb{Z}/N\mathbb{Z})^2$, but this map is many-to-one in general. The possible bases of this module are indeed the elements of the orbit:
\begin{equation}
(\mathbb{Z}/N\mathbb{Z})^\times\cdot(a,b) \ .
\end{equation} 

One may agree on some conventions to choose one representative for each class. One way to do it is as follows.
\begin{itemize}
\item Consider the set $\tilde{D}$ of orbits of the action of $(\mathbb{Z}/N\mathbb{Z})^\times$ on $\mathbb{Z}/N\mathbb{Z}\simeq[|0,N-1|]$. Let also $D$ be the set containing the smallest element of each orbit.
\item For each $d\in D$, consider the stabiliser $G=\mathrm{Stab}_{(\mathbb{Z}/N\mathbb{Z})^\times}\{d\}$. Let $\tilde{C_d}$ be the set of orbits of the action of $G$ on the set of elements in $\mathbb{Z}/N\mathbb{Z}$ which are coprime with $d$. Let $C_d$ be the set containing the smallest element of each orbit.
\end{itemize}

For all $a,b\in[|0,N-1|]$, there is a pair $(c,d)\in C_d\times D$ in the $(\mathbb{Z}/N\mathbb{Z})^\times$-orbit of $(a,b)$. 

\begin{example}
	Let $N=6$, in which case:
	
	$$(\mathbb{Z}/6\mathbb{Z})^\times=\{1,5\}\ .$$
	
	Its orbits when acting on $\mathbb{Z}/6\mathbb{Z}$ are
	
	$$\{\{0\},\{1,5\},\{2,4\},\{3\}\} \ ,$$
	
	thus $D=\{0,1,2,3\}$, and 
	
	$$\left\{\begin{array}{l}
	\mathrm{Stab}_{(\mathbb{Z}/6\mathbb{Z})^\times}\{0\}=(\mathbb{Z}/6\mathbb{Z})^\times\\
	\mathrm{Stab}_{(\mathbb{Z}/6\mathbb{Z})^\times}\{1\}=\{1\}\\
	\mathrm{Stab}_{(\mathbb{Z}/6\mathbb{Z})^\times}\{2\}=\{1\}\\
	\mathrm{Stab}_{(\mathbb{Z}/6\mathbb{Z})^\times}\{3\}=(\mathbb{Z}/6\mathbb{Z})^\times
	\end{array}\right. \ , $$
	
	hence
	
	$$\left\{\begin{array}{l}
	C_0=\{1\}\\
	C_1=\{0,1,2,3,4,5\}\\
	C_2=\{1,3,5\}\\
	C_3=\{1,2\}\\
	\end{array}\right. \ .$$
	
	A set of representatives for $\mathbb{P}^1(\mathbb{Z}/6\mathbb{Z})$ is:
	$$\{(1,0),(0,1),(1,1),(2,1),(3,1),(4,1),(5,1),(1,2),(3,2),(5,2),(1,3),(2,3)\}\subset(\mathbb{Z}/N\mathbb{Z})^2$$
	
	From this we know that the set $\mathrm{P}\mathcal{L}_1^6$ is exactly:
	
	$$\mathrm{P}\mathcal{L}_1^6=\{L_{1/6};L_{6};L_{1/6,1/6};L_{2/3,1/3};L_{3/2,1/2};L_{2/3,2/3};L_{1/6,5/6};$$
	$$L_{1/6,1/3};L_{3/2};L_{1/6,2/3};L_{1/6,1/2};L_{2/3}\}\ ,$$
	
	where:
	$$\left.\begin{array}{ll}
	L_{1/6}=\psl\cdot\left(\begin{array}{cc} 0 & -6 \\ 1 & 0 \end{array}\right), & 
	L_{6}=\psl\cdot\left(\begin{array}{cc} 6 & 0 \\ 0 & 1 \end{array}\right), \\
	L_{1/6,1/6}=\psl\cdot\left(\begin{array}{cc} 0 & -6 \\ 1 & 1 \end{array}\right), &
	L_{2/3,1/3}=\psl\cdot\left(\begin{array}{cc} 6 & 0 \\ 2 & 1 \end{array}\right), \\
	L_{3/2,1/2}=\psl\cdot\left(\begin{array}{cc} 6 & 0 \\ 3 & 1 \end{array}\right), & 
	L_{2/3,2/3}=\psl\cdot\left(\begin{array}{cc} 6 & 0 \\ 4 & 1 \end{array}\right), \\ 
	L_{1/6,5/6}=\psl\cdot\left(\begin{array}{cc} 6 & 0 \\ 5 & 1 \end{array}\right), &
	L_{1/6,1/3}=\psl\cdot\left(\begin{array}{cc} 0 & -6 \\ 1 & 2 \end{array}\right), \\ 
	L_{3/2}=\psl\cdot\left(\begin{array}{cc} -6 & -6 \\ 3 & 2 \end{array}\right), & 
	L_{1/6,2/3}=\psl\cdot\left(\begin{array}{cc} -12 & -6 \\ 5 & 2 \end{array}\right), \\ 
	L_{1/6,1/2}=\psl\cdot\left(\begin{array}{cc} 0 & -6 \\ 1 & 3 \end{array}\right), & 
	L_{2/3}=\psl\cdot\left(\begin{array}{cc} 6 & 6 \\ 2 & 3 \end{array}\right) \ .
	\end{array}\right.$$
	
\end{example}

Of course the procedure we are following here is a pure convention, and one is free to choose any other representatives one likes more. The next proposition shows for instance a set of representatives for the elements of $\mathbb{P}^1(\mathbb{Z}/N\mathbb{Z})$ in the case $N=p^\alpha$ with $p$ prime, which does not coincide with the one one would obtain with the recipe of above.

\begin{proposition}\label{prop:palpha}
When $N=p^\alpha$ with $p$ prime and $\alpha\in\mathbb{N}_{>0}$, a set of representatives in $[|0,p^\alpha-1|]^2$ for $\mathbb{P}^1(\mathbb{Z}/p^\alpha\mathbb{Z})$ is given by
\[
\left\{\begin{array}{rcl}(a,1) & & a=0,1,2...,p^\alpha-1 \ ; \\ (1,b) & &  b=pk,\ k\in[|0,p^{\alpha-1}-1|] \ . \end{array}\right.
\]
Hence $|\mathbb{P}^1(\mathbb{Z}/p^\alpha\mathbb{Z})|=(p+1)p^{\alpha-1}$.
\end{proposition}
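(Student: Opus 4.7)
The plan is to partition $\mathbb{P}^1(\mathbb{Z}/p^\alpha\mathbb{Z})$ according to whether the second coordinate of a representative is a unit. Since $p$ is prime, the unit group $(\mathbb{Z}/p^\alpha\mathbb{Z})^\times$ consists precisely of the residues not divisible by $p$, i.e. non-units are exactly the multiples of $p$. The first observation is that for any class $[c:d]\in\mathbb{P}^1(\mathbb{Z}/p^\alpha\mathbb{Z})$ the coprimality condition $\gcd(c,d,p^\alpha)=1$ forces at least one of $c,d$ to be a unit, since otherwise both would lie in the ideal $(p)$.

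I would then split into two complementary cases. If $d$ is a unit, multiplying the pair $(c,d)$ by $d^{-1}\in(\mathbb{Z}/p^\alpha\mathbb{Z})^\times$ gives $[c:d]=[cd^{-1}:1]$, yielding a representative $(a,1)$ with $a\in[|0,p^\alpha-1|]$; this produces $p^\alpha$ candidates. If on the other hand $d$ is not a unit, so $p\mid d$, then $c$ must be a unit, and multiplication by $c^{-1}$ yields $[c:d]=[1:c^{-1}d]$. Since the ideal $(p)$ is preserved by multiplication by units, $c^{-1}d$ is still divisible by $p$, so it has the form $pk$ with $k\in[|0,p^{\alpha-1}-1|]$; this produces $p^{\alpha-1}$ candidates.

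The last step is to check disjointness between the two families and uniqueness within each. Disjointness is immediate: in a class $[a:1]$ the second coordinate is a unit for every representative (multiplication by units preserves the property of being a unit), whereas in a class $[1:pk]$ the second coordinate is always a non-unit. Uniqueness within each family follows from the equation $(a',1)=u\cdot(a,1)$ forcing $u\cdot 1=1$, hence $u=1$ and $a=a'$; the same argument handles $(1,pk)\sim(1,pk')$. Summing the two counts gives $p^\alpha+p^{\alpha-1}=(p+1)p^{\alpha-1}$ classes.

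The proof is essentially bookkeeping, so there is no serious obstacle; the only subtlety worth flagging is that this clean dichotomy relies crucially on $\mathbb{Z}/p^\alpha\mathbb{Z}$ being a local ring, which is why the same recipe does not produce representatives for general $N$ and why the earlier, more cumbersome convention (involving the orbits of $(\mathbb{Z}/N\mathbb{Z})^\times$ acting on $\mathbb{Z}/N\mathbb{Z}$) is needed outside the prime-power case.
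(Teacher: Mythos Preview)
Your proof is correct and follows essentially the same approach as the paper: split on whether the second coordinate is a unit, normalise by the appropriate inverse, and count. You have in fact filled in the disjointness and uniqueness checks that the paper merely asserts in passing (``These representatives are never equivalent''), and your closing remark about the local-ring structure is a nice observation not made in the paper.
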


\begin{proof}
Let $(x,y)\in(\mathbb{Z}/(p^\alpha)\mathbb{Z})^2$. If $y$ is invertible, let $a=y^{-1}x$, and then $(x,y)\sim(a,1)$. If not, then $y=pl$ for some $l\in\mathbb{N}$ but since $\gcd(x,y)=1$, it implies that $x$ is invertible, and thus that $(x,y)\sim(1,b)$ for $b=x^{-1}y$. These representatives are never equivalent, and we have $p^{\alpha}+p^{\alpha-1}=(p+1)p^{\alpha-1}$ of them. 
\end{proof}

\begin{remark}
The relationship with the set of representatives one would have got after using the recipe explained above, is as follows. The representatives of the form $(a,1)$ for $a\in[|0,p^\alpha-1|]$ are obtained in both procedures, while for $(1,b)$ with $b$ not invertible, one can write $b=b'k^\beta$ with $b'$ invertible. Then $[1:b]=[(b')^{-1}:k^\beta]$, and $((b')^{-1},k^\beta)$ is the representative of the class $[1:b]$ that one would have got out of the first method. When $N=p^\alpha$, it turns out that the choice of representatives given in \ref{prop:palpha} is often convenient.
\end{remark}

%%%%%%%%%%%%%%%%%%%%%%%%%%%%%%%%%%%%%%
\subsection{The Index Formula}

\begin{proposition}\label{chinese}
Let $M,N\in\mathbb{N}$ be coprime numbers. Then
$$\left.\begin{array}{ccc} \mathbb{P}^1(\mathbb{Z}/MN\mathbb{Z}) & \rightarrow & \mathbb{P}^1(\mathbb{Z}/M\mathbb{Z})\times\mathbb{P}^1(\mathbb{Z}/N\mathbb{Z}) \\
(c,d) & \mapsto & ((c_1,d_1),(c_2,d_2))
\end{array}\right.$$
is a bijection. The pair $(c,d)$ is a representative of the point $[c:d]$, and the $c_i$ or $d_i$ are the reductions of $c$ and $d$ modulo $M$ and $N$, respectively.
\end{proposition}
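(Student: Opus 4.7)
The statement is essentially a projectivisation of the Chinese Remainder Theorem, so the plan is to lift CRT from the coefficient rings to the full structure that defines $\mathbb{P}^1$.

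First I would recall that since $\gcd(M,N)=1$, CRT gives a ring isomorphism $\mathbb{Z}/MN\mathbb{Z} \xrightarrow{\sim} \mathbb{Z}/M\mathbb{Z} \times \mathbb{Z}/N\mathbb{Z}$ sending $x$ to $(x_1,x_2)$, its reductions modulo $M$ and $N$. Squaring gives a bijection $(\mathbb{Z}/MN\mathbb{Z})^2 \xrightarrow{\sim} (\mathbb{Z}/M\mathbb{Z})^2 \times (\mathbb{Z}/N\mathbb{Z})^2$ via $(c,d)\mapsto ((c_1,d_1),(c_2,d_2))$, and restricting to units yields $(\mathbb{Z}/MN\mathbb{Z})^\times \xrightarrow{\sim} (\mathbb{Z}/M\mathbb{Z})^\times \times (\mathbb{Z}/N\mathbb{Z})^\times$ as a group isomorphism.

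Next I would show that the unimodularity condition $(\mathbb{Z}/MN\mathbb{Z})\cdot c + (\mathbb{Z}/MN\mathbb{Z})\cdot d = \mathbb{Z}/MN\mathbb{Z}$ corresponds under CRT to the conjunction of the two analogous conditions modulo $M$ and modulo $N$. Equivalently, $\gcd(c,d,MN)=1$ iff $\gcd(c,d,M)=1$ and $\gcd(c,d,N)=1$, which is immediate from $\gcd(M,N)=1$. Hence the map on pairs restricts to a bijection between the subsets of \emph{unimodular} pairs on both sides.

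Finally I would pass to equivalence classes. Two unimodular pairs $(c,d)$ and $(c',d')$ are $\sim$-equivalent in $(\mathbb{Z}/MN\mathbb{Z})^2$ iff $(c',d')=\lambda\cdot(c,d)$ for some $\lambda\in(\mathbb{Z}/MN\mathbb{Z})^\times$. Under the CRT isomorphism of units, this is equivalent to the existence of $\lambda_1\in(\mathbb{Z}/M\mathbb{Z})^\times$ and $\lambda_2\in(\mathbb{Z}/N\mathbb{Z})^\times$ with $(c'_1,d'_1)=\lambda_1(c_1,d_1)$ and $(c'_2,d'_2)=\lambda_2(c_2,d_2)$, i.e.\ to $[c_1:d_1]=[c'_1:d'_1]$ and $[c_2:d_2]=[c'_2:d'_2]$ in the respective projective lines. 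Therefore the induced map on quotients is well-defined and bijective, proving the proposition.

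The only subtle point is that the $\sim$-equivalence on $\mathbb{P}^1(\mathbb{Z}/MN\mathbb{Z})$ involves a single scalar $\lambda$, while on the product it involves an independent pair $(\lambda_1,\lambda_2)$; everything hinges on the fact that $(\mathbb{Z}/MN\mathbb{Z})^\times\cong(\mathbb{Z}/M\mathbb{Z})^\times\times(\mathbb{Z}/N\mathbb{Z})^\times$ is \emph{surjective}, so that any pair $(\lambda_1,\lambda_2)$ of scalings on the two factors is realised by an honest scalar $\lambda\in(\mathbb{Z}/MN\mathbb{Z})^\times$. This is the crux, and it fails without the coprimality hypothesis.
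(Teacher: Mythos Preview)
Your proof is correct and is exactly the approach the paper takes: the paper simply states that ``Proposition~\ref{chinese} is none other than the Chinese remainder theorem'' and gives no further details. You have supplied the elaboration the paper omits, including the key observation that the bijection of unit groups $(\mathbb{Z}/MN\mathbb{Z})^\times \cong (\mathbb{Z}/M\mathbb{Z})^\times \times (\mathbb{Z}/N\mathbb{Z})^\times$ is what makes the quotient by $\sim$ behave correctly on both sides.
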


Proposition \ref{chinese} is none other than the Chinese remainder theorem, and from Proposition \ref{prop:palpha} we know that:

$$|\mathbb{P}^1(\mathbb{Z}/(p^\alpha)\mathbb{Z})|=(p+1)p^{\alpha-1}\ ,$$

hence the following holds.

\begin{proposition}
$$
|\mathbb{P}^1(\mathbb{Z}/N\mathbb{Z})|=N\prod_{p|N}(1+\frac{1}{p})
\ . 
$$
\end{proposition}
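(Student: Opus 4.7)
The plan is to combine the two preceding propositions via the prime factorisation of $N$. Write $N=\prod_{p\mid N} p^{\alpha_p}$. Proposition~\ref{chinese} provides the multiplicativity one needs: an easy induction on the number of prime factors gives
$$|\mathbb{P}^1(\mathbb{Z}/N\mathbb{Z})|=\prod_{p\mid N}|\mathbb{P}^1(\mathbb{Z}/p^{\alpha_p}\mathbb{Z})|,$$
since the prime powers $p^{\alpha_p}$ are pairwise coprime, so at each step the CRT bijection applies.

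Next I would invoke Proposition~\ref{prop:palpha}, which says $|\mathbb{P}^1(\mathbb{Z}/p^{\alpha}\mathbb{Z})|=(p+1)p^{\alpha-1}$. The key algebraic rewriting is
$$(p+1)p^{\alpha-1}=p^{\alpha}\Bigl(1+\tfrac{1}{p}\Bigr),$$
so that each local factor already carries the global prefactor $p^{\alpha_p}$ together with the Euler-product style factor $(1+1/p)$.

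Finally I would just multiply:
$$|\mathbb{P}^1(\mathbb{Z}/N\mathbb{Z})|=\prod_{p\mid N}p^{\alpha_p}\Bigl(1+\tfrac{1}{p}\Bigr)=\Bigl(\prod_{p\mid N}p^{\alpha_p}\Bigr)\prod_{p\mid N}\Bigl(1+\tfrac{1}{p}\Bigr)=N\prod_{p\mid N}\Bigl(1+\tfrac{1}{p}\Bigr),$$
where the first factor collapses to $N$ by the prime factorisation.

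There is no real obstacle here; the proof is essentially a two-line bookkeeping argument once Propositions~\ref{chinese} and~\ref{prop:palpha} are in hand. The only point that merits a brief sentence is the inductive application of the CRT bijection to reduce from a general $N$ to prime powers, but this is routine.
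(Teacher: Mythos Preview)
Your proof is correct and follows essentially the same approach as the paper: factor $N$ into prime powers, use Proposition~\ref{chinese} to reduce to the prime-power case, plug in Proposition~\ref{prop:palpha}, and regroup the product. The paper's version is simply the one-line chain of equalities you wrote out, without the extra commentary on induction.
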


\begin{proof}
Write $N=\prod_{i}p_i^{\alpha_i}$. We have 
$$|\mathbb{P}^1(\mathbb{Z}/N\mathbb{Z})|=\prod_{i}|\mathbb{P}^1(\mathbb{Z}/(p_i^{\alpha_i})\mathbb{Z})|=\prod_{i}(p_i+1)p_i^{\alpha_i-1}=N\prod_{p|N}(1+\frac{1}{p})$$
\end{proof}

%%%%%%%%%%%%%%%%%%%%%%%%%%%%%%%%%%
%%%%%%%%%%%%%%%%%%%%%%%%%%%%%
\section{Dessins d'enfants and analytical modular curves}\label{sec2}

The isomorphism $\psl\simeq(\mathbb{Z}/2\mathbb{Z})\star(\mathbb{Z}/3\mathbb{Z})$ induces a structure of bipartite fat graph on each of the sets $\He(N)\backslash\He(1)$. These bipartite fat graphs are called \textit{dessin d'enfants}, after Grothendieck's famous {\it Esquisse d'une Programme} \cite{groth} (see \cite{Schneps2011,Schneps1994}). 
This additional structure on $\He(N)\backslash\He(1)$ gives an efficient and purely group-theoretic definition of the cusps of the classical modular curves $X_0(N)$. The parametrisation of the $\He(N)\backslash\He(1)$ of the last section is of good use in order to understand and handle these dessins d'enfants.

\subsection{Generalities}
We begin with some rudiments and notation. For a more detailed introduction to dessins d'enfants, we refer to the book \cite{joneswolfart}.
%%%%%%%%%%%%%%%%%
\subsubsection{Fat graphs}

\begin{definition}
A {\bf fat graph} (or ribbon graph) is a connected simple graph $\Gamma=(V,E)$ together with a cyclic orientation of $E_v$ for every $v\in V$, where $E_v\subset E$ is the set of edges incident to $v$.
\end{definition}

\begin{remark}
If $\Gamma=(V,E)$ is a fat graph then for $e\in E$ an edge and $v\in V$ one of the two ends of $e$, it makes sense to speak of ``the edge directly after $e$ with respect to $v$". 
\end{remark}

An edge $e$ of a graph $\Gamma=(V,E)$ is oriented if one of its two ends has been chosen to be its source, and the other one, its target.

\begin{definition}
Let $\Gamma=(V,E)$ be a fat graph with $V$ and $E$ finite. A face of length $k$ is a cycle of oriented edges $(\vec e_1,...,\vec e_k)$ such that: 
\begin{itemize}
\item for all $i\in\mathbb{Z}/k\mathbb{Z}$, the source of $\vec e_{i+1}$ is the target of $\vec e_i$,
\item $e_{i+1}$ is the edge directly after $e_i$ with respect to the target of $\vec e_i$. 
\end{itemize}
\end{definition}

The faces form a partition of the set of oriented edges.

%%%%%%%%%%%%%%%%%%%%%%%%%%%%%%%%%%%%%
\subsubsection{Cusps and genus}

A fat graph gives rise to topological surfaces as follows. Let $\Gamma=(V,E)$ be a connected fat graph with finitely many edges and vertices. For each face $F$ of $\Gamma$, let us glue the boundary of a topological $2$-cell to $F$, following the cyclic orientation of the latter. 
The resulting topological oriented surface $\tilde S_\Gamma$ is thus obtained as a cell complex whose $1$-skeleton is $\Gamma$, and is compact and connected.
Instead of gluing copies of a disk one can also glue copies of once-punctured disks $\mathbb{D}^2\backslash\{0\}$, and that yields another topological connected surface $S_\Gamma$ which can be obtained from $\tilde S_\Gamma$ by removing one point in the interior of each of faces in $\tilde S_\Gamma$. 
These points are called {\bf cusps}. 

\begin{definition}
The genus $g(\Gamma)$ of a connected fat graph $\Gamma$ is the genus of the closed surface $\tilde S_\Gamma$. More intrinsically, $g(\Gamma)$ is half the rank of the first homology group of the chain complex
$$\xymatrix{0 \ar[r] & \mathbb{Z}^{\mathrm{faces}} \ar[r]^d & \mathbb{Z}^{E} \ar[r]^\delta & \mathbb{Z}^{V}\ar[r] & 0}$$
\end{definition}

%%%%%%%%%%%%%%%%%%%%%%%%%%%
\subsubsection{Bipartite fat graphs}

\begin{definition}
A bipartite graph is a graph $(V,E)$ for which the set $V$ is written as the disjoint union of a set $V_W$ of white vertices and a set $V_B$ of black vertices:

$$V=V_W\amalg V_B\ ,$$

and such that each edge $e\in E$ has one ``white" end and one ``black" end. A bipartite fat graph is a bipartite graph endowed with a fat structure.
\end{definition}

\begin{figure}[h!]
\centering
\begin{tikzpicture}
\draw[fill=black] (0,0) circle (0.1);
\draw (0,1) circle (0.1);
\draw (0,-1) circle (0.1);
\draw (0,0.5) circle(0.5);
\draw (0,0)--(0,-0.9);
\draw (0,-0.5) node{$2$};
\draw (-0.5,0.5) node{$1$};
\draw (0.5,0.5) node{$3$};
\draw (0.3,-0.2) node{$b$};
\draw (0,1.3) node{$w_1$};
\draw (0,-1.3) node{$w_2$};
\end{tikzpicture}

\caption{{\sf A bipartite graph drawn on a plane, whose counterclockwise orientation induces a fat structure on the graph. For example, the cyclic ordering of the edges around the black vertex is $1<2<3<1$. 
This fat graph has two faces, corresponding to the cycles $(1_{w_1\rightarrow b},2_{b\rightarrow w_2},2_{w_2\rightarrow b},3_{b\rightarrow w_1})$ (exterior face) and $(3_{w_1\rightarrow b},1_{b\rightarrow w_1})$ (inner face). 
Moreover $\tilde S_\Gamma\simeq\mathbb{S}^2$ and $S_\Gamma$ has two cusps.}\label{firstdessin}
}
\end{figure}

A bipartite fat structure on a graph $\Gamma=(V,E)$ singles out two permutations of its edges: a permutation $x\in\mathfrak{S}(E)$ which sends each edge to the next one with respect to its white end, and a permutation $y\in\mathfrak{S}(E)$ which sends each edge to the next one with respect to its black end.

\item Conversely, let $x$ and $y$ be two permutations of a set $E$, written as a product of disjoint circles. Let us define a bipartite fat graph $\Gamma=(V_W\amalg V_B,E)$ by setting $V_W$ to be the set of cycles in $x$, $V_B$ the set of cycles in $y$, and where an edge $e\in E$ links the only two vertices (one in $V_W$, one in $V_B$) that it ``belongs" to. This construction is easily seen to be the inverse of the one of above.

This reasoning shows that any bipartite fat graph can be drawn on an oriented surface, in such a way that the fat structure at each vertex coincides with the counterclockwise orientation of the surface. We will follow this convention in the sequel. There might be crossings among the edges, whener the genus of the surface on which the graph is frawn is smaller that the genus of the graph. Note that the group generated by $x$ and $y$ acts transitively on $E$ if and only if $\Gamma$ is connected.

\begin{remark}
The cycles of $(xy)$ (in our notations, the permutation group $\mathfrak{S}(E)$ acts on the right of $E$, hence $x$ acts first, and them $y$) are in one-to-one correspondence with the faces of $\Gamma$, as follows. 
Let $e$ be an edge of $\Gamma$. 
The two possible orientations for $e$ are denoted $e_{w\rightarrow b}$ and $e_{b\rightarrow w}$.
Then a cycle $(e_1,...,e_k)$ with $e_i\in E$ for $i\in[|1,k|]$ corresponds to the face 
$(e_{1,b\rightarrow w},f_{1,w\rightarrow b},e_{2,b\rightarrow w},f_{2,w\rightarrow b}...,e_{k,b\rightarrow w},f_{k,w\rightarrow b})$ of $\Gamma$, where each $f_i$ labels the edge directly after $e_i$ with respect to the white end of the latter.
\end{remark}

%%%%%%%%%%%%%%%%%%%%%%%%%%%
\subsubsection{Algebraic Bipartite Maps and Dessins d'Enfants}

\begin{definition}
An algebraic bipartite map (ABM) is a quadruple 

$$\bip=(G,x,y,E)\ ,$$ 

where $E$ is a set, $x,y\in\mathfrak{S}(E)$, and such that $G=\langle x,y\rangle$ acts transitively on the right of $E$. The group $G$ is called the {\bf monodromy group} (or {\bf cartographic group}) of $\bip$.
If $E$ is a finite set, then $\bip$ is called a {\bf dessin d'enfant} ({\bf dessin}, for short). 
The type of the ABM is the triple $(a,b,c)$ where $a$ (resp. $b$, $c$) is the order of $x$ (resp. $y$, $xy$).
\end{definition}

\begin{definition}
A morphism of ABMs $(G,x,y,E)\rightarrow(G',x',y',E')$ is a pair

$$\left\{\begin{array}{c}f:E\rightarrow E'\\
\phi:G\rightarrow G'\end{array}\right\} \ ,$$

where $f$ is a morphism of sets, and $\phi$ a morphism of groups satisfying $\phi(x)=x'$ and $\phi(y)=y'$, and such that for all $e\in E$ and $g\in G$:

$$f(e)\cdot\phi(g)=f(e\cdot g)\ .$$ 

A morphism of dessins is a morphism of ABMs between two dessins.
\end{definition} 

For instance, the dessin d'enfant corresponding to the bipartite fat graph shown in Figure \ref{firstdessin} is $(\mathfrak{S}_3,(13),(123),\{1,2,3\})$. 

%%%%%%%%%%%%%%%%%%%%%%%%%%%%%%%%%%%%%%%%%%%%%%%%
\subsubsection{Automorphism group}
\begin{definition}
Let $\bip=(G,x,y,E)$ be an ABM. The automorphism group $\aut(\bip)$ of $\bip$ is the centraliser of $G$ in $\mathfrak{S}(E)$, that is, the group of permutations that commute with $x$ and $y$. We will consider $\aut(\bip)$ as acting on the left of $E$.
\end{definition}

\begin{example}
Consider for example the dessin d'enfant of type $(2,3,2)$ in Figure \ref{f:egAut}. First,
$$xy=(14)(26)(35)$$
and $G\simeq\mathfrak{S}_3\simeq\aut(\bip)$. The automorphism group is generated by the rotation of order $2$ around the central vertex, represented by the permutation $(14)(36)(25)$, and the rotation of order $3$ around the black vertices, represented by $(123)(654)$. The topological surface $S_\bip$ is a sphere with three cusps. Note here the difference between the monodromy group and the automorphism group. The latter is a group of symmetrie and is {\bf not} generated (while the monodromy group is) by the local cyclic order around the vertices.

\begin{figure}[h!]
\centering
\begin{tikzpicture}
\draw (0,1) circle (0.1);
\draw (0,0) circle (0.1);
\draw (0,-1) circle (0.1);
\draw[fill=black!90] (-1,0) circle (0.1);
\draw[fill=black!90] (1,0) circle (0.1);
\draw (-1,0)--(0,1) node[midway]{$3$};
\draw (-1,0)--(0,0) node[midway]{$2$};
\draw (-1,0)--(0,-1) node[midway]{$1$};
\draw (1,0)--(0,1) node[midway]{$4$};
\draw (1,0)--(0,0) node[midway]{$5$};
\draw (1,0)--(0,-1) node[midway]{$6$};
\end{tikzpicture}
{\caption{\sf 
The dessin $\bip=(G=\langle x,y\rangle,x=(16)(34)(25),y=(123)(456),[|1,6|])$}
\label{f:egAut}}
\end{figure}
\end{example}

The action of a group $G$ on a set $E$ is said to be 
\begin{description}
\item[transitive] if $\forall e,e'\in E$,  there is at least one $g\in G$ such that $e'=e\cdot g$;
\item[semi-regular] if $\forall e,e'\in E$, there is at most one $g\in G$ such that $e'=e\cdot g$;
\item[regular] if $\forall e,e'\in E$, there is exactly one $g\in G$ such that $e'=e\cdot g$.
\end{description}

The following two results on automorphisms of ABMs correspond respectively to Theorem 2.1 and Corollary 2.1 in \cite{joneswolfart}, where the proof of these statements can be found.

\begin{proposition}
Let $\bip=(G,x,y,E)$ be an ABM. Then
\begin{enumerate}
\item $\aut(\bip)$ acts semi-regularly on $E$;
\item $\aut(\bip)$ acts regularly on $E$ if and only if $G$ does, and in that case $G\simeq \aut(\bip)$.
\end{enumerate}
\end{proposition}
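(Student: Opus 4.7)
The plan is to prove the two assertions in sequence; the core structural fact is that $G$ and $\aut(\bip)$ act on $E$ by commuting actions (right and left respectively), with $G$ transitive by definition of an ABM, and standard permutation-group bookkeeping then forces everything.

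For part (1), my strategy is a one-line argument that a fixed point of $\sigma \in \aut(\bip)$ propagates to all of $E$ by transitivity of $G$: if $\sigma(e)=e$ and $e' = e\cdot g$ for some $g\in G$, then $\sigma(e') = \sigma(e\cdot g) = \sigma(e)\cdot g = e'$ because $\sigma$ commutes with the right $G$-action, whence $\sigma = \mathrm{id}_E$. This is exactly the definition of a semi-regular action.

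For part (2), I will prove both implications and the isomorphism statement together. Assuming $G$ acts regularly, the plan is to fix a basepoint $e_0 \in E$, use the resulting bijection $G \to E$, $g \mapsto e_0\cdot g$, to define for each $h\in G$ a candidate automorphism $\sigma_h(e_0\cdot g) := e_0\cdot (hg)$, verify that $\sigma_h\in\aut(\bip)$ (a short check that it commutes with the right $G$-action), and observe that $h \mapsto \sigma_h$ is an injective group homomorphism $G \hookrightarrow \aut(\bip)$. Given $e = e_0\cdot g$ and $e' = e_0\cdot g'$, the element $\sigma_{g'g^{-1}}$ sends $e$ to $e'$, so $\aut(\bip)$ acts transitively; combining this with part (1) yields regularity of $\aut(\bip)$, and the cardinality identity $|\aut(\bip)| = |E| = |G|$ then upgrades the injection to the isomorphism $G \simeq \aut(\bip)$. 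For the converse, assuming $\aut(\bip)$ acts regularly, I will show $G$ acts freely (hence regularly, as it is already transitive): if $e\cdot g = e$ for some $e\in E$ and $g\in G$, then for any $e'\in E$ the transitivity of $\aut(\bip)$ produces $\sigma\in\aut(\bip)$ with $\sigma(e)=e'$, and $e'\cdot g = \sigma(e)\cdot g = \sigma(e\cdot g) = \sigma(e) = e'$ shows $g$ is the identity permutation of $E$.

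I do not expect a serious obstacle; the argument is bookkeeping, and the only place where an error would be easy to make is in tracking the direction of actions (left versus right) when checking that the candidate $\sigma_h$ indeed commutes with the right $G$-action, so that is the computation I will watch most carefully.
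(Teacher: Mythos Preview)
Your argument is correct in spirit and, for part (1) and the converse in part (2), complete. The paper itself does not give a proof of this proposition: it simply cites Theorem~2.1 and Corollary~2.1 of \cite{joneswolfart}. So there is no ``paper's own proof'' to compare against, and your approach is the standard one.

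There is, however, one small gap worth flagging. In the forward direction of part (2), after constructing the injective homomorphism $h\mapsto\sigma_h$ from $G$ into $\aut(\bip)$ and establishing that $\aut(\bip)$ acts regularly, you invoke the cardinality identity $|\aut(\bip)|=|E|=|G|$ to conclude that the injection is an isomorphism. This is fine when $E$ is finite (i.e.\ for dessins), but the statement is about ABMs in general, and the paper's central example $\bip_\infty$ has infinite $E$. Equal infinite cardinalities do not force an injection to be a bijection. The fix is immediate and uses only what you have already established: given any $\tau\in\aut(\bip)$, regularity of $G$ gives a unique $h\in G$ with $\tau(e_0)=e_0\cdot h$, and then $\sigma_h^{-1}\tau$ fixes $e_0$, so by part~(1) it is the identity, whence $\tau=\sigma_h$. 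This gives surjectivity without any appeal to cardinality.
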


In the latter case one says that the ABM is {\bf regular}.

\begin{proposition}
Let $(G,x,y,E)$ be an ABM, and $e\in E$. Let $G_e=\mathrm{Stab}_G(e)$. Then
$$\aut(\bip)\simeq N_G(G_e)/G_e \ , $$
where $N_G(G_e)$ is the normaliser of $G_e$ in $G$. 
\end{proposition}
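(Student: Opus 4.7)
The plan is to construct an explicit surjective group homomorphism $\Phi: N_G(G_e) \to \aut(\bip)$ whose kernel is $G_e$, and then invoke the first isomorphism theorem; note that $G_e \trianglelefteq N_G(G_e)$ by the very definition of the normaliser, so the quotient makes sense.

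First I would use transitivity of $G$ on $E$ to parametrise elements of $\aut(\bip)$. Given $\alpha \in \aut(\bip)$, there exists $h \in G$ with $\alpha \cdot e = e \cdot h$, and since $\alpha$ commutes with the right action of $G$ one gets $\alpha \cdot (e \cdot g) = e \cdot (hg)$ for every $g \in G$. The key step is to characterise exactly which $h \in G$ arise in this way. Consistency of the formula $e \cdot g \mapsto e \cdot (hg)$ requires that $e \cdot g_1 = e \cdot g_2$ imply $e \cdot hg_1 = e \cdot hg_2$, equivalently $g_1 g_2^{-1} \in G_e \Rightarrow hg_1g_2^{-1}h^{-1} \in G_e$, i.e. $h G_e h^{-1} \subseteq G_e$; applying the same reasoning to $\alpha^{-1}$ (or to $h^{-1}$) forces the reverse inclusion, so altogether $h \in N_G(G_e)$. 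Conversely, for any $h \in N_G(G_e)$, the formula $\alpha_h(e \cdot g) := e \cdot (hg)$ is well-defined, bijective (with inverse $\alpha_{h^{-1}}$), and manifestly commutes with the right action of $G$, hence defines an element of $\aut(\bip)$.

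Next I would set $\Phi(h) := \alpha_h$ and verify it is a group homomorphism by the short computation
\[
(\alpha_{h_1} \circ \alpha_{h_2})(e \cdot g) = \alpha_{h_1}(e \cdot h_2 g) = e \cdot h_1 h_2 g = \alpha_{h_1 h_2}(e \cdot g).
\]
Surjectivity of $\Phi$ is the parametrisation established above. For the kernel, $\alpha_h = \mathrm{id}_E$ iff $e \cdot h = e$ (since any automorphism is determined by its value on $e$, by transitivity and the commutation property), iff $h \in G_e$. The first isomorphism theorem then yields $\aut(\bip) \simeq N_G(G_e)/G_e$.

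The only delicate point is the equivalence ``$\alpha_h$ well-defined and bijective $\Leftrightarrow$ $h \in N_G(G_e)$'', i.e., confirming that the normaliser is exactly the right subgroup; the rest is a routine application of the orbit–stabiliser correspondence $E \simeq G_e \backslash G$ for the right $G$-set $E$. One small conventional caveat to handle cleanly is that $\aut(\bip)$ acts on the left while $G$ acts on the right, so the homomorphism $\Phi$ naturally lands in $\aut(\bip)$ without any anti-homomorphism correction.
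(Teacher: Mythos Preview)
Your proof is correct and follows the standard argument for this classical result. The paper itself does not supply a proof for this proposition but instead refers the reader to Corollary~2.1 in \cite{joneswolfart}; your construction of the explicit homomorphism $\Phi: N_G(G_e)\to\aut(\bip)$, $h\mapsto\alpha_h$, together with the identification of its kernel as $G_e$, is precisely the orbit--stabiliser style argument one finds there.
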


%%%%%%%%%%%%%%%%%%%%%%%
\subsubsection{Quotient of ABMs}
Let $\bip=(G,x,y,E)$ be an ABM, and let $H<\aut(\bip)$. The left-quotient of $\bip$ by $H$ is another ABM denoted $H\backslash\bip$ together with a morphism
$$\bip\rightarrow H\backslash\bip$$
The quotient $H\backslash\bip=(G',x',y',E')$ is constructed as follows.
\begin{enumerate}
\item $E'$ is the set of equivalence classes $H\backslash E$.
\item  The permutation $x'$ of $E'$ is the one satisfying $[e]\cdot x'=[e\cdot x]$ for all $e\in E$.
\item  Similarly, $y'$ is the permutation of $E'$ such that $[e]\cdot y'=[e\cdot y]$ for all $e\in E$.
\item One sets $G'=\langle x',y'\rangle$.
\end{enumerate}

\begin{remark}
If $\bip$ is of type $(p,q,r)$ then $H\backslash\bip$ is of type $(p',q',r')$ where $p'|p,\ q'|q,\ r'|r$.
\end{remark}

\begin{example}
Consider the dessin d'enfant given in Figure \ref{f:egAut}:
$$\bip=(G=\left<x,y\right>,x=(16)(34)(25),y=(123)(456),[|1,6|])$$
Let $H=\langle(16)(35)(24)\rangle<\aut(\bip)$. The quotient $\bip\rightarrow H\backslash\bip$ is given schematically in Figure \ref{f:egQuot}, and
\[
H\backslash\bip=(\mathfrak{S}_3,(13),(123),[|1,3|]) \ .
\]
\begin{figure}[h!]
\centering
\begin{subfigure}{0.2\textwidth}
\begin{tikzpicture}
\draw (0,1) circle (0.1);
\draw (0,0) circle (0.1);
\draw (0,-1) circle (0.1);
\draw[fill=black!90] (-1,0) circle (0.1);
\draw[fill=black!90] (1,0) circle (0.1);
\draw[blue] (-1,0)--(0,1) node[midway]{$3$};
\draw[red] (-1,0)--(0,0) node[midway]{$2$};
\draw[green] (-1,0)--(0,-1) node[midway]{$1$};
\draw[green] (1,0)--(0,1) node[midway]{$4$};
\draw[red] (1,0)--(0,0) node[midway]{$5$};
\draw[blue] (1,0)--(0,-1) node[midway]{$6$};
\end{tikzpicture}
\end{subfigure}
\begin{subfigure}{0.1\textwidth}
\centering
\begin{tikzpicture}
\draw[->] (-3,0)--(1,0);
\end{tikzpicture}
\end{subfigure}
\begin{subfigure}{0.4\textwidth}
\centering
\begin{tikzpicture}
\draw[fill=black] (0,0) circle (0.1);
\draw (0,1) circle (0.1);
\draw (0,-1) circle (0.1);
\draw (0,0.5) circle(0.5);
\draw (0,0)--(0,-0.9);
\draw[red] (0,-0.5) node{$2$};
\draw[green] (-0.5,0.5) node{$1$};
\draw[blue] (0.5,0.5) node{$3$};
\end{tikzpicture}
\end{subfigure}
\caption{
{\sf An example of a quotient of dessin d'enfants}
\label{f:egQuot}}
\end{figure}
\end{example}

%%%%%%%%%%%%%%%%%%%%%%%%%%%
\subsubsection{$\He(1)$ and the universal ABM of type $(2,3,\infty)$}

Recall the standard presentation of $\psl=\He(1)$
\begin{equation}
\He(1)\simeq\psl\simeq C_2\star C_3=\langle S,U|S^2=U^3=1\rangle \ .
\end{equation}

\begin{definition}
The universal ABM of type $(2,3,\infty)$ is the ABM
\[
\bip_\infty=(\psl,S,U,\psl)
\]
\[
S:=\left[\begin{array}{cc} 0 & -1 \\ 1 & 0 \end{array}\right],\ U:=\left[\begin{array}{cc} 0 & -1 \\ 1 & -1 \end{array}\right] 
\]
where the action of the group $\psl$ on the set $\psl$ corresponds to the group right-multiplication. This ABM is regular, all its white vertices are two-valent and all its black vertices are three-valent. It is universal in the sense that any ABM of type $(2,3,c)$ for some $c\in\mathbb{N}_{>0}$ is isomorphic to a quotient $H\backslash\bip_\infty$ for some $H<\aut(\bip_\infty)=\psl$.
\end{definition}

Part of the corresponding bipartite fat graph is shown in Figure \ref{f:tri}. It is easily obtained from the universal trivalent tree by the replacement of each vertex of the tree by a black vertex, and the addition of a white vertex in the middle of each edge.

%%%%%%%%%%%%%%%%%%%%%%%%%%%%%%%%%%%%%%
\subsubsection{Projective bases of $L_1$}

The set of all oriented projective bases that generate $L_1$ can be identified with the set of edges of $\bip_\infty$ via the map
$$\left[\begin{array}{c} f^1 \\ f^2 \end{array}\right]\rightarrow \left[\begin{array}{cc} f^1_1 & f^1_2 \\ f^2_1 & f^2_2 \end{array}\right]$$
where the $f^i_j$ are the coordinates of the vector $f^i$ in the reference basis. 

Since $\psl\simeq \langle S\rangle \star \langle U\rangle$, the set of edges of $\bip_\infty$ corresponds to the words $e, eS$, and $e\cdot S^{k}U^{l_1}SU^{l_2}...S^{k_n}U^{l_n}S^{k'}$ for integers $n\geq0$, $k,k'\in[|0,1|]$ and $l_1,...,l_n\in[|1,2|]$. Here $e$ is a conventional ``origin" associated with the identity matrix in $\psl$.

\begin{figure}[h!]
\centering
\begin{tikzpicture}[scale=1]
\draw[fill=black] (-1,0) circle (0.1);
\draw[fill=black] (1,0) circle (0.1);
\draw[fill=black] (-2,1) circle (0.1);
\draw[fill=black] (-2,-1) circle (0.1);
\draw[fill=black] (2,-1) circle (0.1);
\draw[fill=black] (2,1) circle (0.1);
\draw (-1,0)--(1,0) node[pos=0.25]{$e$} node[pos=0.75]{$eS$};
\draw (-1,0)--(-2,-1) node[pos=0.25]{$eU$} node[pos=0.75]{$eUS$};
\draw (-1,0)--(-2,1) node[pos=0.25]{$eU^2$} node[pos=0.75]{$eU^2S$};
\draw (1,0)--(2,1) node[pos=0.25]{$eSU^2$} node[pos=0.75]{$eSU^2S$};
\draw (1,0)--(2,-1) node[pos=0.25]{$eSU$} node[pos=0.75]{$eSUS$};
\draw (-2,1)--(-1.5,2);
\draw (-2,1)--(-3.5,1);
\draw (-2,-1)--(-1.5,-2);
\draw (-2,-1)--(-3.5,-1);
\draw (2,1)--(1.5,2);
\draw (2,1)--(3.5,1);
\draw (2,-1)--(1.5,-2);
\draw (2,-1)--(3.5,-1);
\draw (3,1) circle (0.1);
\draw (3,-1) circle (0.1);
\draw (-3,1) circle (0.1);
\draw (-3,-1) circle (0.1);
\draw (0,0) circle (0.1);
\draw (-1.5,0.5) circle (0.1);
\draw (-1.5,-0.5) circle (0.1);
\draw (1.5,0.5) circle (0.1);
\draw (1.5,-0.5) circle (0.1);
\draw (-1.75,1.5) circle (0.1);
\draw (-1.75,-1.5) circle (0.1);
\draw (1.75,1.5) circle (0.1);
\draw (1.75,-1.5) circle (0.1);
\end{tikzpicture}
\caption{{\sf A part of $\bip_\infty$, with reference edge $e$.}
\label{f:tri}}
\end{figure}

The map above associates $\mathrm{Id}\in\psl$ to the reference projective basis in $\mathbb{P}V$. Any other projective basis that generate $L_1\in\psl\backslash\PGL_2^+(\mathbb{Q})$ is thus identified with the corresponding word of $S$'s and $U$'s. Note that
$$\left[\begin{array}{cc} f^1_1 & f^1_2 \\ f^2_1 & f^2_2 \end{array}\right]\cdot S=\left[\begin{array}{cc} f^1_2 & -f^1_1 \\ f^2_2 & -f^2_1 \end{array}\right]$$ and
$$\left[\begin{array}{cc} f^1_1 & f^1_2 \\ f^2_1 & f^2_2 \end{array}\right]\cdot U=\left[\begin{array}{cc} f^1_2 & -(f^1_1+f^1_2) \\ f^2_2 & -(f^2_1+f^2_2) \end{array}\right]$$
This right-action of $\psl$ on $\psl\backslash\PGL_2^+(\mathbb{Q})$ describes global projective linear transformations of $V$ that preserve $L_1$. 

However, if one considers a projective lattice $N$-hyperdistant from $L_1$, it is a priori not preserved by such a projective linear transformation in $\psl$, but instead is mapped to another projective lattice $N$-hyperdistant from $L_1$ (since the right action of $\psl$ preserves the hyperdistance). The dessins d'enfants which correspond to the Hecke groups $\He(N)$ contains this data quite efficiently.

%%%%%%%%%%%%%%%%%%%%%%%%%%%%%%%%%%%%%%%%%%%%%%%%
\subsection{Definition of the dessins $\bip_{0,N}$}\label{subbon}

Let $N\in\mathbb{N}_{>0}$. Since $\He(N)<\psl=\aut(\bip_\infty)$, there is a quotient dessin:
\begin{equation}
\bip_{0,N}=\He(N)\backslash\bip_\infty=\left(G_{0,N},x_{0,N},y_{0,N},E_{0,N}=\He(N)\backslash\He(1)\right)\ .
\end{equation}
We will soon see that if $N\geq2$ those dessins are of type $(a,b,c)$ with:
\begin{equation}
a=2, \ b=3, \ c=N \ .
\end{equation} 
Of course the case $N=1$ corresponds to the trivial dessin with $E$ a singleton. 

Let $X_0(N)=\tilde{S}_{\bip_{0,N}}$ (resp. $Y_0(N)=S_{\bip_{0,N}}$) be the closed topological surface (resp. the topological surface with cusps) associated with $\bip_{0,N}$. The groups $\He(N)$ inherit a genus and a set of cusps from their corresponding dessin.

%%%%%%%%%%%%%%%%%%%%%%%%%%%%%%%%%%%%%%%%%%%%%%%
\subsubsection{Canonical morphisms}

Let $N,d\in\mathbb{N}_{>0}$ with $d$ dividing $N$. Since $\He(N)\leq\He(d)\leq\He(1)$ one has the following.

\begin{proposition}\label{prop:Can}
There is a canonically defined morphism
$$(f,\phi)_{N,d}:\bip_{0,N}\rightarrow\bip_{0,d}.$$
\end{proposition}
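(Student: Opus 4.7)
The plan is to build both components of the desired morphism out of the single inclusion $\He(N)\leq\He(d)$, which is assumed in the statement.

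First I would define the map of edge sets
\[
f:\He(N)\backslash\He(1)\longrightarrow\He(d)\backslash\He(1),\qquad \He(N)\cdot\gamma\longmapsto\He(d)\cdot\gamma.
\]
Since $\He(N)\leq\He(d)$, this assignment does not depend on the chosen representative $\gamma$, so $f$ is well-defined.

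Next, for the group morphism $\phi:G_{0,N}\to G_{0,d}$, I would exploit that both monodromy groups are quotients of $\He(1)$. The right-multiplication action of $\He(1)$ on $\He(N)\backslash\He(1)$ (respectively on $\He(d)\backslash\He(1)$) factors through a surjection $\pi_N:\He(1)\twoheadrightarrow G_{0,N}$ (respectively $\pi_d:\He(1)\twoheadrightarrow G_{0,d}$), which sends $S$ to $x_{0,N}$ and $U$ to $y_{0,N}$ (respectively to $x_{0,d}$ and $y_{0,d}$). The key observation is that
\[
\ker(\pi_N)=\bigcap_{\gamma\in\He(1)}\gamma^{-1}\He(N)\gamma\ \subseteq\ \bigcap_{\gamma\in\He(1)}\gamma^{-1}\He(d)\gamma=\ker(\pi_d),
\]
the inclusion holding because $\He(N)\leq\He(d)$. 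Consequently $\pi_d$ descends through $\pi_N$ to a unique group homomorphism $\phi:G_{0,N}\to G_{0,d}$ satisfying $\phi\circ\pi_N=\pi_d$, and in particular $\phi(x_{0,N})=x_{0,d}$ and $\phi(y_{0,N})=y_{0,d}$.

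Finally I would check the equivariance condition $f(e\cdot g)=f(e)\cdot\phi(g)$ for all $e\in E_{0,N}$ and $g\in G_{0,N}$. Since $G_{0,N}$ is generated by $x_{0,N}$ and $y_{0,N}$, it suffices to verify it for $g\in\{x_{0,N},y_{0,N}\}$, where it amounts to the tautology that coset projection commutes with right multiplication by $S$ or $U$ in $\He(1)$: for $h\in\{S,U\}$,
\[
f\bigl(\He(N)\gamma\cdot\pi_N(h)\bigr)=\He(d)\gamma h=\He(d)\gamma\cdot\pi_d(h)=f(\He(N)\gamma)\cdot\phi(\pi_N(h)).
\]

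The only step requiring genuine care is the construction of $\phi$: naively declaring $\phi(x_{0,N})=x_{0,d}$ and $\phi(y_{0,N})=y_{0,d}$ on generators is trivial, but one must check that every relation satisfied by $(x_{0,N},y_{0,N})$ in $G_{0,N}$ is also satisfied by $(x_{0,d},y_{0,d})$ in $G_{0,d}$; the kernel-containment argument above handles this uniformly, and is the only place where the hypothesis $d\mid N$ is used.
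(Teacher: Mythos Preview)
Your proof is correct and follows essentially the same route as the paper: define $f$ as the natural coset projection $\He(N)\gamma\mapsto\He(d)\gamma$, define $\phi$ by sending $x_{0,N}\mapsto x_{0,d}$ and $y_{0,N}\mapsto y_{0,d}$, and then check equivariance on generators. Your treatment is in fact more careful than the paper's on one point: the paper simply declares $\phi$ on generators without justifying that this extends to a group homomorphism, whereas your normal-core argument $\ker\pi_N\subseteq\ker\pi_d$ cleanly establishes that $\pi_d$ factors through $\pi_N$; this is the right way to close that gap.
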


\begin{proof} Since $\He(N)\leq\He(d)$ are subgroups of finite index in $\He(1)$, the group $\He(N)$ has also finite index in $\He(d)$. Let $I_{N,d}=[\He(d):\He(N)]$. One has:
$$\He(1)=\coprod_{j=1}^{I_{d,1}} \He(N)\cdot\beta_j\ ,$$
$$\He(d)=\coprod_{i=1}^{I_{N,d}} \He(N)\cdot\alpha_i\ .$$
This yields 
$$\He(1)=\coprod_{i,j} \He(N)\cdot (\alpha_i\beta_j)\ ,$$
hence $$\He(N)\backslash\He(1)\simeq\He(d)\backslash\He(1)\times\He(N)\backslash\He(d)\ .$$

Let $f$ be the projection $\He(N)\backslash\He(1)\rightarrow\He(d)\backslash\He(1)$, and

$$\phi:\langle x_{0,N},y_{0,N}\rangle\leq\mathfrak{S}(E_{0,N})\rightarrow\langle x_{0,d},y_{0,d}\rangle\leq\mathfrak{S}(E_{0,d})$$

the group morphism with domain the group generated by $(x_{0,N},y_{0,N})$ and target the group generated by $(x_{0,d},y_{0,d})$. It is defined by $\phi(x_{0,N})=x_{0,d}$ and $\phi(y_{0,N})=y_{0,d}$.

Let $e\in\psl=E(\bip_\infty)$. By definition of the quotient, $(\He(N)\cdot e)\cdot x_{0,N}=\He(N)\cdot (e\cdot x)$, and $(\He(d)\cdot e)\cdot x_{0,d}=\He(d)\cdot (e\cdot x)$ where $x=x_{0,1}$. Subsequently:
$$f((\He(N)\cdot e)\cdot x_{0,N})=f(\He(N)\cdot (e\cdot x))=\He(d)\cdot (e\cdot x)=(\He(d)\cdot e)\cdot x_{0,d}\ .$$
The same reasoning holds for the y's hence
$$(f,\phi):\bip_{0,N}\rightarrow\bip_{0,d}$$
is a morphism of dessin d'enfants.
\end{proof}

\begin{example}
The morphism $(f,\phi)_{6,2}:\bip_{0,6}\rightarrow\bip_{0,2}$ satisfies $f^{-1}_{6,2}(\{1\})=\{1,7,6,12\}$, $f^{-1}_{6,2}(\{2\})=\{2,4,8,10\}$ and $f^{-1}_{6,2}(\{3\})=\{3,5,9,11\}$. It is pictured in Figure \ref{f:egMorph}.
\begin{figure}[h!]
\begin{subfigure}{.4\textwidth}
\centering
\begin{tikzpicture}
\draw[fill=black] (0,0) circle (0.1);
\draw[fill=black] (0,-2) circle (0.1);
\draw (0,1) circle (0.1);
\draw (-0.5,-2.5) circle (0.1);
\draw (0.5,-2.5) circle (0.1);
\draw (0,-1) circle (0.1);
\draw (0,-3) circle (0.1);
\draw (0,-4) circle (0.1);
\draw[fill=black] (1,-3) circle (0.1);
\draw[fill=black] (-1,-3) circle (0.1);
\draw[red] (0,1) arc(90:270:0.5);
\draw[green] (0,1) arc(90:-90:0.5);
\draw[green] (0,-2)--(-0.5,-2.5);
\draw[red] (-0.5,-2.5)--(-1,-3);
\draw[red] (0,-2)--(0.5,-2.5);
\draw[green] (0.5,-2.5)--(1,-3);
\draw[blue] (0,0)--(0,-0.9);
\draw[blue] (0,-1.1)--(0,-2);
\draw[green] (-1,-3)--(0,-3);
\draw[red] (1,-3)--(0,-3);
\draw[blue] (-1,-3) arc(180:360:1);
\draw (-0.5,0.5) node{$1$};
\draw (0.5,0.5) node{$3$};
\draw (0,-0.5) node{$2$};
\draw (0,-1.5) node{$4$};
\draw (-0.25,-2.25) node{$5$};
\draw (0.25,-2.25) node{$6$};
\draw (-0.75,-2.75) node{$7$};
\draw (-0.75,-3.7) node{$8$};
\draw (0.75,-3.7) node{$10$};
\draw (-0.5,-3) node{$9$};
\draw (0.5,-3) node{$12$};
\draw (0.75,-2.75) node{$11$};
\end{tikzpicture}
\end{subfigure}
\begin{subfigure}{.2\textwidth}
\centering
\begin{tikzpicture}
\draw (-2,0)--(2,0);
\draw (1.9,0.1)--(2,0);
\draw (1.9,-0.1)--(2,0);
\draw (0,0.5) node{$(f,\phi)_{6,2}$};
\end{tikzpicture}
\end{subfigure}
\begin{subfigure}{.4\textwidth}
\centering
\begin{tikzpicture}
\draw[fill=black] (0,0) circle (0.1);
\draw (0,1) circle (0.1);
\draw (0,-1) circle (0.1);
\draw[red] (0,1) arc(90:270:0.5);
\draw[green] (0,1) arc(90:-90:0.5);
\draw[blue] (0,0)--(0,-0.9);
\draw (-0.5,0.5) node{$1$};
\draw (0.5,0.5) node{$3$};
\draw (0,-0.5) node{$2$};
\end{tikzpicture}
\end{subfigure}
\caption{
{\sf The canonical morphism $(f,\phi)_{6,2}:\bip_{0,6}\rightarrow\bip_{0,2}$, illustrating Proposition \ref{prop:Can}}.
\label{f:egMorph}
}
\end{figure}
\end{example}

%%%%%%%%%%%%%%%%%%%%%%%%%%%%%
\subsubsection{Naming the edges}
Theorem \ref{thm1} implies that one can choose representatives of the elements of $\He(N)\backslash\He(1)$ as a set of pairs of coprime numbers in $[|0,N-1|]$.

When $N=p^{\alpha}$ with $p$ a prime number and $\alpha\in\mathbb{N}_{>0}$, we have seen that
$$\{(a,1),(1,b)|\ a\in\mathbb{Z}/p^\alpha\mathbb{Z},\ b\in p\mathbb{Z}/p^\alpha\mathbb{Z}\}$$
conveniently represents the points of $\mathbb{P}^1(\mathbb{Z}/p^\alpha\mathbb{Z})$. 

As already emphasized, there is in general no natural choice of representatives. However, it is easy to construct such a set of representatives, since Remark \ref{rem5} implies that:
$$[c:d]\cdot x_{0,N}=[d:-c]$$
$$[c:d]\cdot y_{0,N}=[d:-(c+d)]$$
Hence one can built $\bip_{0,N}$ edge by edge, in a very hands-on way. 

\begin{example}
Let us draw $\bip_{0,11}$. We could use the special set of representatives listed above since $11$ is prime, however, we will construct the dessin directly to illustrate the general case. Let us start with the projective lattice $L_{11}$ (which corresponds to $[0:1]$), and compute:

$$\left.\begin{array}{ccc} [0:1]x_{0,11}=[1:0] & [0:1]y_{0,11}=[1:-1]=[10:1] & [0:1]y^2_{0,11}=[1:0] \\
\ [10:1]x_{0,11}=[1:1] & [1:1]y_{0,11}=[1:-2]=[5:1] & [1:1]y^2_{0,11}=[1:-6]=[9:1] \\
\ [5:1]x_{0,11}=[2:1] & [2:1]y_{0,11}=[7:1] & [2:1]y^2_{0,11}=[4:1] \\
\ [9:1]x_{0,11}=[6:1] & [6:1]y_{0,11}=[3:1] & [6:1]y^2_{0,11}=[8:1] \end{array}\right.$$

This is enough to completely determine $\bip_{0,11}$: it has two faces, corresponding to the cycles $([1:0])$ and $([a:1])_{a\in[|0,10|]}$, and its genus is $1$. The corresponding bipartite fat graph is given in Figure \ref{f:egEdges}.

\begin{figure}[h!]
\centering
\begin{subfigure}{0.5\textwidth}
\begin{tikzpicture}
\draw[fill=black] (0,0) circle (0.1);
\draw[fill=black] (0,-2) circle (0.1);
\draw (0,1) circle (0.1);
\draw (0,-1) circle (0.1);
\draw[fill=black] (1,-3) circle (0.1);
\draw[fill=black] (-1,-3) circle (0.1);
\draw (0,0.5) circle(0.5);
\draw (-0.55,0.5) node{$[0:1]$};
\draw (0.5,-2.5) circle(0.1);
\draw (-0.5,-2.5) circle(0.1);
\draw (0,-2)--(-0.94,-2.94);
\draw (0,-2)--(0.94,-2.94);
\draw (0,0)--(0,-0.9);
\draw (0,-1.1)--(0,-2);
\draw (-1,-3)--(0,-4);
\draw (1,-3)--(0.1,-3.9);
\draw (0,-4) arc (-135:45:0.71);
\draw (-0.1,-4.1) arc (-50:-230:0.71);
\draw (-1.1,-4) circle (0.1);
\draw (1,-4) circle (0.1);

\draw (1.8,-1.95)--(2.2,-1.95);
\draw (1.8,-2.05)--(2.2,-2.05);

\draw[fill=black] (4,0) circle (0.1);
\draw[fill=black] (4,-2) circle (0.1);
\draw (4,1) circle (0.1);
\draw (4,-1) circle (0.1);
\draw[fill=black] (5,-3) circle (0.1);
\draw[fill=black] (3,-3) circle (0.1);
\draw (4,0.5) circle(0.5);
\draw (3.45,0.5) node{$[0:1]$};
\draw (4.5,-2.5) circle(0.1);
\draw (3.5,-2.5) circle(0.1);
\draw (4,-2)--(3.06,-2.94);
\draw (4,-2)--(4.94,-2.94);
\draw (4,0)--(4,-0.9);
\draw (4,-1.1)--(4,-2);
\draw (5,-3)--(4,-4);
\draw (3,-3)--(3.9,-3.9);
\draw (4,-4) arc (-45:-225:0.71);
\draw (4.1,-4.1) arc (-130:45:0.71);
\draw (3,-4) circle (0.1);
\draw (5.1,-4) circle (0.1);

\end{tikzpicture}
\end{subfigure}
\begin{subfigure}{0.3\textwidth}
\begin{tikzpicture}
\draw[dashed] (0,0)--(4,0);
\draw[dashed] (4,0)--(4,4);
\draw[dashed] (4,4)--(0,4);
\draw[dashed] (0,4)--(0,0);
\draw[fill=black] (1,1) circle (0.1);
\draw[fill=black] (2,2) circle (0.1);
\draw[fill=black] (3,3) circle (0.1);
\draw[fill=black] (1,3) circle (0.1);
\draw (0,2) circle (0.1);
\draw (4,2) circle (0.1);
\draw (2,0) circle (0.1);
\draw (2,4) circle (0.1);
\draw (1.5,1.5) circle (0.1);
\draw (2.5,2.5) circle (0.1);
\draw (0.5,3.5) circle (0.1);
\draw (1.5,2.5) circle (0.1);
\draw (0,2)--(2,0);
\draw (4,2)--(2,4);
\draw (1,1)--(3,3);
\draw (2,2)--(1,3);
\draw (0.75,3.25) circle(0.35);
\end{tikzpicture}
\end{subfigure}
\caption{
{\sf The bipartite fat graph corresponding to $\bip_{0,11}$.
}
\label{f:egEdges}}
\end{figure}
\end{example}

%%%%%%%%%%%%%%%%%%%%%%%%%%%%%%%%

\subsubsection{Interpretation of the Hecke dessins in terms of lattices} 
We know that the set of edges of the universal bipartite map $\bip_\infty$ of type $(2,3,\infty)$ is the set of projective bases for the projective lattice $L_1$. Choose a projective lattice $N$-hyperdistant from $L_1$, say, $L_N$. It corresponds to the following coset in $\psl\backslash\PGL_2^+(\mathbb{Q})$:
$$\psl\cdot\left(\begin{array}{cc} N & 0 \\ 0 & 1 \end{array}\right)\ .$$
Under a projective linear transformation of the vector space $V$ preserving $L_1$ (i.e., under the right multiplication by a matrix in $\psl$), $L_N$ is mapped to a projective lattice $N$-hyperdistant from $L_1$, which is a priori different from $L_N$.
 
Since any matrix in $\psl$ can be written as a product of $S$'s and $U$'s, the dessin d'enfant corresponding to $\He(N)$ describes how these ``elementary'' projective transformations act on the set $\La_1^N$:
\begin{itemize}
\item there is a bijection between the set of edges in $\bip_{0,N}$ and the set $\La_1^N$,
\item if one right-multiplies the class corresponding to a projective lattice by S (resp. U), one obtains the class corresponding to the projective lattice associated with the edge directly after the one we started with, with respect to the white (resp. black) end of the latter.
\end{itemize}
In the next subsection we study the cusps of the $\bip_{0,N}$, i.e the cycles of the permutation $y_{0,N}x_{0,N}$. In terms of projective lattices, a cusp is a cycle for the projective transformation $US$ acting on $\La_1^N$.

%%%%%%%%%%%%%%%%%%%%%%%%%%%%%%%%%%%%%%%%%%

\subsection{Torsion points, cusps and genus of the $\bip_{0,N}$}\label{subcusps}

\subsubsection{Torsion points of order $2$}

\begin{definition}
	The torsion points of order $2$ in $\bip_{0,N}$ are the one-valent white vertices of $\bip_{0,N}$. 
\end{definition}

Let $c,d\in[|0,N-1|]$ be coprimes, and such that $[c:d]$ corresponds to the edge terminating at such a torsion point of order $2$. Since the latter is one-valent, we know that: 

$$[c:d]\cdot x_{0,N}=[d:-c]=[c:d]\ ,$$

hence there exists $k\in[|0,N-1|]$ satisfying $\gcd(k,N)=1$, and such that $c=kd$ and $d=-kc$ in $\mathbb{Z}/N\mathbb{Z}$. This implies $-(c,d)=k^2(c,d)$, and $k^2=-1$ (using Bezout's identity). Therefore, if $-1$ is not a quadratic residue modulo $N$, there cannot be any white vertex of valency one in $\bip_{0,N}$. One can refine this analysis into an actual counting of the number of torsion points of order $2$ in $\bip_{0,N}$, as follows.

\paragraph{The case $N=p^\alpha$ with $p>2$}
Consider the case $N=p^\alpha$ where $p>2$ is a prime number, and $\alpha\in\mathbb{N}_{>0}$. In that case, the representatives $(c,d)\in[|0,N-1|]^2$ of the points of $\mathbb{P}^1(\mathbb{Z}/N\mathbb{Z})$ have at least one coordinate which is coprime with $p^\alpha$, since $\gcd(c,d,p^\alpha)=1$, hence the representative of any edge can be chosen of the form $(c,1)$, with $c\in\mathbb{Z}/p^\alpha\mathbb{Z}$, as already explained above.

Let us assume that the white end of the edge $[c:1]$ is of one-valent. Right-multiplication by $x_{0,p^\alpha}$ yields $[1:-c]$, which has to be the same point as $[c:1]$, because of the assumption on the valency of the white end. Hence $c^2=-1$ in $\mathbb{Z}/p^\alpha\mathbb{Z}$, which implies that the order of $c$in the group $(\mathbb{Z}/p^\alpha\mathbb{Z})^\times$ is $4$.

It is a classical result that:
$$(\mathbb{Z}/p^\alpha\mathbb{Z})^\times\simeq(\mathbb{Z}/(p-1)p^{\alpha-1}\mathbb{Z})\ ,$$
 though not canonically. Anyways, since this group is cyclic, the equation $x^2=-1$ has exactly two solutions if and only if $4|(p-1)p^{\alpha-1}$, that is, if and only if $p\equiv 1 [4]$.

\paragraph{The case $N=2^\alpha$}
Now consider the case $p=2$, and $\alpha\in\mathbb{N}_{>0}$. 
\begin{itemize}
\item If $\alpha=1$, the group $(\mathbb{Z}/2\mathbb{Z})^\times$ is trivial. Hence the equation $x^2=-1=1$ has $x=1$ as unique solution. 
\item Assume nom that $\alpha>1$. The invertibles in $\mathbb{Z}/2^\alpha\mathbb{Z}\simeq[|0,2^\alpha-1|]$ are the odd numbers. A square root of $-1$ hence corresponds to a solution of the equation
$$(2y+1)^2=l2^\alpha-1\ ,$$
for some $y,l\in\mathbb{Z}$. This is equivalent to $4k^2+4k+1=l2^{\alpha}-1$, and hence to $2k^2+2k=l2^{\alpha-1}-1$. This equation has no solution in $\mathbb{Z}$ since we assumed that $\alpha>1$.
\end{itemize}

\paragraph{General $N$}
Let $N\in\mathbb{N}_{>0}$, and decompose $N$ in prime factors: $N=\prod_i p_i^{\alpha_i}$. The Chinese remainder theorem states that 
$$\mathbb{Z}/N\mathbb{Z}\simeq\prod_i\mathbb{Z}/p_i^{\alpha_i}\mathbb{Z}\ ,$$
hence $x\in(\mathbb{Z}/N\mathbb{Z})$ satisfies $x^2=-1$ if and only if $(\mathrm{red}_{p_i^{\alpha_i}}(x))^2=-1$ in $\mathbb{Z}/p_i^{\alpha_i}\mathbb{Z}$ for all $i$. Conversely, remember that for coprimes $M$ and $N$, one has $E_{0,MN}=E_{0,M}\times E_{0,N}$, hence any tuple $(x_i)$ such that for all $i$, $x_i^2\equiv-1$ mod. $p_i^{\alpha_i}$, corresponds to a solution of the equation $x^2=-1$ in $(\mathbb{Z}/N\mathbb{Z})$. We have then proved the following:

\begin{proposition}
Let $N\in\mathbb{N}_{>0}$, and decompose it in prime factors:
 
$$N=2^a\times\prod_{i=1}^n p_i^{\alpha_i},\ \forall i\in[|1,n|],\ \alpha_i>0\ .$$ 

Then $\bip_{0,N}$ has torsion points of order $2$ if and only if $a\leq1$ and for all $i\in[|1,n|]$, $p_i\equiv1$ modulo 4, and $\alpha_i=1$. In that case there are exactly $2^n$ different solutions to the equation $x^2=-1$ in $(\mathbb{Z}/N\mathbb{Z})$, or equivalently, $\bip_{0,N}$ has exactly $2^n$ torsion points of order $2$. 
\end{proposition}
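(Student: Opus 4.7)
The plan is to reduce the counting of torsion points of order $2$ to counting solutions of $x^{2}=-1$ in $\mathbb{Z}/N\mathbb{Z}$, and then to combine via the Chinese remainder theorem the two prime-power analyses already carried out in this subsection.

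First I would make the identification between order-$2$ torsion points of $\bip_{0,N}$ and elements of
$$R_{N}:=\{k\in\mathbb{Z}/N\mathbb{Z}\,:\,k^{2}=-1\}$$
precise. A one-valent white vertex corresponds to an edge $[c:d]$ with $[c:d]\cdot x_{0,N}=[d:-c]=[c:d]$, i.e.\ $(d,-c)=k\cdot(c,d)$ for some unit $k\in(\mathbb{Z}/N\mathbb{Z})^{\times}$. The two resulting relations $d=kc$ and $-c=kd$ give $(k^{2}+1)(c,d)=(0,0)$; coprimality $\gcd(c,d,N)=1$ forces $k^{2}=-1$, and the relation $d=kc$ with $k$ a unit forces $c$ (hence $d$) to be a unit as well, so that $[c:d]$ can be normalised uniquely to $[1:k]$. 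Conversely, each $k\in R_{N}$ gives a well-defined fixed point $[1:k]$, and distinct $k$'s give distinct points. Hence the number of torsion points of order $2$ equals $|R_{N}|$.

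Next I would decompose $|R_{N}|$ by the Chinese remainder theorem applied to $N=2^{a}\prod_{i=1}^{n}p_{i}^{\alpha_{i}}$. The isomorphism $\mathbb{Z}/N\mathbb{Z}\simeq(\mathbb{Z}/2^{a}\mathbb{Z})\times\prod_{i}(\mathbb{Z}/p_{i}^{\alpha_{i}}\mathbb{Z})$ turns $x^{2}=-1$ into independent congruences in each factor, so that
$$|R_{N}|=|R_{2^{a}}|\cdot\prod_{i=1}^{n}|R_{p_{i}^{\alpha_{i}}}|.$$
The local counts are exactly those already established: $|R_{2^{a}}|=1$ when $a\in\{0,1\}$ and $|R_{2^{a}}|=0$ when $a\geq 2$; and for each odd prime power $p^{\alpha}$, the cyclicity of $(\mathbb{Z}/p^{\alpha}\mathbb{Z})^{\times}$ combined with the divisibility criterion $4\mid (p-1)p^{\alpha-1}$ gives $|R_{p^{\alpha}}|=2$ when $p\equiv 1\pmod 4$ and $0$ otherwise; in particular, in the case $\alpha_{i}=1$ featured in the statement, each surviving factor contributes exactly $2$.

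Multiplying the local contributions yields the characterisation: the product vanishes unless $a\leq 1$ and every odd prime divisor $p_{i}$ of $N$ satisfies $p_{i}\equiv 1\pmod 4$, in which case $|R_{N}|=\prod_{i=1}^{n}2=2^{n}$. The only delicate point in the argument is the first step, namely verifying that coprimality of $(c,d)$ with $N$ together with $d=kc$ really does force $c$ and $d$ to be units, so that the normalisation $[c:d]\mapsto[1:k]$ is a genuine bijection onto $R_{N}$; once that is in place, the Chinese remainder theorem and the two prime-power computations already recorded in the excerpt glue the local results into the stated global one immediately.
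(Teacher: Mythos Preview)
Your argument is correct and matches the paper's approach: both establish that order-$2$ torsion points biject with solutions of $k^{2}=-1$ in $\mathbb{Z}/N\mathbb{Z}$, then invoke the Chinese remainder theorem to reduce to the prime-power computations already carried out in the subsection. Your normalisation $[c:d]=[1:k]$ makes the bijection with $R_{N}$ slightly more explicit than the paper does, and your aside that the hypothesis $\alpha_{i}=1$ is superfluous is also correct (the paper's own odd-prime-power analysis yields two solutions for every $\alpha\geq 1$ whenever $p\equiv 1\pmod 4$).
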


\subsubsection{Torsion points of order $3$}

\begin{definition}
	The torsion points of order $3$ in $\bip_{0,N}$ are the one-valent black vertices of $\bip_{0,N}$. 
\end{definition}

Let $c,d\in[|0,N-1|]$ coprimes such that $[c:d]$ is the edge terminating at such a torsion point of order $3$. Since the latter is a one-valent vertex, we know that:

$$[c:d]\cdot y_{0,N}=[d:-(c+d)]=[c:d]$$

hence there exists $k\in[|0,N-1|]$ satisfying $\gcd(k,N)=1$, and such that $c=kd$ and $d=-k(c+d)$ in $(\mathbb{Z}/N\mathbb{Z})$. This implies that $(k^2+k+1)c=0$, together with $(k^2+k+1)d=0$, and again thanks to Bezout's identity: $k^2+k+1=0$.
Multiplying both sides of $k^2+k+1=0$ by $k$ yields $k^3=1$, but in general, $k^3=1$ does not imply $k^2+k+1=0$ in $\mathbb{Z}/N\mathbb{Z}$. However, we're going to be looking at the third roots of $1$, and among them, which ones are solutions of $k^2+k+1=0$. \\

Let $N\in\mathbb{N}_{>0}$ be a power of a prime: $N=p^\alpha$. There cannot be any solution of the equation $k^2+k+1=0$ in $\mathbb{Z}/p^\alpha\mathbb{Z}$ if this ring does not admit any third roots of $1$, and because we know the cyclic structure of the group of invertibles in $\mathbb{Z}/p^\alpha\mathbb{Z}$, we can conclude that the prime $p$ has to be either $3$ or congruent to $1$ modulo $3$. Let $k$ be such that $k^3=1$, and set $a=k^2+k+1\in\mathbb{Z}/p^\alpha\mathbb{Z}$. Multiplying both sides with $k$ yields $ka=k^3+(a-1)=a$, hence $(k-1)a=0$.

\paragraph{The case $N=p^{\alpha}$ for $p>3$}

Let $p>3$ be a prime, and let $N=p^\alpha$, with $\alpha\geq1$. 
Since $N>3$, $k=1$ is not a solution of $k^2+k+1=0$, hence one must consider the other third roots of $1$, if any. Suppose that $p\equiv1\ \mathrm{mod.}\ 3$. Then, from the structure of the group $(\mathbb{Z}/p^\alpha\mathbb{Z})^\times$, we know that there are two third roots of $1$ which are not $1$. Let $k$ be such a root. Then:
\begin{itemize}
\item Either $(k-1)$ is invertible, in which case $a=k^2+k+1$ has to be zero, since $(k-1)a=0$.
\item Otherwise, $(k-1)$ is not invertible, i.e. $k=pk+1$. Then $a=k^2+k+1=p^2k^2+3pk+3$. We assumed that $p>3$, thus $a$ is invertible, and subsequently $k=1$, which contradicts our initial hypothesis.
\end{itemize}
Hence the non-trivial third roots of $1$ satisfy $k^2+k+1=0$.

\paragraph{The case $N=3^{\alpha}$}

For $\alpha=1$ the trivial case $k=1$ is the only solution of $k^2+k+1=0$, and we assume now that $\alpha>1$. 

One can check that $k_1=(1+3^{\alpha-1})$ and $k_2=(1-3^{\alpha-1})$ are the two non-trivial third roots of $1$, and that $k_1^2+k_1+1=k_2^2+k_2+1=3$. Hence if $\alpha>1$ the equation $k^2+k+1=0$ has no solution on $\mathbb{Z}/3^\alpha\mathbb{Z}$.

\paragraph{General $N$}
Eventually, consider any $N\in\mathbb{N}_{>0}$, and decompose $N$ in prime factors: $N=\prod_i p_i^{\alpha_i}$. The Chinese remainder theorem states that 
$$\mathbb{Z}/N\mathbb{Z}\simeq\prod_i\mathbb{Z}/p_i^{\alpha_i}\mathbb{Z}\ ,$$
hence $x\in\mathbb{Z}/N\mathbb{Z}$ satisfies $x^2+x+1=0$ if and only if $\mathrm{red}_{p_i^{\alpha_i}}(x)$ satisfies this equation in $\mathbb{Z}/p_i^{\alpha_i}\mathbb{Z}$ for all $i$. Conversely, remember that for coprimes $M$ and $N$, one has $E_{0,MN}=E_{0,M}\times E_{0,N}$ hence any tuple $(x_i)$ such that for all $i$, $x_i^2+x_i+1=0$ mod. $p_i^{\alpha_i}$, corresponds to a solution in $(\mathbb{Z}/N\mathbb{Z})$. Hence one has the following

\begin{proposition}
Let $N\in\mathbb{N}_{>0}$, and decompose it in prime factors 
$$N=3^a\times\prod_{i=1}^n p_i^{\alpha_i},\ \forall i\in[|1,n|],\ \alpha_i>0\ .$$ 
Then $\bip_{0,N}$ has torsion points of order $3$ if and only if $a\leq1$ and for all $i\in[|1,n|]$, $p_i\equiv1$ modulo 3. In that case there are exactly $2^n$ different solutions in $\mathbb{Z}/N\mathbb{Z}$ to the equation $x^2+x+1=0$, and equivalently, $\bip_{0,N}$ has exactly $2^n$ torsion points of order $3$. 
\end{proposition}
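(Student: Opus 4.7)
The statement collects the prime-power analysis just carried out and glues it together via the Chinese Remainder Theorem, so the plan is to (i) count solutions of $x^2+x+1=0$ in each local factor $\mathbb{Z}/p_i^{\alpha_i}\mathbb{Z}$ and in $\mathbb{Z}/3^a\mathbb{Z}$, (ii) multiply via CRT, and (iii) transport the count to $\bip_{0,N}$ using the identification of torsion points of order $3$ with solutions of $x^2+x+1=0$ established in the paragraph preceding the proposition.

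First I would write $N=3^a\prod_i p_i^{\alpha_i}$ with $p_i\neq 3$, and invoke
\[
\mathbb{Z}/N\mathbb{Z}\simeq(\mathbb{Z}/3^a\mathbb{Z})\times\prod_i\mathbb{Z}/p_i^{\alpha_i}\mathbb{Z}\ .
\]
Under this isomorphism a solution of $x^2+x+1=0$ in $\mathbb{Z}/N\mathbb{Z}$ corresponds to a tuple of local solutions, one in each factor, so the total number of solutions is the product of the local counts. Thus the proposition reduces to proving that the local count is: $1$ if the factor is $\mathbb{Z}/\mathbb{Z}$ (i.e. $a=0$), $1$ if the factor is $\mathbb{Z}/3\mathbb{Z}$ (i.e. $a=1$), $0$ if the factor is $\mathbb{Z}/3^a\mathbb{Z}$ with $a\geq 2$, $2$ if the factor is $\mathbb{Z}/p_i^{\alpha_i}\mathbb{Z}$ with $p_i\equiv 1\pmod 3$, and $0$ if $p_i\equiv 2\pmod 3$.

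Each of these local counts has essentially been established above the statement. For $p=3^a$ with $a\leq 1$ a direct check gives the count ($k=1$ is the unique solution modulo $3$, and mod $1$ is vacuous). For $a\geq 2$ the computation carried out on $k_1=1+3^{\alpha-1}$ and $k_2=1-3^{\alpha-1}$ shows that the only non-trivial cube roots of unity satisfy $k^2+k+1\equiv 3\not\equiv 0$, ruling out a solution. For $p_i\equiv 2\pmod 3$ the cyclic group $(\mathbb{Z}/p_i^{\alpha_i}\mathbb{Z})^\times$ has order $(p_i-1)p_i^{\alpha_i-1}$ which is not divisible by $3$, so there are no non-trivial cube roots of unity and hence no solution (since $k=1$ gives $3\not\equiv 0$). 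For $p_i\equiv 1\pmod 3$, $(\mathbb{Z}/p_i^{\alpha_i}\mathbb{Z})^\times$ is cyclic of order divisible by $3$, and the preceding dichotomy on $(k-1)$ shows the two non-trivial cube roots of unity satisfy $k^2+k+1=0$.

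Multiplying the local counts shows that the equation $x^2+x+1=0$ has no solution unless $a\leq 1$ and $p_i\equiv 1\pmod 3$ for every $i$, and in that case the total number of solutions is exactly $2^n$. By the identification of torsion points of order $3$ in $\bip_{0,N}$ with solutions of this equation, the same holds for the number of one-valent black vertices of $\bip_{0,N}$. The only slightly delicate point is confirming the higher prime-power case for $p\equiv 1\pmod 3$ (that there are no \emph{extra} solutions coming from non-invertible $k-1$), which is handled exactly as for $p>3$ in the preceding discussion, so there should be no genuine obstacle.
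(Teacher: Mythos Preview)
Your proposal is correct and follows essentially the same route as the paper: the paper likewise reduces to the prime-power analysis already carried out and then invokes the Chinese Remainder Theorem (together with the product decomposition $E_{0,MN}=E_{0,M}\times E_{0,N}$) to assemble the global count. One cosmetic slip: for $p_i=2$ and $\alpha_i\geq 3$ the group $(\mathbb{Z}/p_i^{\alpha_i}\mathbb{Z})^\times$ is not cyclic, but your argument only uses that its order is coprime to $3$, so nothing is affected.
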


\subsubsection{Description of the cusps and their width}

Recall that the cusps of $\bip_{0,N}$ are the cycles of the permutation $y_{0,N}x_{0,N}$. Let $\mathcal{C}(N)$ be the set of cusps in $\bip_{0,N}$.

Note that
$y_{0,1}x_{0,1}=\left[\begin{array}{cc} 1 & 0 \\ 1 & 1 \end{array}\right]$,
which implies
$[c:d]\cdot(y_{0,N}x_{0,N})=[c+d:d]$.
The case $N=8$ is partly studied as an example in Example \ref{f:B08} below.

\begin{example}\label{f:B08}
Let $N=8$, and choose the set of representatives $\{(c,1),\ c\in\mathbb{Z}/8\mathbb{Z}\}\cup\{(1,c),\ c\in2\mathbb{Z}/8\mathbb{Z}\}$ for the homogeneous coordinates on $\mathbb{P}^1(\mathbb{Z}/8\mathbb{Z})$. Consider the projective lattice $8$-hyperdistant from $L_1$ corresponding to the homogeneous coordinate $[1:6]$. The right-action of $US$ yields the projective lattice corresponding to 
$$[6+1:6]=[-1:6]=[1:-6]=[1:2] \ ,$$ 
and $[1:2]\cdot US=[1:6]$. 
Hence the ``central'' cusp in $\He(8)$ is the cycle of projective lattices corresponding to $([1:6],[1:2])$. One can compute that they are the projective lattices $(L_{1/8,3/4},L_{1/8,1/4})$.

\begin{figure}[h!]
\centering
\begin{tikzpicture}[scale=0.7]
\draw[fill=black] (0,-2) circle (0.1);
\draw[fill=black] (0,-4) circle (0.1);
\draw[fill=black] (0,-8) circle (0.1);
\draw[fill=black] (0,-6) circle (0.1);
\draw (0,-10) circle (0.1);
\draw (0,0) circle (0.1);
\draw (0,-3) circle (0.1);
\draw (1,-5) circle (0.1);
\draw (-1,-5) circle (0.1);
\draw (0,-7) circle (0.1);
\draw (0,-10) circle (0.1);
\draw (0,-1) circle(1);
\draw (0,-5) circle(1);
\draw (0,-9) circle(1);
\draw (0,-2)--(0,-4);
\draw (0,-6)--(0,-8);
\draw (-1.1,-1) node{$[0:1]$};
\draw (0.1,-3.5) node{$[1:1]$};
\draw (-1.1,-5.5) node{$[2:1]$};
\draw (0.1,-7.5) node{$[3:1]$};
\draw (1.1,-9) node{$[4:1]$};
\draw (0.1,-6.5) node{$[5:1]$};
\draw (1.1,-4.5) node{$[6:1]$};
\draw (0.1,-2.5) node{$[7:1]$};
\draw (1.1,-1) node{$[1:0]$};
\draw (-1.1,-4.5) node{$[1:6]$};
\draw (-1.1,-9) node{$[1:4]$};
\draw (1.1,-5.5) node{$[1:2]$};
\end{tikzpicture}
\caption{{\sf The dessin d'enfant $\bip_{0,8}$.}
}
\end{figure}
\end{example}

\begin{definition}
The width function
$$w:\mathcal{C}(N)\rightarrow\mathbb{N}$$
associates to each cusp $c\in\mathcal{C}(N)$ the length of the corresponding cycle in the decomposition of $y_{0,N}x_{0,N}$ in disjoint cycles. 
\end{definition}

\begin{proposition}\label{prop:sumW}
$$\sum_{c\in\mathcal{C}(N)} w(c)=|\mathbb{P}^1(\mathbb{Z}/N\mathbb{Z})|=N\prod_{p|N}(1+\frac{1}{p}) \ , $$
where the last product runs over the prime numbers dividing $N$. 
\end{proposition}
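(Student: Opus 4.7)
The plan is very short because the proposition unpacks almost immediately into two facts already established in the excerpt. First, recall that the set of edges $E_{0,N}$ of $\bip_{0,N}$ is by definition $\He(N)\backslash\He(1)$, which by the chain of bijections culminating in Theorem \ref{thm1} is in bijection with $\mathbb{P}^1(\mathbb{Z}/N\mathbb{Z})$, and whose cardinality was computed in the previous subsection as $N\prod_{p|N}(1+\tfrac{1}{p})$. Second, a cusp of $\bip_{0,N}$ is by definition a cycle in the decomposition of $y_{0,N}x_{0,N}\in\mathfrak{S}(E_{0,N})$ into disjoint cycles, and the width of that cusp is the length of that cycle.

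I would then simply invoke the fact that the disjoint cycles of any permutation of a finite set $E$ form a partition of $E$, so that the sum of the cycle lengths equals $|E|$. Applying this to the permutation $y_{0,N}x_{0,N}$ acting on $E_{0,N}$ gives
\[
\sum_{c\in\mathcal{C}(N)} w(c) \;=\; |E_{0,N}| \;=\; |\mathbb{P}^1(\mathbb{Z}/N\mathbb{Z})|,
\]
and the closed form $N\prod_{p|N}(1+\tfrac{1}{p})$ is then just the counting formula proved just before this subsection.

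There is no real obstacle here: the statement is a tautological bookkeeping observation, combined with the already-proven cardinality formula for $\mathbb{P}^1(\mathbb{Z}/N\mathbb{Z})$. The only thing to be careful about is that $y_{0,N}x_{0,N}$ is genuinely a permutation of the finite set $E_{0,N}$ (so that its cycle decomposition is well-defined and its cycles partition $E_{0,N}$); this is immediate because $\bip_{0,N}$ is a dessin d'enfant, i.e. $E_{0,N}$ is finite, and $x_{0,N},y_{0,N}\in\mathfrak{S}(E_{0,N})$ by construction of the quotient ABM $\He(N)\backslash\bip_\infty$.
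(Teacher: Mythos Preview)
Your proof is correct and matches the paper's own argument essentially verbatim: the paper simply observes that the sum of the cycle lengths of $y_{0,N}x_{0,N}$ equals $|E_{0,N}|$, which was already identified with $|\mathbb{P}^1(\mathbb{Z}/N\mathbb{Z})|$ and computed as $N\prod_{p\mid N}(1+\tfrac{1}{p})$.
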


\begin{proof}
The sum of the length of all the cycles in the decomposition of the permutation $y_{0,N}x_{0,N}$ is the cardinality of $E_{0,N}$ which has already been shown to be $|\mathbb{P}^1(\mathbb{Z}/N\mathbb{Z})|$.
\end{proof}

\begin{definition}\label{prop:w2}
Let $N>1$ be an integer. Then $\bip_{0,N}$ has two special cusps denoted $c_\infty$ and $c_0$, with width $w(c_\infty)=1$ and $w(c_0)=N$. 
\end{definition}

\begin{proof}
The cusp $c_\infty$ is the singleton $\{[1:0]\}$, which is a cusp of width $1$ since
$$[1:0]\cdot \left[\begin{array}{cc} 1 & 0 \\ 1 & 1 \end{array}\right]=[1:0] \ .$$

Let now $c_0$ be the cusp defined as the one containing the edge $[0:1]$. Since
$$[0:1]\cdot \left[\begin{array}{cc} 1 & 0 \\ 1 & 1 \end{array}\right]^k=[k:1] \ ,$$
and since $[N:1]=[0:1]$, the cusp $c_0$ is the cycle $([0:1],[1:1],...,[N-1:1])$ and has width $N$.
\end{proof}

Proposition \ref{prop:sumW} and definition \ref{prop:w2} imply 

\begin{corollary}
Let $N=p$ a prime number. Then $\mathcal{C}(p)=\{c_0,c_\infty\}$.
\end{corollary}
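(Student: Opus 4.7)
The plan is to combine the width-sum identity from Proposition \ref{prop:sumW} with the two distinguished cusps $c_0$ and $c_\infty$ exhibited in Proposition \ref{prop:w2}, and observe that these two cusps already saturate the total width when $N=p$ is prime.

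More precisely, I would first specialize Proposition \ref{prop:sumW} to $N=p$ prime: since the only prime dividing $p$ is $p$ itself, the formula collapses to
\[
\sum_{c\in\mathcal{C}(p)} w(c) = p\left(1+\frac{1}{p}\right) = p+1.
\]
Next I would recall from Proposition \ref{prop:w2} that $\bip_{0,p}$ always contains the cusp $c_\infty=\{[1:0]\}$ of width $1$ and the cusp $c_0=([0:1],[1:1],\dots,[p-1:1])$ of width $p$, and that these two cusps are distinct (they contain different edges). Their widths sum to $1+p=p+1$.

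Combining these two facts, the total width contributed by $c_0$ and $c_\infty$ alone already equals the full sum $\sum_{c\in\mathcal{C}(p)} w(c)$. Since the width function is a positive integer on every cusp and the cusps partition the edge set $E_{0,p}=\mathbb{P}^1(\mathbb{Z}/p\mathbb{Z})$, there is no room for any further cusp. Hence $\mathcal{C}(p)=\{c_0,c_\infty\}$, as claimed.

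There is no real obstacle here — the statement is an immediate numerical corollary of the two preceding results. The only thing worth checking carefully is that $c_0$ and $c_\infty$ are genuinely distinct cusps, which is clear because $[1:0]\notin\{[k:1]:0\le k\le p-1\}$ in $\mathbb{P}^1(\mathbb{Z}/p\mathbb{Z})$.
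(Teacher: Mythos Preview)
Your proposal is correct and matches the paper's approach exactly: the corollary is stated immediately after Proposition~\ref{prop:sumW} and Definition~\ref{prop:w2} as a direct consequence of them, via precisely the width-count argument you give. There is nothing to add.
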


\begin{proposition}\label{prop:desccusps}
Let $N=p^\alpha$ with $p$ prime. Then:
$$|\mathcal{C}(p^\alpha)|=\sum_{0\leq k\leq\alpha} \phi(\gcd(p^k,p^{\alpha-k}))$$
where $\phi$ is Euler's totient function. Moreover, there is an explicit description of $\mathcal{C}(p^\alpha)$.
\end{proposition}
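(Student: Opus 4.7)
The plan is to exploit the representatives of Proposition \ref{prop:palpha}, which partition $\mathbb{P}^1(\mathbb{Z}/p^\alpha\mathbb{Z})$ into $\alpha+1$ strata $\Sigma_j$ indexed by $j\in\{0,1,\dots,\alpha\}$: namely $\Sigma_0 = \{[a:1] : a\in\mathbb{Z}/p^\alpha\mathbb{Z}\}$, $\Sigma_\alpha = \{[1:0]\}$, and $\Sigma_j = \{[1:p^j m] : m\in(\mathbb{Z}/p^{\alpha-j}\mathbb{Z})^\times\}$ for $1\leq j\leq\alpha-1$. First I would verify that the action $T : [c:d] \mapsto [c+d:d]$ preserves every stratum: on $\Sigma_0$ it is immediate, $[1:0]$ is fixed, and on $\Sigma_j$ with $j\geq 1$ the image $[1+p^jm:p^jm]$ renormalises (since $1+p^jm$ is a unit) to $[1:(1+p^jm)^{-1}p^jm]$, whose second entry still has $p$-adic valuation exactly $j$. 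This reduces the problem to counting $T$-orbits stratum by stratum.

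For $\Sigma_0$ and $\Sigma_\alpha$, Definition \ref{prop:w2} already yields one cusp each ($c_0$ of width $p^\alpha$ and $c_\infty$ of width $1$), matching $\phi(\gcd(p^0,p^\alpha))=\phi(\gcd(p^\alpha,p^0))=1$. The heart of the argument is the analysis of $\Sigma_j$ for $1\leq j\leq\alpha-1$, where the induced map on $m$ is $m\mapsto m(1+p^jm)^{-1}\pmod{p^{\alpha-j}}$. My main trick is the change of variable $n = m^{-1}\in(\mathbb{Z}/p^{\alpha-j}\mathbb{Z})^\times$, under which $1/m'= (1+p^jm)/m = n + p^j$, so the induced action becomes the affine translation $n\mapsto n+p^j$ on $(\mathbb{Z}/p^{\alpha-j}\mathbb{Z})^\times$ (well-defined because $p^j$ is a multiple of $p$ and therefore preserves the unit condition modulo $p^{\alpha-j}$).

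The orbits of translation by $p^j$ are the intersections of $(\mathbb{Z}/p^{\alpha-j}\mathbb{Z})^\times$ with cosets of the cyclic subgroup generated by $p^j$. If $2j\geq\alpha$ then $p^j\equiv 0\pmod{p^{\alpha-j}}$ and every unit is fixed, giving $\phi(p^{\alpha-j})$ cusps of width $1$; if $2j<\alpha$, the additive order of $p^j$ in $\mathbb{Z}/p^{\alpha-j}\mathbb{Z}$ is $p^{\alpha-2j}$ and cuts $(\mathbb{Z}/p^{\alpha-j}\mathbb{Z})^\times$ into $\phi(p^{\alpha-j})/p^{\alpha-2j} = \phi(p^j)$ orbits of common width $p^{\alpha-2j}$. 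In either regime the count is $\phi(p^{\min(j,\alpha-j)}) = \phi(\gcd(p^j,p^{\alpha-j}))$, and each cusp of $\Sigma_j$ is explicitly represented by the edge $[1:p^j n^{-1}]$ for a unit $n\in(\mathbb{Z}/p^{\alpha-j}\mathbb{Z})^\times$ running through a set of coset representatives modulo $p^{\min(j,\alpha-j)}$. Summing over $j=0,1,\dots,\alpha$ yields both the displayed formula and the promised explicit description of $\mathcal{C}(p^\alpha)$.

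The hard step is the substitution $n=m^{-1}$: without it, iterating $m\mapsto m(1+p^jm)^{-1}$ produces unwieldy geometric-series expansions in $p^j m$ and makes it nontrivial to see that the $T$-orbits inside $\Sigma_j$ coincide with the cosets of $p^j\mathbb{Z}/p^{\alpha-j}\mathbb{Z}$ restricted to the units. As a consistency check I would verify $\sum_{c\in\mathcal{C}(p^\alpha)} w(c) = p^\alpha + 1 + \sum_{j=1}^{\alpha-1}\phi(p^{\alpha-j}) = p^\alpha + p^{\alpha-1} = (p+1)p^{\alpha-1}$, which agrees with $|\mathbb{P}^1(\mathbb{Z}/p^\alpha\mathbb{Z})|$ from Proposition \ref{prop:sumW}.
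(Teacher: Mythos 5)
Your proof is correct and follows essentially the same route as the paper: stratify $\mathbb{P}^1(\mathbb{Z}/p^\alpha\mathbb{Z})$ by the $p$-adic valuation of the second homogeneous coordinate, treat $c_0$ and $c_\infty$ separately, and obtain the dichotomy $2j\leq\alpha$ versus $2j>\alpha$ yielding $\phi(\gcd(p^j,p^{\alpha-j}))$ cusps per stratum. The only difference is cosmetic: where you invert ($n=m^{-1}$) to linearise the dynamics into the translation $n\mapsto n+p^j$, the paper simply applies $\left[\begin{array}{cc} 1 & 0 \\ w & 1 \end{array}\right]$ to $[1:p^j\beta]$ in a single step and reads off the width as the least $w$ with $wp^{2j}\beta^2=0$ in $\mathbb{Z}/p^\alpha\mathbb{Z}$, so the geometric-series iteration you wanted to avoid never arises there either.
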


\begin{proof}
The points of $\mathbb{P}^1(\mathbb{Z}/p^\alpha\mathbb{Z})$ which are neither $[1:0]$ or of the form $[a:1]$ with $a\in\mathbb{Z}/p^\alpha\mathbb{Z}$ can be written 
$[1:p^k\beta]$
with $\gcd(\beta,p)=1$ and $0<k<\alpha$. 
Then:
$$[1:p^k\beta]\cdot\left[\begin{array}{cc} 1 & 0 \\ w & 1 \end{array}\right]=[1+wp^k\beta:p^k\beta]=[1:\frac{p^k\beta}{1+wp^k\beta}]\ .$$
The smallest $w$ such that 
\begin{equation}\label{eq:w}
\frac{p^k\beta}{1+wp^k\beta}=p^k\beta\ (\Leftrightarrow wp^{2k}\beta^2=0)
\end{equation}
holds in $\mathbb{Z}/p^\alpha\mathbb{Z}$, is the width of the cusp containing the projective lattice $[1:p^k\beta]$.

Now, since $\beta$ and $p$ are coprimes:
\begin{itemize}
\item either $2k\leq\alpha$, and then the smallest $w$ for which eq. \ref{eq:w} holds is $p^{\alpha-2k}$,
\item or $2k>\alpha$ then the smallest $w$ for which eq. \ref{eq:w} holds is $1$. 
\end{itemize}

In any case, $0\leq\beta\leq p^{\alpha-k}$, and since $\beta$ and $p$ are coprimes, there are $\phi(p^{\alpha-k})$ different possible $\beta$'s, hence: 
\begin{itemize}
\item either $2k\leq\alpha$, then there are 
$$\frac{\phi(p^{\alpha-k})}{p^{\alpha-2k}}=\frac{p^{\alpha-k-1(p-1)}}{p^{\alpha-2k}}=p^{k-1}(p+1)=\phi(\gcd(p^k,p^{\alpha-k}))$$
cusps containing points of the form $[1:p^k\beta]$, all of width $p^{\alpha-2k}$,
\item or $2k>\alpha$, and then there are $\phi(p^{\alpha-k})=\phi(\gcd(p^k,p^{\alpha-k}))$ cusps of width $1$. 
\end{itemize}

Eventually, the points of the form $[a:1]$ correspond to the special case $k=0$. They form a unique cusp $c_0$ of width $p^\alpha$. The point $[1:0]$ corresponds to $k=p^\alpha$ and form the cusp $c_\infty$ of width $1$. 

Hence we found that:
\begin{itemize}
\item for each value of $k$ in $[|0,\lfloor \alpha/2 \rfloor|]$, there are $\phi(\gcd(p^k,p^{\alpha-k}))$ cusps of width $p^{\alpha-2k}$,
\item for each value of $k$ in $[|\lfloor \alpha/2 \rfloor+1,\alpha|]$, there are $\phi(\gcd(p^k,p^{\alpha-k}))$ cusps of width $1$.
\end{itemize}
\end{proof}

\begin{proposition}\label{prop:cuspmult}
Let $M,N$ be two coprime integers. Then
$$\mathcal{C}(MN)=\mathcal{C}(M)\times\mathcal{C}(N) \ . $$
\end{proposition}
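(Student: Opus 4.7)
The strategy is to combine the Chinese remainder theorem identification of Proposition \ref{chinese} with a standard fact about cycles of a product of two permutations of coprime cycle lengths. More precisely, under the bijection
\[
\mathbb{P}^1(\mathbb{Z}/MN\mathbb{Z}) \xrightarrow{\sim} \mathbb{P}^1(\mathbb{Z}/M\mathbb{Z}) \times \mathbb{P}^1(\mathbb{Z}/N\mathbb{Z}), \qquad [c:d] \mapsto ([c_1:d_1],[c_2:d_2]),
\]
the recipe $[c:d]\cdot y_{0,MN}x_{0,MN} = [c+d:d]$ of Section \ref{subcusps} decomposes componentwise into $[c_i:d_i]\mapsto [c_i+d_i:d_i]$ in each factor. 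Hence $y_{0,MN}x_{0,MN}$ is conjugate to the product permutation $(y_{0,M}x_{0,M}) \times (y_{0,N}x_{0,N})$ acting on $E_{0,M}\times E_{0,N}$.

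Next, I would use the explicit description of cusp widths from Proposition \ref{prop:desccusps} (and Proposition \ref{chinese}) to show that every cycle length of $y_{0,M}x_{0,M}$ divides $M$ and every cycle length of $y_{0,N}x_{0,N}$ divides $N$. Since $\gcd(M,N)=1$, any such pair of cycle lengths $(p,q)$ is coprime.

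Now invoke the following elementary fact: if $\sigma \in \mathfrak{S}(A)$ has a cycle of length $p$ on a subset $A_p$ and $\tau \in \mathfrak{S}(B)$ has a cycle of length $q$ on a subset $B_q$, then $\sigma\times\tau$ restricted to $A_p\times B_q$ decomposes into $\gcd(p,q)$ cycles of length $\mathrm{lcm}(p,q)$. Taking $\gcd(p,q)=1$ gives exactly one cycle of length $pq$. Applied cycle-by-cycle, this produces a canonical bijection between the cycles of $y_{0,MN}x_{0,MN}$ and the pairs (cycle of $y_{0,M}x_{0,M}$, cycle of $y_{0,N}x_{0,N}$), i.e.\ the required bijection
\[
\mathcal{C}(MN) \simeq \mathcal{C}(M)\times\mathcal{C}(N).
\]

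The only delicate point is the divisibility claim on widths; the main obstacle is to ensure this holds uniformly (not just in the prime-power case), but Proposition \ref{prop:desccusps} combined with the product decomposition from Proposition \ref{chinese} reduces it to the prime-power case already handled. Everything else is bookkeeping.
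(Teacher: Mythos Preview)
Your argument is correct and runs roughly parallel to the paper's, but with a different emphasis. The paper argues via the canonical morphisms $\bip_{0,MN}\to\bip_{0,M}$ and $\bip_{0,MN}\to\bip_{0,N}$: each cycle of $y_{0,MN}x_{0,MN}$ projects to a cycle in each factor, so its width is a common multiple of $w(e_M)$ and $w(e_N)$; a counting argument via Proposition~\ref{prop:sumW} then pins it down to exactly $w(e_M)w(e_N)$. You instead identify $y_{0,MN}x_{0,MN}$ directly with the product permutation $(y_{0,M}x_{0,M})\times(y_{0,N}x_{0,N})$ and invoke the standard $\gcd/\mathrm{lcm}$ lemma for cycles of a product. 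Both routes hinge on the same fact, namely that the widths in $\bip_{0,M}$ and $\bip_{0,N}$ are coprime; the paper uses this implicitly (the step ``multiple of $w(e_M)w(e_N)$'' already needs it), while you isolate it explicitly.

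One simplification: your detour through Proposition~\ref{prop:desccusps} to establish that every width in $\bip_{0,M}$ divides $M$ is heavier than necessary. From the formula $[c:d]\cdot(y_{0,M}x_{0,M})=[c+d:d]$ you get immediately $[c:d]\cdot(y_{0,M}x_{0,M})^M=[c+Md:d]=[c:d]$ in $\mathbb{P}^1(\mathbb{Z}/M\mathbb{Z})$, so $(y_{0,M}x_{0,M})^M=\mathrm{id}$ and every cycle length divides $M$. This one line replaces the appeal to the prime-power description and makes the coprimality of widths transparent without any risk of circularity.
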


\begin{proof}
Consider the two canonical morphisms
$$\xymatrix{
\bip_{0,NM} \ar[d]^{(f,\phi)} \ar[r]^{(g,\chi)} & \bip_{0,N}\\
\bip_{0,M} } \ .
$$
From the very definition of these morphisms, the image of a cycle of $y_{0,MN}x_{0,MN}$ by $(f,\phi)$ (resp. $(g,\chi)$) can only be a cycle of $y_{0,M}x_{0,M}$ (resp. $y_{0,N}x_{0,N}$), possibly of smaller width, but in that case the width of the image divides the width of the original cycle.

Hence the width of the cusp containing the edge $(e_M,e_N)\in E_{0,M}\times E_{0,N}$ is a multiple of $w(e_M)w(e_N)$, where $w(e_M)$ (respectively $w(e_N)$) is the width of the cusp in $\bip_{0,M}$ (resp., $\bip_{0,N}$) containing $e_M$ (respectively $e_N$). 
One actually knows even more, since Proposition \ref{prop:sumW} and the equality
$$E_{0,MN}=E_{0,M}\times E_{0,N}$$ 
force it to be exactly $w(e_M)w(e_N)$. That concludes the proof.
\end{proof}

Proposition \ref{prop:cuspmult} can be rephrased as the statement that the ``width number'' function, which associates $|\mathcal{C}(N)|$ to $N\in\mathbb{N}_{>0}$, is multiplicative. Moreover, the reasonning in the proof of Proposition \ref{prop:cuspmult} together with Proposition \ref{prop:desccusps} show the following:

\begin{corollary}
Let $N>1$ be an integer. Then 

$$w:\mathcal{C}(N)\rightarrow \mathrm{Div}(N) \ ,$$

where $\mathrm{Div}(N)$ is the set of divisors of $N$. 
\end{corollary}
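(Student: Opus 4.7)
The plan is to combine Proposition~\ref{prop:desccusps} (which describes the cusp widths for prime-power levels) with Proposition~\ref{prop:cuspmult} (which gives the multiplicativity of both the cusp set and the width function under coprime factorization) via the prime factorization of $N$.

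First I would write $N = \prod_{i=1}^{r} p_i^{\alpha_i}$ with the $p_i$ distinct primes and $\alpha_i \geq 1$. Applying Proposition~\ref{prop:cuspmult} inductively on the number of prime factors yields a bijection
\[
\mathcal{C}(N) \;\simeq\; \prod_{i=1}^{r} \mathcal{C}(p_i^{\alpha_i}),
\]
and moreover the argument in its proof shows that the width of a cusp corresponding to a tuple $(c_1,\ldots,c_r)$ in the product is the product $\prod_i w(c_i)$ of the individual widths. Thus it suffices to check that each $w(c_i)$ is a divisor of $p_i^{\alpha_i}$.

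Next I would invoke Proposition~\ref{prop:desccusps}: for $p$ prime and $\alpha \geq 1$, every cusp of $\bip_{0,p^\alpha}$ has width either $p^{\alpha-2k}$ for some $k \in [|0, \lfloor \alpha/2 \rfloor|]$, or $1$ (for the cusps coming from $k > \alpha/2$), plus the two distinguished cusps $c_\infty$ of width $1$ and $c_0$ of width $p^\alpha$. In every case the width is a non-negative power of $p$ bounded above by $p^\alpha$, hence an element of $\mathrm{Div}(p^\alpha)$.

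Combining these two observations, for any cusp $c \in \mathcal{C}(N)$ corresponding to $(c_1,\ldots,c_r)$ we get
\[
w(c) \;=\; \prod_{i=1}^{r} w(c_i) \;\in\; \prod_{i=1}^{r} \mathrm{Div}(p_i^{\alpha_i}) \;=\; \mathrm{Div}(N),
\]
where the last equality holds because the $p_i^{\alpha_i}$ are pairwise coprime, so a product of divisors of coprime prime powers is exactly a divisor of their product. There is no real obstacle here; the only point worth stressing is that Proposition~\ref{prop:cuspmult} gives the \emph{multiplicativity} of widths (not merely divisibility), which is what lets us reduce cleanly to the prime-power case already handled in Proposition~\ref{prop:desccusps}.
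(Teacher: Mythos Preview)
Your proposal is correct and follows exactly the route the paper indicates: the corollary is stated as an immediate consequence of the reasoning in the proof of Proposition~\ref{prop:cuspmult} (multiplicativity of the cusp set and of the widths under coprime factorization) together with Proposition~\ref{prop:desccusps} (which shows that in the prime-power case every width is a power of $p$ not exceeding $p^\alpha$). You have simply spelled out what the paper leaves implicit.
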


Note that this function is onto if and only if $N$ is square-free. Putting all together, we have proved the following result.

\begin{theorem}\label{thm:c}
Let $N>2$ be an integer. Then
$$|\mathcal{C}(N)|=\sum_{d|N}\phi(\gcd(d,\frac{N}{d}))\ .$$
To each $d$ dividing $N$, there correspond $\phi(\gcd(d,\frac{N}{d}))$ cusps. Writing $N=\prod_ip_i^{\alpha_i}$ and $d=\prod_ip_i^{\beta_i}$, the width of such a cusp is:
$$w(c_{d,k})=\prod_{i}\max(1,p_i^{\alpha_i-2\beta_i})\ .$$
\end{theorem}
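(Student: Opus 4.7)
The plan is to bootstrap from the prime-power case in Proposition \ref{prop:desccusps} using the multiplicativity established in Proposition \ref{prop:cuspmult}. First, I would write $N = \prod_i p_i^{\alpha_i}$ in its prime factorisation. Iterating Proposition \ref{prop:cuspmult} (via the Chinese Remainder theorem on the coprime factors $p_i^{\alpha_i}$) gives a bijection
$$\mathcal{C}(N) \;\simeq\; \prod_i \mathcal{C}(p_i^{\alpha_i}),$$
and, from the proof of that proposition, the width of a cusp $c$ corresponding to $(c_i)_i$ is the product $w(c) = \prod_i w(c_i)$.

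Next, I would apply Proposition \ref{prop:desccusps} prime-power-wise. For each $i$, the set $\mathcal{C}(p_i^{\alpha_i})$ is indexed by a parameter $\beta_i \in [|0, \alpha_i|]$: for each $\beta_i$ there are $\phi(\gcd(p_i^{\beta_i}, p_i^{\alpha_i - \beta_i}))$ cusps, all of common width $\max(1, p_i^{\alpha_i - 2\beta_i})$. Combining across $i$, a cusp of $\mathcal{C}(N)$ is therefore specified by a tuple $(\beta_i)_i$, equivalently by the divisor $d = \prod_i p_i^{\beta_i}$ of $N$. For this $d$, the number of cusps of $\mathcal{C}(N)$ mapping to $(\beta_i)_i$ under the CRT bijection is
$$\prod_i \phi\bigl(\gcd(p_i^{\beta_i}, p_i^{\alpha_i - \beta_i})\bigr),$$
and each of them has width $\prod_i \max(1, p_i^{\alpha_i - 2\beta_i})$, which is precisely the stated formula for $w(c_{d,k})$.

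To finish, I would invoke the multiplicativity of Euler's totient $\phi$ on coprime arguments: since the integers $\gcd(p_i^{\beta_i}, p_i^{\alpha_i - \beta_i})$ are pairwise coprime and their product equals $\gcd(d, N/d)$, the count simplifies to $\phi(\gcd(d, N/d))$. Summing over the divisors $d$ of $N$ then yields
$$|\mathcal{C}(N)| = \sum_{d|N} \phi\bigl(\gcd(d, N/d)\bigr),$$
as claimed.

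The only delicate point is the bookkeeping that the tuple $(\beta_i)_i$ is a faithful label for cusps (no double counting, no omissions); this is immediate from the bijection $\mathcal{C}(N) \simeq \prod_i \mathcal{C}(p_i^{\alpha_i})$ combined with the prime-power description. Everything else is a routine application of multiplicativity of $\phi$ and of the gcd, so I do not expect a genuine obstacle beyond verifying that the product decomposition of widths agrees with the prescribed $\prod_i \max(1, p_i^{\alpha_i - 2\beta_i})$ uniformly over the two cases ($2\beta_i \leq \alpha_i$ versus $2\beta_i > \alpha_i$) treated separately in Proposition \ref{prop:desccusps}.
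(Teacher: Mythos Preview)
Your proposal is correct and follows essentially the same route as the paper, which simply states that Theorem~\ref{thm:c} results from ``putting all together'' Propositions~\ref{prop:desccusps} and~\ref{prop:cuspmult}: prime-power description plus multiplicativity via the Chinese Remainder Theorem, then the multiplicativity of $\phi$ and of $\gcd$ to collapse the product into $\phi(\gcd(d,N/d))$. One minor wording slip: a tuple $(\beta_i)_i$ (equivalently a divisor $d$) does not specify a single cusp but rather a fibre of $\prod_i \phi(\gcd(p_i^{\beta_i},p_i^{\alpha_i-\beta_i}))$ cusps, as you yourself note two lines later---this is harmless but worth phrasing consistently.
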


%%%%%%%%%%%%%%%%%%%%%%%%%%%%%%%%%%%%
\subsubsection{L-series of the cusps}

In this section we will denote $c(\cdot)$ the cusp number function $|\mathcal{C}(\cdot)|:\mathbb{N}_{>0}\rightarrow\mathbb{N}_{>0}$.

\begin{proposition}
Let $p\in\mathbb{N}$ be a prime number and let $\alpha\in\mathbb{N}_{>0}$. Then:
$$c(p^{2\alpha+1})=2p^\alpha\ .$$
If moreover $\alpha\geq1$, 
$$c(p^{2\alpha})=p^{\alpha-1}(p+1)\ .$$
\end{proposition}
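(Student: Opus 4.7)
The plan is to apply Theorem \ref{thm:c} specialised to the prime power case $N = p^{\alpha'}$, noting that for any $k \in [|0,\alpha'|]$ one has
$$\gcd(p^k,p^{\alpha'-k}) = p^{\min(k,\alpha'-k)},$$
so the general formula collapses to
$$c(p^{\alpha'}) \;=\; \sum_{k=0}^{\alpha'}\phi\bigl(p^{\min(k,\alpha'-k)}\bigr).$$
The single combinatorial input I would rely on beyond this is the classical telescoping identity $\sum_{j=0}^{n}\phi(p^j) = p^n$, which follows from $\phi(p^j) = p^j - p^{j-1}$ for $j \geq 1$.

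For the odd case $\alpha' = 2\alpha+1$, I would observe that the index $k$ and its mirror $\alpha'-k$ are always distinct (since $\alpha'$ is odd), and that as $k$ ranges over $[|0,\alpha|]$ its mirror ranges over $[|\alpha+1,2\alpha+1|]$. Thus the sum splits as two copies of $\sum_{k=0}^{\alpha}\phi(p^k)$, which equals $2p^\alpha$ by the identity above. This gives $c(p^{2\alpha+1}) = 2p^\alpha$.

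For the even case $\alpha' = 2\alpha$ with $\alpha \geq 1$, the middle index $k = \alpha$ has no distinct mirror and contributes $\phi(p^\alpha) = p^{\alpha-1}(p-1)$, while the remaining indices pair up and contribute $2\sum_{k=0}^{\alpha-1}\phi(p^k) = 2p^{\alpha-1}$. Adding,
$$c(p^{2\alpha}) = p^{\alpha-1}(p-1) + 2p^{\alpha-1} = p^{\alpha-1}(p+1),$$
as claimed.

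There is no real obstacle here; once one writes $\gcd(p^k,p^{\alpha'-k}) = p^{\min(k,\alpha'-k)}$ and separates the diagonal term (present only when $\alpha'$ is even), both identities fall out of the telescoping sum. The only mild care needed is to keep the parity bookkeeping straight so that the middle term is not double-counted in the even case and is correctly absent in the odd case.
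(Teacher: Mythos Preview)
Your proof is correct and follows essentially the same route as the paper: both start from Theorem~\ref{thm:c}, exploit the symmetry $k\leftrightarrow \alpha'-k$ to pair up terms (with a lone middle term in the even case), and then sum $\phi(p^j)$. Your use of the telescoping identity $\sum_{j=0}^{n}\phi(p^j)=p^n$ is a mild streamlining of the paper's explicit geometric-series computation, but the argument is the same in substance.
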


\begin{proof}
From Theorem \ref{thm:c}, and for $k\in\mathbb{N}$
$$c(p^k)=\sum\limits_{0\leq l\leq k}\phi(\gcd(p^l,p^{k-l})\ .$$
Since $\phi(p^i)=p^{i-1}(p-1)$, one can write:
$$c(p^{2\alpha+1})=2\sum\limits_{0\leq k\leq \alpha} p^{k-1}(p-1)
=2+2(p-1)\sum\limits_{0\leq k\leq \alpha-1}p^{k}
=2+2(p-1)\frac{p^\alpha-1}{p-1}
=2p^\alpha.$$
Mutatis mutandis,
$$c(p^{2\alpha})=2\sum\limits_{0\leq k\leq \alpha-1} p^{k-1}(p-1)+p^{\alpha-1}(p-1)
=2+2(p^{\alpha-1})-2+p^{\alpha-1}(p-1)
=p^{\alpha-1}(p+1).$$
\end{proof}

\begin{definition}
The formal L-series associated to the function $c(n)$ is 
$$L(c,s)=\sum_{n\geq 0}\frac{c(n)}{n^s} \ . $$
Since $c(n)$ is multiplicative, one can write $L(c,s)$ as an Euler product
$$L(c,s)=\prod_{p\ \mathrm{prime}} L_p(c,s)=\prod_{p\ \mathrm{prime}} \sum_{\alpha\geq 0}\frac{c(p^\alpha)}{p^{\alpha s}}\ .$$
\end{definition}

For all prime $p$, and all $s\in\mathbb{C}$ such that
$$|s|>\frac{1}{2}+\frac{e}{2\ln2}  \ , $$ 
the series $L_p(c,s)$ converges absolutely. 

\begin{proposition}\label{prop:L}
Let $s\in\mathbb{C}$ satisfying the latter bound. One can rearrange $L_p(c,s)$ as:
$$L_p(c,s)=L_p(c,s)_e+L_p(c,s)_o=\sum_{\alpha\geq 0}\frac{c(p^{2\alpha})}{p^{2\alpha s}}+\sum_{\alpha\geq 0}\frac{c(p^{2\alpha+1})}{p^{(2\alpha+1) s}} \ ,$$
and the computation yields:
$$L_p(c,s)_e=\frac{p^s+p^{-s}}{p^s-p^{1-s}} \ , \qquad L_p(c,s)_o=\frac{2}{p^s-p^{1-s}}\ .$$
\end{proposition}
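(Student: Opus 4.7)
The plan is a direct computation by substitution of the closed-form expressions for $c(p^{2\alpha})$ and $c(p^{2\alpha+1})$ into the two sub-series, followed by summation as geometric series. Both computations hinge on the common ratio $r = p^{1-2s}$, which satisfies $|r|<1$ in the prescribed convergence region, and on the elementary identity $p^s - p^{1-s} = p^s(1-p^{1-2s})$.

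First I would handle the odd part, which is cleaner. Substituting $c(p^{2\alpha+1}) = 2p^\alpha$ gives
$$L_p(c,s)_o = \sum_{\alpha\geq 0}\frac{2p^\alpha}{p^{(2\alpha+1)s}} = \frac{2}{p^s}\sum_{\alpha\geq 0} p^{\alpha(1-2s)} = \frac{2}{p^s}\cdot\frac{1}{1-p^{1-2s}} = \frac{2}{p^s-p^{1-s}},$$
using the geometric series formula and then factoring $p^s$ out of the denominator.

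For the even part I would separate the $\alpha=0$ term (which is just $c(1)=1$) from the tail, since the formula $c(p^{2\alpha}) = p^{\alpha-1}(p+1)$ was only stated for $\alpha \geq 1$:
$$L_p(c,s)_e = 1 + \sum_{\alpha\geq 1}\frac{p^{\alpha-1}(p+1)}{p^{2\alpha s}} = 1 + \frac{p+1}{p}\sum_{\alpha\geq 1} p^{\alpha(1-2s)} = 1 + \frac{p+1}{p}\cdot\frac{p^{1-2s}}{1-p^{1-2s}}.$$
Putting this over the common denominator $1 - p^{1-2s}$, the numerator becomes $(1-p^{1-2s}) + (p+1)p^{-2s} = 1 + p^{-2s}$, so
$$L_p(c,s)_e = \frac{1 + p^{-2s}}{1-p^{1-2s}} = \frac{p^s + p^{-s}}{p^s - p^{1-s}},$$
where the final equality is obtained by multiplying numerator and denominator by $p^s$.

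There is no real obstacle here: the only subtlety worth flagging is the convergence condition given in the text, which guarantees $|p^{1-2s}|<1$ so that the geometric series are legitimate, and the separate handling of $\alpha=0$ in the even series since $c(1) = 1$ does not fit the formula $p^{\alpha-1}(p+1)$ at $\alpha = 0$ (one would get $(p+1)/p \neq 1$). Once these two details are acknowledged the proof is a half-page calculation.
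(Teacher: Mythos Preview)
Your proof is correct and follows essentially the same approach as the paper: separate the $\alpha=0$ term in the even series, sum both series as geometric series with ratio $p^{1-2s}$, and simplify by factoring $p^s$. The paper's argument is line-for-line the same computation, only with $(1+1/p)$ written in place of your $(p+1)/p$.
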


\begin{proof}
One readily computes:
\begin{align*}
L_p(c,s)_e & =1+(1+\frac{1}{p})\sum_{\alpha\geq1}\frac{p^\alpha}{p^{2\alpha s}}=1+(1+\frac{1}{p})\sum_{\alpha\geq1}(p^{1-2s})^\alpha
\\
&=1+(1+\frac{1}{p})(\frac{p^{1-2s}}{1-p^{1-2s}})=\frac{1-p^{1-2s}+p^{1-2s}+p^{-2s}}{1-p^{1-2s}}
=\frac{p^s+p^{-s}}{p^s-p^{1-s}} \ ,
\end{align*}
and similarly
$$L_p(c,s)_o=\frac{2}{p^s}\sum_{\alpha\geq0}p^{\alpha(1-2s)}=\frac{2}{p^s}\frac{1}{1-p^{1-2s}}=
\frac{2}{p^s-p^{1-s}} \ .$$
\end{proof}

\begin{corollary}
The series $L(c,s)$ can be expressed in terms of Riemann's $\zeta$-function:
$$L(c,s)=\zeta(2s-1)(\frac{\zeta(s)}{\zeta(2s)})^2 \ .$$
\end{corollary}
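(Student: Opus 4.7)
The plan is to work factor by factor in the Euler product. Since $c$ is multiplicative (Proposition \ref{prop:cuspmult}), so is $n \mapsto c(n)$, and the formal $L$-series admits the Euler factorisation $L(c,s) = \prod_p L_p(c,s)$ introduced just above the corollary. Proposition \ref{prop:L} already splits each local factor as $L_p(c,s) = L_p(c,s)_e + L_p(c,s)_o$ and gives closed forms with common denominator $p^s - p^{1-s}$. The first step is therefore just to add these two fractions to obtain
\begin{equation*}
L_p(c,s) = \frac{p^s + p^{-s} + 2}{p^s - p^{1-s}}.
\end{equation*}

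Next I would recognise the numerator as a perfect square: multiplying the numerator and denominator through by $p^{-s}$ gives
\begin{equation*}
L_p(c,s) = \frac{1 + 2p^{-s} + p^{-2s}}{1 - p^{1-2s}} = \frac{(1+p^{-s})^2}{1 - p^{1-2s}}.
\end{equation*}
The rest is just matching Euler factors. Recall that the local factor of $\zeta(s)$ at a prime $p$ is $(1-p^{-s})^{-1}$, so the local factor of $\zeta(s)/\zeta(2s)$ is $(1-p^{-2s})/(1-p^{-s}) = 1 + p^{-s}$, whose square is $(1+p^{-s})^2$. The local factor of $\zeta(2s-1)$ is $(1-p^{-(2s-1)})^{-1} = (1 - p^{1-2s})^{-1}$. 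Combining these reproduces exactly the expression obtained for $L_p(c,s)$, so the two Euler products agree factor by factor.

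The final step is to take the product over all primes, which yields the desired identity $L(c,s) = \zeta(2s-1)\bigl(\zeta(s)/\zeta(2s)\bigr)^2$ as a formal equality of Dirichlet series; on the half-plane where the bound stated before Proposition \ref{prop:L} ensures absolute convergence of each $L_p(c,s)$, this equality is also analytic. There is no real obstacle here beyond the algebraic identity $p^s + p^{-s} + 2 = (p^{s/2} + p^{-s/2})^2$ that lets one factor the numerator; once that is spotted the identification with the classical Euler factors is automatic, and the only thing to be slightly careful about is the region of convergence, which is inherited from the bound established before Proposition \ref{prop:L} (and can be compared with the usual convergence regions of $\zeta(s)$ and $\zeta(2s-1)$).
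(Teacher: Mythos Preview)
Your proof is correct and follows essentially the same route as the paper: add the even and odd parts from Proposition \ref{prop:L} over the common denominator $p^s - p^{1-s}$, recognise the numerator $p^s + 2 + p^{-s}$ as $(p^{s/2}+p^{-s/2})^2$ (equivalently $(1+p^{-s})^2$ after clearing $p^{-s}$), and then match $\frac{(1+p^{-s})^2}{1-p^{1-2s}}$ against the local Euler factors of $\zeta(2s-1)$ and $(\zeta(s)/\zeta(2s))^2$. The only difference is that you spell out the identification of each Euler factor a bit more explicitly than the paper does.
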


\begin{proof}
From Proposition \ref{prop:L}, for a given prime $p$, one has:
$$L_p(c,s)=L_p(c,s)_e+L_p(c,s)_o=\frac{p^s+p^{-s}}{p^s-p^{1-s}}+\frac{2}{p^s-p^{1-s}}
=\frac{(p^{s/2}+p^{-s/2})^2}{p^s-p^{1-s}}=\frac{(1+p^{-s})^2}{1-p^{1-2s}} \ .$$
Hence
$$L(c,s)=\prod_{p\ \mathrm{prime}}\frac{(1+p^{-s})^2}{1-p^{1-2s}}$$
Now
$\zeta(s)=\displaystyle{\prod_{p\ \mathrm{prime}}\frac{1}{1-p^{-s}}}$ and $\displaystyle{\frac{\zeta(s)}{\zeta(2s)}=\prod_{p\ \mathrm{prime}} (1+p^{-s})}$.
\end{proof}

\def\belyi{Bely\u{\i}\ }

%%%%%%%%%%%%%%%%%%%%%%%%%%%%%%%%%%%%
\subsection{Complex structures and Bely\u{\i} maps}
In this subsection we discuss analytical aspects of the dessins, realised explicitly as preimages of so-called \belyi maps. We follow closely the presentation of \cite{joneswolfart}.

%%%%%%%%%%%%%%%%

\subsubsection{The triangle group $\psl\simeq\Delta(2,3,\infty)$ and its action on $\mathbb{H}$}
The {\bf triangle group} of type $(a,b,c)$ is the group with presentation:
\begin{equation}
\Delta(a,b,c)=\langle X,Y,Z | X^a=Y^b=Z^c=XYZ=1\rangle \ .
\end{equation}
The modular group corresponds to the special case 
$$\Delta(2,3,\infty)=\langle S,U,Z | S^2=U^3=SUZ=1\rangle\simeq\psl\ .$$
Consider a hyperbolic triangle $T$ with internal angles $\pi/2$, $\pi/3$ and $0$ (for example, the triangle in the hyperbolic plane $\mathbb{H}$ with vertices $i$, $e^{i\pi/3}$ and $\infty$). Then, the group generated by the rotations through $2\pi/2$, $2\pi/3$ and $0$ about the vertices of this triangle is $\Delta(2,3,\infty)$. Let the {\bf extended triangle group} $\Delta[2,3,\infty]$ be the group generated by the reflections with respect to the sides of $T$. The half-plane $\mathbb{H}$ is tessellated by the images of $T$ under $\Delta[2,3,\infty]$, and the group $\Delta(2,3,\infty)$ is the subgroup of order $2$ in $\Delta[2,3,\infty]$ consisting of the transformations which preserve the orientation.

Consider the following graph embedded in $\mathbb{H}$: let there be a white (respectively, black and red) vertex at each image of $i$ (respectively, $e^{i\pi/3}$ and $\infty$) under $\Delta(2,3,\infty)$, and an edge for each image of the sides of $T$ under the same group. Now, remove the red vertices and all edges incident to them; this yields a bipartite graph embedded in $\mathbb{H}$. The counterclockwise orientation on $\mathbb{H}$ induces a fat structure on the graph, and the corresponding ABM is $\bip_\infty(2,3,\infty)$. By construction, $\psl$ is the group generated by the rotations about the vertices of the hyperbolic triangle $T$, and hence naturally appears as the automorphism group of this ABM:
\begin{equation}
\mathrm{Aut}(\bip_\infty(2,3,\infty))\simeq \Delta(2,3,\infty)\simeq \psl \ .
\end{equation}
Since this ABM is regular (because the automorphism group is transitive, for example), $\psl$ is also the cartographic group of $\bip_\infty(2,3,\infty)$.

%%%%%%%%%%%%%%%%%%%%%%%%
\subsubsection{Complex structure on the surfaces corresponding to the $\bip_{0,N}$}

Let $N\in\mathbb{N}_{>0}$. Recall that the dessin d'enfant $\bip_{0,N}$ is the quotient $\He(N)\backslash\bip_\infty(2,3,\infty)$, and that it comes with topological surfaces $X_0(N)$ and $Y_0(N)$. 

The embedding $\bip_\infty(2,3,\infty)\hookrightarrow\mathbb{H}$ induces a complex structure on $X_0(N)$ and $Y_0(N)$, as explained pedagogically in \cite{joneswolfart}. As shown in Figures \ref{f:egFund9} and \ref{f:egFund10},each $\bip_{0,N}$ corresponds to a fundamental domain for the action of $\He(N)$ on $\mathbb{H}$.

The complex structure on the surfaces $X_0(N)$ and $Y_0(N)$ may have torsion (or orbifold) points of order $2$ and $3$. In terms of Fuchsian groups, each of these torsion points corresponds to an equivalence class of fixed points for some elliptic transformations in $\He(N)$. In the bipartite fat graphs, the torsion points of order $2$ correspond to the $1$-valent white vertices and the torsion points of order $3$, to the $1$-valent black vertices. Recall that we have computed their number for each $N$ in subsection \ref{subcusps}.

\begin{figure}[t!h!]
\begin{subfigure}{0.7\textwidth}
\centering
\includegraphics[scale=0.4]{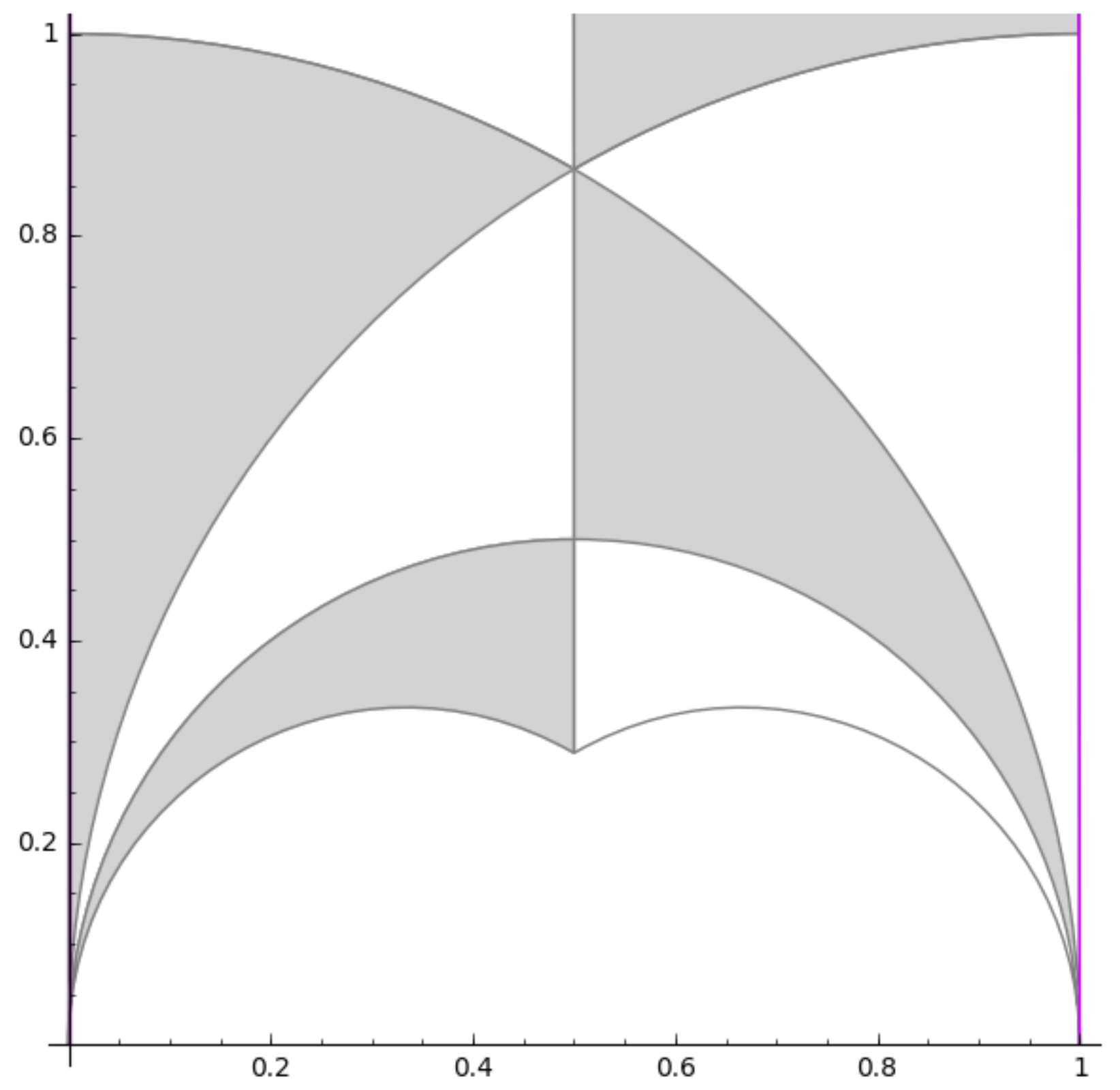}
\end{subfigure}
\begin{subfigure}{.2\textwidth}
\begin{tikzpicture}[scale=1.5]
\draw[fill=black] (0,0) circle (0.1);
\draw[fill=black] (0,-2) circle (0.1);
\draw (0,1) circle (0.1);
\draw (0,-1) circle (0.1);
\draw (0,0.5) circle(0.5);
\draw (0,0)--(0,-0.9);
\draw (0,-1.1)--(0,-2);
\end{tikzpicture}
\end{subfigure}
\caption{
{\sf Fundamental domains for $\He(3)$ (on the left) and $\bip_{0,3}$ (on the right).}
\label{f:egFund9}
}
\end{figure}

\begin{figure}[t!h!]\label{fig10}
\begin{subfigure}{0.6\textwidth}
\centering
\includegraphics[scale=0.4]{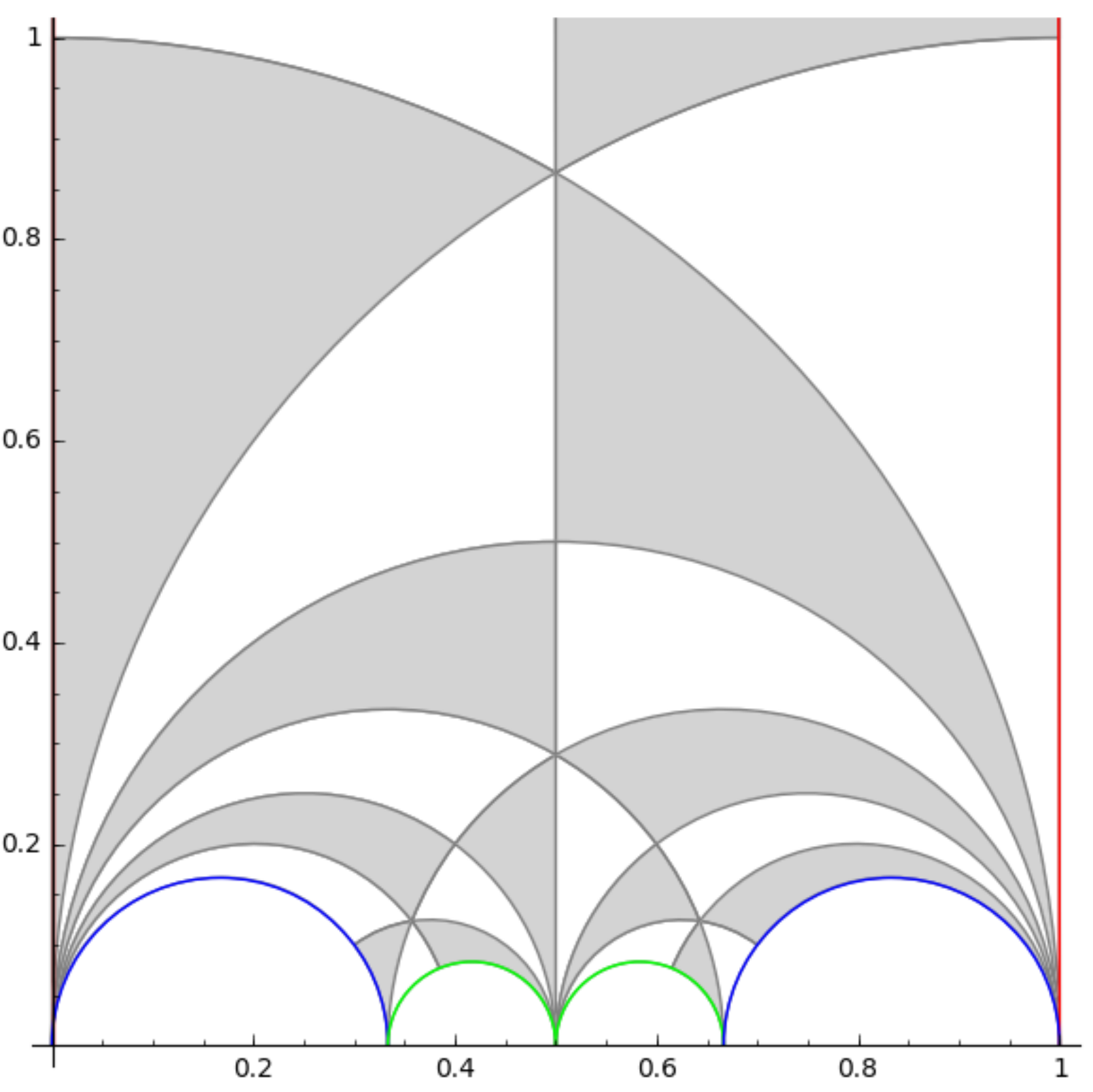}
\end{subfigure}
\begin{subfigure}{0.3\textwidth}
\centering
\begin{tikzpicture}[scale=1.3]
\draw[fill=black] (0,0) circle (0.1);
\draw[fill=black] (0,-2) circle (0.1);
\draw (0,1) circle (0.1);
\draw (-0.5,-2.5) circle (0.1);
\draw (0.5,-2.5) circle (0.1);
\draw (0,-1) circle (0.1);
\draw (0,-3) circle (0.1);
\draw (0,-4) circle (0.1);
\draw[fill=black] (1,-3) circle (0.1);
\draw[fill=black] (-1,-3) circle (0.1);
\draw (0,0.5) circle(0.5);
\draw (0,-2)--(-1,-3);
\draw (0,-2)--(1,-3);
\draw (0,0)--(0,-0.9);
\draw (0,-1.1)--(0,-2);
\draw (-1,-3)--(1,-3);
\draw (-1,-3) arc(180:360:1);
\end{tikzpicture}
\end{subfigure}
\caption{
{\sf Fundamental domains for $\He(6)$ (on the left) and $\bip_{0,6}$ (on the right).}
\label{f:egFund10}
}
\end{figure}

\begin{example}
On the left-hand-side of Figures \ref{f:egFund9} and \ref{f:egFund10}, fundamental domains for $\He(3)$ and $\He(6)$ obtained with SAGE \cite{sage} are displayed.
In these fundamental domains, the images of the triangle $T$ under $\psl$ are in white, those of $T'$, in grey. Colors on the edges label the identifications through which one recovers the topology of the quotient surface $\He(N)\backslash\mathbb{H}$, but those leading to torsion points (for example, the identification of the two lowermost edges of the fundamental domain shown for $\He(3)$ is implicit). The corresponding dessins d'enfants (resp. $\bip_{0,3}$ and $\bip_{0,6}$) are drawn on the right-hand side of Fig. \ref{f:egFund9} and \ref{f:egFund10}.

\end{example}

%%%%%%%%%%%%%%%%%%%%%%%%
\subsubsection{Bely\u{\i}'s Theorem and dessins d'enfants}

The whole theory of dessins d'enfants, and the reason they are related to some number-theoretic questions, relies on the following key theorem \cite{belyi}. Let $X$ be a compact Riemann surface. It is a deep and fundamental result that $X$ is biholomorphic to the analytic set of the complex points of a smooth algebraic curve, in a complex projective space $\mathbb{P}^n(\mathbb{C})$ for some $n\in\mathbb{N}_{>0}$. 

Let $K$ be a subfield of $\mathbb{C}$. A smooth algebraic curve has {\bf a model over $K$} if the underlying analytic variety is isomorphic to the zero locus of a finite set of polynomials with coefficients in $K$, in some affine or projective complex space. Recall that $\overline{\mathbb{Q}}$ is the algebraic closure of the field of rational numbers $\mathbb{Q}$ (equivalently, the field of algebraic numbers).

\begin{theorem}[Bely\u{\i}, 1979]
The compact Riemann surface $X$ has a model over $\overline{\mathbb{Q}}$ if and only if there exists a non-constant holomorphic function $\beta:X\rightarrow\mathbb{P}^1(\mathbb{C})$ which ramifies over at most three points (which can be chosen to be $0,1$, and $\infty$, by considering the action of $\mathrm{PSL}_2(\mathbb{C})$ on $\mathbb{P}^1(\mathbb{C})$ by automorphisms). 
\end{theorem}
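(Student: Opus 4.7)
The theorem has two implications and I would tackle them by very different routes.

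\textbf{The easy direction ($\Rightarrow$), Belyi's construction.} Assume $X$ has a model over $\overline{\mathbb{Q}}$. By general algebraic geometry any non-constant rational function yields a non-constant holomorphic map $f\colon X\to\mathbb{P}^1(\mathbb{C})$, and since $X$ is defined over $\overline{\mathbb{Q}}$ we may take $f$ defined over $\overline{\mathbb{Q}}$ as well, so its critical values form a finite set $S\subset\mathbb{P}^1(\overline{\mathbb{Q}})$. The plan is to modify $f$ in two stages so that, at the end, its critical values lie in $\{0,1,\infty\}$. \emph{Stage 1 (rationalisation of critical values).} Replace $f$ by $P\circ f$ where $P\in\mathbb{Q}[T]$ is the minimal polynomial over $\mathbb{Q}$ of the algebraic elements of $S\setminus\{\infty\}$. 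The new critical values are contained in $P(S)\cup\{\text{critical values of }P\}$; the first part is a single $\mathbb{Q}$-orbit collapsed to fewer points, and the critical values of $P$ are roots of $P'$, a polynomial of strictly smaller degree. Iterating this shrinks the non-rational critical locus until all critical values are in $\mathbb{P}^1(\mathbb{Q})$. \emph{Stage 2 (Belyi polynomials).} Given critical values in $\mathbb{Q}\cup\{\infty\}$, use M\"obius transformations to push them into $\{0,1,\infty\}\cup\{r_1,\dots,r_k\}$ with $r_i\in\mathbb{Q}\cap\,]0,1[$. For $r=m/(m+n)$ with $\gcd(m,n)=1$, apply the ``Belyi polynomial''
\[
B_{m,n}(x)=\frac{(m+n)^{m+n}}{m^{m}n^{n}}\,x^{m}(1-x)^{n},
\]
which sends $\{0,1,\infty\}$ into itself, ramifies only over those three points, and sends $r$ to $1$. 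Composition with $B_{m,n}$ therefore eliminates one element of $\{r_1,\dots,r_k\}$ at the cost of possibly introducing new rational critical values; a careful bookkeeping argument (induction on the ``complexity'' of the finite set of rational critical values, measured by the numerators and denominators) shows the process terminates with a map ramified only over $\{0,1,\infty\}$.

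\textbf{The hard direction ($\Leftarrow$), descent.} Assume $\beta\colon X\to\mathbb{P}^{1}(\mathbb{C})$ is a Belyi map, i.e.\ a finite covering ramified only over $\{0,1,\infty\}$. The key observation is that $\mathbb{P}^{1}_{\mathbb{Q}}\setminus\{0,1,\infty\}$ is a scheme defined over $\mathbb{Q}$, and finite \'etale coverings of a variety over $\overline{\mathbb{Q}}$ form a category equivalent, via the fibre functor at a geometric base-point, to the category of finite sets with a continuous action of the \'etale fundamental group. For $U=\mathbb{P}^{1}\setminus\{0,1,\infty\}$ the topological fundamental group is the free group $F_{2}$ on two generators, so connected degree-$n$ coverings of $U_{\mathbb{C}}$ are classified by conjugacy classes of index-$n$ subgroups of $F_{2}$ --- \emph{purely combinatorial, finite data}. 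The natural action of $\mathrm{Gal}(\overline{\mathbb{Q}}/\mathbb{Q})$ on the set of isomorphism classes of finite \'etale covers of $U_{\overline{\mathbb{Q}}}$ therefore has finite orbits. Equivalently, each such cover, and in particular $\beta$, is fixed by an open subgroup of $\mathrm{Gal}(\overline{\mathbb{Q}}/\mathbb{Q})$, which by Galois descent implies it is defined over some number field, hence over $\overline{\mathbb{Q}}$. The compactification $X\to\mathbb{P}^{1}(\mathbb{C})$ obtained by filling in the fibres over $0,1,\infty$ is the normalisation of $\mathbb{P}^{1}_{\overline{\mathbb{Q}}}$ in the function field of the \'etale cover, hence inherits a model over $\overline{\mathbb{Q}}$.

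\textbf{Main obstacle.} The nontrivial work is the descent direction: one must justify rigorously that ``classification by subgroups of $F_{2}$ is Galois-equivariant.'' Concretely, this rests on the comparison between the topological and \'etale fundamental groups (Riemann existence theorem and GAGA) together with the fact that $U=\mathbb{P}^{1}\setminus\{0,1,\infty\}$ is defined over $\mathbb{Q}$, so that the outer Galois action on $\widehat{F_{2}}=\pi_{1}^{\mathrm{\acute et}}(U_{\overline{\mathbb{Q}}})$ makes sense and has finite orbits on the set of finite-index subgroups. I would cite these results (Grothendieck's SGA1 formalism, or the textbook treatment in \cite{joneswolfart}) rather than develop them. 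By contrast, the forward direction is entirely elementary: the only real content is the clever choice of $B_{m,n}$ and the termination of the induction in Stage 2, which can be made quantitative by tracking $\sum(m_{i}+n_{i})$ over the rational critical values.
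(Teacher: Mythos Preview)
The paper does not prove this theorem: it is quoted as Bely\u{\i}'s 1979 result with a citation, and the text immediately moves on to the consequences (dessins from Bely\u{\i} maps and the construction of $J_{0,1}$ via Riemann mapping plus Schwarz reflection). So there is no ``paper's own proof'' to compare against; your proposal supplies what the paper deliberately leaves to the literature.

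As a sketch your argument is sound and follows the standard route. A couple of places deserve tightening. In Stage~1, ``the minimal polynomial of the algebraic elements of $S\setminus\{\infty\}$'' is ambiguous; the usual induction takes the minimal polynomial of a \emph{single} critical value of maximal degree over $\mathbb{Q}$ and tracks the pair (maximal degree, number of critical values of that degree), which strictly decreases lexicographically. In Stage~2, the termination after composing with successive $B_{m,n}$ is genuinely delicate --- the naive count of rational critical values can temporarily go up --- so the ``complexity'' you invoke should be made explicit (e.g.\ after a M\"obius transformation pinning three critical values to $0,1,\infty$, induct on the maximal denominator of the remaining ones). For the descent direction, your use of finiteness of index-$n$ subgroups of $F_2$ plus the comparison isomorphism is correct; the cleanest phrasing avoids Galois orbits altogether: by Riemann existence and GAGA, the topological cover $X\setminus\beta^{-1}(\{0,1,\infty\})\to\mathbb{P}^1(\mathbb{C})\setminus\{0,1,\infty\}$ is classified by a finite-index subgroup of $F_2$, and the \emph{same} combinatorial datum defines a finite \'etale cover of $\mathbb{P}^1_{\overline{\mathbb{Q}}}\setminus\{0,1,\infty\}$, whose smooth compactification is the desired model. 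This sidesteps the field-of-moduli versus field-of-definition issue that your ``fixed by an open subgroup'' formulation would otherwise have to address.
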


Such a map $\beta$ is called Bely\u{\i} map. The preimage under $\beta$ of the real segment $[0,1]\in\mathbb{P}^1(\mathbb{C})$ is a bipartite graph embedded in $X$. Since $X$ has a complex structure it is oriented, and this orientation defines a fat structure on this bipartite graph. 
Hence, any Bely\u{\i} map defines a dessin d'enfant.

Conversely, a dessin d'enfant canonically defines a compact surface on which its underlying graph is embedded. The exact way the dessin d'enfant defines a complex structure on this compact surface is a slight generalisation of the two last paragraphs (again, see \cite{joneswolfart} for more details on this implication). Given a dessin d'enfant, there is always a corresponding Bely\u{\i} map for which the white vertices (respectively, black vertices) are the preimages of 0 (resp., 1) and the edges, the preimages of the segment $[0,1]$. The explicit expression of this Bely\u{\i} map is in general difficult to derive. One can however motivate its existence as follows.

The dessins d'enfants of type $(2,3,c)$ yield complex structures built from the hyperbolic triangle $T$ of type $(2,3,\infty)$. This (open) triangle is conformally equivalent to $\mathbb{H}$ as a consequence of Riemann's open mapping theorem, and a stronger version of the latter even implies that the biholomorphism
$J:T\rightarrow\mathbb{H}$
can be extended continuously to the boundary $\partial T$ of $T$, and chosen in such a way that the vertices of $T$ are mapped to $0,1$ and $\infty$. Schwarz's reflection principle then asserts that one can extend $J$ to its image $T'$ under the reflection through the edge $[i,e^{i\pi/3}]$, which yields a map
$J:(T\cup T')\rightarrow\mathbb{P}^1(\mathbb{C})$. This map $J$ is usually called Klein's function or Klein's J-invariant. In what follows we will denote this J-invariant $J_{0,1}$, in order to avoid confusion.  
Successive applications of the reflection principle indeed further extend $J$ to:
\begin{equation}\label{J}
J:G\backslash\mathbb{H}\rightarrow\mathbb{P}^1(\mathbb{C}) \ ,
\end{equation}
for any subgroup $G<\psl$, and this $J$ can even be continued on the compactification of $G\backslash\mathbb{H}$ (with some help from the removable singularity theorem).

In the end, any subgroup of $G<\psl$ of finite index (e.g., a $\He(N)$) gives rise to a complex surface with cusps, with Fuchsian model $G\backslash\mathbb{H}$. This surface can be compactified by adding a point at each cusp, and comes with a Bely\u{\i} map obtained from Klein's invariant $J_{0,1}$ through the reflection principle. Bely\u{\i}'s theorem then states that the algebraic curve defined by such a dessin d'enfant always has a model over a number field.

In fact, it is a classical result that the algebraic curves $Y_0(N)$ and $X_0(N)$, for $N\in\mathbb{N}$, have a model over $\mathbb{Q}$, even if their defining equation over $\mathbb{Q}$ is in general hard to derive. The complete projective algebraic curve $X_0(N)$ corresponding to $\He(N)$ is usually called the \emph{(compact) classical modular curve}. 
It satisfies a polynomial equation with rational coefficients 
$$\Phi(x,y)=0$$
such that $(x,y)=(J(\tau),J(N\tau))$ is a point of the curve, with $J$ the \emph{usual} Klein's function.

%%%%%%%%%%%%%%%%%%%%%%%%%
\subsubsection{Genus formula}

For each dessin $\bip_{0,N}$ we have a complete description of the set of torsion points of order $2$ and $3$, the set of cusps, and their width. Moreover we know that the map
$$X_0(N)\rightarrow \mathbb{P}^1(\mathbb{C})$$
induced by reflection principle on Klein's invariant $J_{0,1}$ is the \belyi map corresponding to this dessin $\bip_{0,N}$. This map ramifies at the vertices and the cusps. The ramification order is the valency for a vertex (or the width for a cusp). We now have enough data to compute the genus of $X_0(N)$ for all $N\in\mathbb{N}_{>0}$ using Riemann-Hurwitz formula. This gives the following.

\begin{theorem}\label{thm4}
Let $N\in\mathbb{N}_{>0}$, $|E_{0,N}|=\displaystyle{N\prod_{p|N}(1+p^{-1})}$, and $\nu_2(N)$ (resp., $\nu_3(N)$) the number of torsion points of order $2$ (resp., $3$) of the dessin $\bip_{0,N}$. Let $c_w(N)$ be the number of cusps of width $w$ in $\bip_{0,N}$. Then: 
$$\chi(X_0(N))=2|E_{0,N}|-\frac{1}{2}(|E_{0,N}|-\nu_2(N))-\frac{2}{3}(|E_{0,N}|-\nu_3(N))-\sum_{w\geq 1}c_w(N)(w-1) \ , $$
where $\chi(X_0(N))$ is the Euler characteristic of $X_0(N)$.
\end{theorem}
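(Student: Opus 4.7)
The plan is to apply the Riemann--Hurwitz formula to the \belyi map $\beta_N : X_0(N) \to \mathbb{P}^1(\mathbb{C})$ induced on the compactification by Klein's invariant $J_{0,1}$, as described just before the statement. This map is ramified only over $\{0,1,\infty\}$, and its degree equals the number of edges of $\bip_{0,N}$, namely $n := |E_{0,N}| = N\prod_{p\mid N}(1+p^{-1})$, since the preimage of a generic point of $[0,1]$ consists of one interior point of each edge of the dessin.

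First I would pin down the three fibres. Over $0 \in \mathbb{P}^1(\mathbb{C})$ the preimages are exactly the white vertices of $\bip_{0,N}$, and the ramification index at a white vertex equals its valency; by construction of $\bip_{0,N}$ as a quotient of $\bip_\infty(2,3,\infty)$, white vertices are either $2$-valent (generic) or $1$-valent (torsion of order $2$). Counting edges from the white side gives $2\cdot(\text{\# generic}) + 1\cdot\nu_2(N) = n$, hence there are $(n-\nu_2(N))/2$ generic white vertices, each contributing $e_p-1 = 1$ to the ramification sum, while the $\nu_2(N)$ torsion vertices contribute $0$. Total white contribution: $(n-\nu_2(N))/2$. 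The analogous analysis over $1$ (black vertices, valencies $3$ or $1$) yields $(n-\nu_3(N))/3$ generic black vertices each contributing $2$, for a total of $2(n-\nu_3(N))/3$.

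Over $\infty$ the preimages are the cusps, and the ramification index at a cusp is, by the very description of $\beta_N$ from the reflection principle, its width in the sense of Section~\ref{subcusps}; so the contribution here is $\sum_{w\geq 1} c_w(N)(w-1)$. Plugging into Riemann--Hurwitz
\[
\chi(X_0(N)) \;=\; \deg(\beta_N)\cdot\chi(\mathbb{P}^1(\mathbb{C})) \;-\; \sum_{p\in X_0(N)}(e_p-1)
\]
with $\chi(\mathbb{P}^1(\mathbb{C})) = 2$ and the three ramification sums above yields exactly the claimed formula
\[
\chi(X_0(N)) \;=\; 2n - \tfrac{1}{2}(n-\nu_2(N)) - \tfrac{2}{3}(n-\nu_3(N)) - \sum_{w\geq 1} c_w(N)(w-1).
\]

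The substantive content is all in the earlier sections (the combinatorial description of the dessin, the bijection with $\mathbb{P}^1(\mathbb{Z}/N\mathbb{Z})$, and the computations of $\nu_2$, $\nu_3$ and the cusp widths); the present statement is essentially a direct application of Riemann--Hurwitz to the \belyi map attached to $\bip_{0,N}$. The only point that deserves a careful sentence is the identification of ramification indices with valencies on the finite side and with cusp widths at $\infty$, which follows from the local form of $J_{0,1}$ near the vertices of the hyperbolic triangle $T$ (angles $\pi/2$, $\pi/3$, $0$) together with the fact that quotients by $\He(N)$ merge these triangles according precisely to the combinatorics encoded by $x_{0,N}$, $y_{0,N}$ and $y_{0,N}x_{0,N}$. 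I do not expect a genuine obstacle, only this bookkeeping.
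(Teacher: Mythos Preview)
Your proposal is correct and follows exactly the approach the paper indicates: the paper simply asserts that the formula follows from the Riemann--Hurwitz formula applied to the \belyi map, and you have supplied the straightforward bookkeeping. (A trivial cosmetic point: the paper's convention in the section on \belyi maps has black vertices over $0$ and white over $1$, the reverse of yours, but this is immaterial to the computation.)
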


\begin{corollary}
Let $p$ be a prime number, and $g(p)$ the genus of $X_0(p)$. Then:
$$\frac{p-13}{12}\leq g(p) \leq \frac{p+1}{12} \ . $$
\end{corollary}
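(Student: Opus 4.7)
The plan is to apply Theorem \ref{thm4} directly in the special case $N=p$ prime, then bound the resulting expression using the results already established about cusps and torsion points. All the ingredients are in hand; this will be a direct computation followed by an optimization over the possible values of $\nu_2(p)$ and $\nu_3(p)$.

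First I would record the relevant data for $N=p$. From Proposition \ref{prop:palpha} (with $\alpha=1$), $|E_{0,p}|=p+1$. From the corollary following Proposition \ref{prop:desccusps}, $\bip_{0,p}$ has exactly two cusps, $c_\infty$ of width $1$ and $c_0$ of width $p$, so the ramification sum over cusps is
\[
\sum_{w\geq 1} c_w(p)\,(w-1) = (1-1)+(p-1)=p-1.
\]
For the torsion data, the propositions on torsion points of order $2$ and $3$ give $\nu_2(p),\nu_3(p)\in\{0,2\}$ when $p\geq 5$ (and smaller values for $p=2,3$, which can be checked separately). Substituting all this into Theorem \ref{thm4} yields, for $p\geq 5$,
\[
2-2g(p)=2(p+1)-\tfrac{1}{2}(p+1-\nu_2(p))-\tfrac{2}{3}(p+1-\nu_3(p))-(p-1).
\]
Multiplying by $6$ and simplifying, this collapses to the clean formula
\[
12\,g(p)=p+1-3\nu_2(p)-4\nu_3(p).
\]

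The bounds then follow immediately: since $0\leq \nu_2(p)\leq 2$ and $0\leq \nu_3(p)\leq 2$, the quantity $3\nu_2(p)+4\nu_3(p)$ lies in $[0,14]$, giving
\[
\frac{p-13}{12}\leq g(p)\leq \frac{p+1}{12}.
\]
The small cases $p=2$ and $p=3$ have to be handled separately, but one can verify (using the data $\nu_2(2)=\nu_3(3)=1$) that the same inequalities hold, and in fact both extremal values of $g$ are attained for infinitely many primes: the upper bound whenever $p\not\equiv 1\pmod{3}$ and $p\not\equiv 1\pmod{4}$, the lower whenever $p\equiv 1\pmod{12}$.

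I do not anticipate a genuine obstacle: everything needed has been proven in the previous subsections. The only minor subtlety is making sure the Riemann--Hurwitz-type identity of Theorem \ref{thm4} is applied with the right accounting of the univalent white and black vertices (so that the contributions of the torsion points correctly drop out of the ramification sum), and checking the formula also holds for $p=2,3$ where $\nu_2$ and $\nu_3$ take a non-generic value; both are routine.
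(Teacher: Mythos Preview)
Your proof is correct and follows essentially the same route as the paper: both plug $|E_{0,p}|=p+1$ and the two-cusp data into Theorem~\ref{thm4}, then use the bounds $0\leq\nu_2(p),\nu_3(p)\leq 2$. The only cosmetic difference is that you solve directly for $12g(p)=p+1-3\nu_2(p)-4\nu_3(p)$, whereas the paper first bounds $\chi(X_0(p))$ between $\tfrac{1}{6}(11-p)$ and $\tfrac{1}{6}(25-p)$ and then converts via $\chi=2-2g$; these are the same computation.
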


\begin{proof}
For $p$ prime, $0\leq \nu_2(p)\leq 2$ and $0\leq \nu_3(p)\leq 2$, and there are only two cusps: $c_0$ of width $p$ and $c_1$ of width $1$. Hence, applying Theorem \ref{thm4} one gets: 
$$\frac{1}{6}(11-p)\leq \chi(X_0(p))\leq \frac{1}{6}(25-p) \ ,$$
and $\chi(X_0(p))=2-2g(X_0(p))$ gives the desired bounds. 
\end{proof}

Some interesting properties of the sequence of genera of the classical modular curves, like bounds, modularity properties and densities are investigated in \cite{genera}.

%%%%%%%%%%%%%%%%%%%%%%%%
\subsubsection{Moduli problem of level-$N$ structures on elliptic curves}

The classical modular curves $Y_0(N)$ are known to solve a moduli problem. 

Let $E$ be an elliptic curve over a perfect field $k$, typically the field of rational numbers $\mathbb{Q}$, and let $N\in\mathbb{N}_{>0}$. A cyclic subgroup of $E$ of order $N$ is a Zariski-closed subset $S$ of $E$ such that $S(\bar{k})$ is a cyclic subgroup of $E(\bar{k})$ of order $N$, where $\bar{k}$ is the algebraic closure of $k$. Consider the pairs $(E,S)$ up to isomorphism, where
$$f:(E,S)\rightarrow (E',S')$$
is an isomorphism if $f:E\rightarrow E'$ an isomorphism such that $f(S)=S'$. It is the moduli problem we are interested in, and has the modular curve $Y_0(N)$ as solutions. See \cite{milne} for a more detailed discussion.

Over the complex numbers, the elliptic curves correspond to the projective lattices in the complex plane. A projective lattice $L_1$ in a two dimensional real vector space, and modulo $\psl$, corresponds to an elliptic curve $E_1$. The projective lattices $N$-hyperdistant from $L_1$ correspond in turn to the cyclic subgroups of $E$ of order $N$. Hence the dessins d'enfants describe the part of the structure of the moduli spaces $Y_0(N)$ which concerns the \emph{cyclic subgroups of order $N$}, while the complex surface associated with those dessins brings the \emph{moduli space of complex structures} to the picture.

%%%%%%%%%%%%%%%%%%%%%%%%%%%%%%%%%%%%%%%
\subsection{Hauptmoduln and \belyi maps}
We now focus on the special class of Hecke congruence subgroups that appear in the Monstrous Moonshine correspondence \cite{conwaynorton}, namely, those of genus $0$.

%%%%%%%%%%%%%%%%%%%%%%%%%
\subsubsection{Hauptmoduln for genus zero algebraic curves}
Let $\tilde{X}$ be an analytic projective irreducible curve embedded in some $\mathbb{P}^n(\mathbb{C})$. 
There exists a non-singular model $X$ of $\tilde{X}$ which has the same field of meromorphic functions as $X$: $$\mathcal{M}_X=\mathcal{M}_{\tilde{X}}\ .$$ 
It is a classical result (see for example \cite{fulton}) that \emph{over the complex numbers}, the following holds:
\[
\mbox{
$\tilde{X}$ is rational $\qquad \Leftrightarrow \qquad$
$X$ is biholomorphic to $P^1(\mathbb{C})$,
}
\]
that is, over $\mathbb{C}$ the curve $\tilde{X}$ is rational if and only if $X$ has genus zero. 
Whenever it is the case, $\tilde{X}$ can be parametrised by a rational function of a single variable (which lives on $\mathbb{P}^1$). 

\begin{definition}\label{haupt}
Let $X$ be a genus-zero Riemann surface. 
Its field of meromorphic functions $\mathcal{M}_X$ is the field of rational fractions in a single meromorphic function over $X$. 
Such a function is called a {\bf Hauptmodul} (or principal modulus) for $X$.
\end{definition}

%%%%%%%%%%%%%%%%%%%%%%%%%

\subsubsection{Bely\u{\i} maps and replication Formul\ae\ for $J$}

From now on, for all $N\in\mathbb{N}$ let $X_0(N)$ be the analytic complex curve $X_0(N)(\mathbb{C})$.

\paragraph{Choice of a coordinate}

Exactly $15$ classical modular curves $X_0(N)$ are of genus zero (and hence rational): those corresponding to
$$N\in\{1,2,3,4,5,6,7,8,9,10,12,13,16,18,25\}\ .$$
For all $N$ in this set, there is a conformal isomorphism (whose existence is provided by the open-mapping theorem):
$$J_{0,N}:X_0(N)\rightarrow \mathbb{P}^1(\mathbb{C})\ .$$
The function $J_{0,N}$ is a Hauptmodul for the curve $X_0(N)$:
$$\left.\begin{array}{ccc} X_0(N) & \rightarrow & \mathbb{P}^1(C)\\
\tau & \mapsto & J_{0,N}(\tau)=:t \end{array}\right.$$

The analytic curve $X_0(N)$ is the (smoothened compactification of the) quotient of the upper-half plane under the action of $\He(N)$. Every choice of fundamental domain for $\He(N)$ in $\mathbb{H}$ defines a chart on $X_0(N)$, on which $J_{0,N}$ can be explicitly expressed in terms of Dedekind's $\eta$ (see table 3 of \cite{conwaynorton}).

\paragraph{Bely\u{\i} maps}

Recall that Klein's invariant $J_{0,1}$ defines a branched cover 
$$\beta_{0,N}:X_0(N)\rightarrow\mathbb{P}^1(\mathbb{C})$$ 
which ramifies only over $1$, $0$ and $\infty$, with ramification order dividing $2$ over $1$, and $3$ over $0$. Since $X_0(N)$ is biholomorphic to $\mathbb{P}^1(\mathbb{C})$, this map can be expressed as:
$$\beta_{0,N}:\mathbb{P}^1(\mathbb{C})\rightarrow\mathbb{P}^1(\mathbb{C})\ .$$ 
It is a Bely\u{\i} map which corresponds to the dessin d'enfant $\bip_{0,N}$. 

This map $\beta_{0,N}$ is a rational function of $t$, by definition of a Hauptmodul. Note that:
\begin{enumerate}
\item The set of preimages of $0$ is the set of black vertices of the corresponding dessin. The multiplicity of a root is the valence of the corresponding vertex. The set of poles is the set of faces, and the multiplicity of a pole is the width of the corresponding cusp. The set of preimages of $1$ is the set of white vertices, and the multiplicity of a preimage of $1$ is the valence of the corresponding vertex. 
\item Let $N\in\mathbb{N}_{>0}$. Since the dessin d'enfant $\bip_{0,N}$ is of type $(2,3,N)$, the multiplicities of the roots of the numerator of $\beta_{0,N}$ are either one or three, and the multiplicities of the preimagesof $1$, either one or two. 
\item The classical modular curves $X_0(N)$ all have a model over $\mathbb{Q}$: they can be defined by a polynomial equation
$$\Phi_N(X,Y)=0$$
where $\Phi_N\in\mathbb{Q}[X,Y]$, and such that $(J(\tau),J(N\tau))$ is a point of $X_0(N)$. The function $J(\tau)$ is  is a rational fraction of the Hauptmodul $t=J_{0,N}(\tau)$, with rational (equivalently, integer) coefficients: $J(\tau)=\beta_{0,N}(t)$. 
\end{enumerate}

\begin{figure}[h!]
\centering
\begin{tabular}{|c|c|}
\hline
$N$ & $J(\tau)=\beta_{0,N}(t)$ (Bely\u{\i} map) \\
\hline\hline
$1$ & $t$ \\
\hline 
$2$ & $\frac{(t+256)^3}{1728t^2}$ \\
\hline 
$3$ & $\frac{27(t+9)^3(t+1)}{1728t^3}$ \\
\hline 
$4$ & $\frac{16(t^2+16t+16)^3}{1728(t+1)t^4}$ \\
\hline 
$5$ & $\frac{(t^2+250t+3125)^3}{1728t^5}$ \\
\hline 
$6$ & $\frac{(2t+3)^3(8t^3+252t^2+486t+243)^3}{1728t^6(8t+9)^3(t+1)^2}$ \\
\hline 
$7$ & $\frac{(t^2+13t+49)(t^2+245t+2401)^3}{1728t^7}$ \\
\hline 
$8$ & $\frac{4(t^4+64t^3+320t^2+512t+256)^3}{1728t^8(t+2)^2(t+1)}$ \\
\hline 
$9$ & $\frac{3(t+3)^3(t^3+81t^2+243t+243)^3}{1728t^9(t^2+3t+3)}$ \\
\hline 
$10$ & $ \frac{(t^6+260t^5+6400t^4+64000t^3+320000t^2+800000t+800000)^3}{1728t^{10}(t+5)^5(t+4)^2}$ \\
\hline 
$12$ & $ \frac{(3t^6+252t^5+1464t^4+3456t^3+4032t^2+2304t+512)^3(3t^2+12t+8)^3}{1728t^{12}(3t+4)^4(t+2)^3(t+1)^3(3t+2)}$ \\
\hline 
$13$ & $ \frac{(t^4+247t^3+3380t^2+15379t+28561)^3(t^2+5t+13)}{1728t^{13}}$ \\
\hline 
$16$ & $ \frac{2(t^8+128t^7+1408t^6+6656t^5+17664t^4+28672t^3+28672t^2+16384t+4096)^3}{1728t^{16}(t+2)^4(t^2+2t+2)(t+1)}$ \\
\hline 
$18$ & $ \frac{(t^9+9t^8+270t^7+1728t^6+5832t^5+13122t^4+21870t^3+26244t^2+19683t+6561)^3(t^3+3t^2+9t+9)^3}{1728t^{18}(t+3)^9(t^2+3t+3)^2(t^2+3)^2(t+1)}$ \\
\hline 
$25$ & $\frac{(t^{10}+250t^9+4375t^8+35000t^7+178125t^6+631250t^5+1640625t^4+3125000t^3+4296875t^2+3906250t+1953125)^3}{1728t^{25}(t^4+5t^3+15t^2+25t+25)}$ \\
\hline 
\end{tabular}
\caption{These explicit expressions of the Bely\u{\i} maps have been obtained from those in \cite{hoeij}, through $\mathrm{PSL}_2(\mathbb{C})$-rotations of $t$. It is an interesting fact that the the coefficients appearing in these $\beta_{0,N}$ (for these choices of $t$) are not only integer, but positive integers. We are not aware of any explanation of this fact in the litterature.}
\end{figure}

\paragraph{Divisibility relations}

Let $N\in\mathbb{N}$, and let $d$ be a divisor of $N$. Since there exists a canonical projection:
$$\bip_{0,N}\rightarrow\bip_{0,k}\ ,$$
the map $J_{0,k}$ defines a function on $X_0(N)$ (through the reflection principle). Again by definition of a Hauptmodul, the induced $J_{0,k}$ is rational fraction of $J_{0,N}$. 

Let $k|N$ and $l|k$. The following diagram commutes.
$$\xymatrix{
& X_{0}(N) \ar[dr]^{\beta_{N,k}} \ar[dl]_{\beta_{N,l}} &  \\
X_{0}(l) \ar[dr]_{\beta_{l,1}} & \ & X_{0}(k) \ar[dl]^{\beta_{k,1}} \\
 & X_0(1) &  }
$$

%%%%%%%%%%%%%%%%%%%%%%%%%%%%%%%%%%%%%%%%
%%%%%%%%%%%%%%%%%%%%%%%%%%%%%%%%%%%%%

\section{Genus zero Hecke groups}\label{sec3}

We tabulate fundamental domains, the dessins and the cusps (as cycles of projective lattices) of the 15 genus zero Hecke subgroups, which are exactly the Hecke subgroups of $\psl$ appearing in the moonshine correspondence. 

In the simplest cases (up to $N=9$), on each edge in the dessin we write the name of the corresponding projective lattice (following the rules described in Appendix \ref{app}). For $N>9$ we only write the name of a single projective lattice in each cusp directly on the graph, to avoid being too cumbersome. However, the knowledge of where this projective lattice sits on the graph together with the tabulation of the cusps on the side is enough to keep track of which lattice corresponds to which edge in the dessin. We leave to the reader to check and get familiar with this general rule in the $9$ first cases, where the correspondence edge/projective lattice is completely explicit.

\subsection*{$\He(1)$}

The index in $\psl$ is $1$.

\begin{figure}[h!]
\begin{subfigure}{.6\textwidth}
\centering
\includegraphics[scale=0.35]{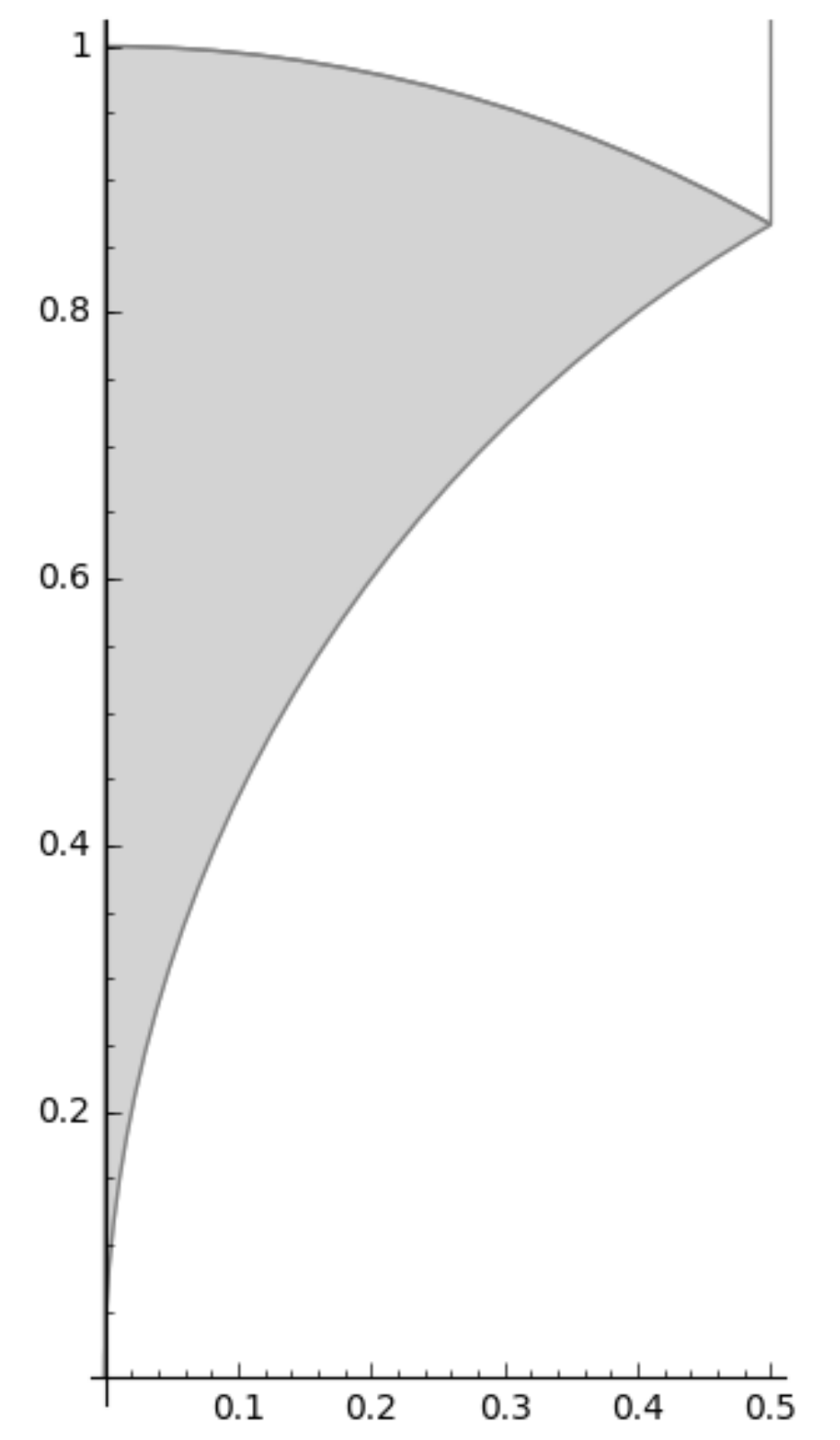}
\end{subfigure}
\begin{subfigure}{.3\textwidth}
\begin{tikzpicture}[scale=1.5]
\draw[fill=black] (0,0) circle (0.1);
\draw (0,1) circle (0.1);
\draw (0,0) arc(270:92:0.5);
\node (c) at (-0.5,0.5) {$L_1$};
\end{tikzpicture}
\end{subfigure}
\end{figure}

\begin{figure}[h!]
\centering
\begin{tabular}{|c|c|c|}
\hline
Cusp & Representative & Width \\
\hline
\hline
$\{[0:1]\}=\{L_1\}$ & $\infty$ & $1$ \\
\hline
\end{tabular}
\end{figure}

\clearpage

\subsection*{$\He(2)$}

The index in $\psl$ is $3$.

\begin{figure}[h!]
\begin{subfigure}{.6\textwidth}
\centering
\includegraphics[scale=0.35]{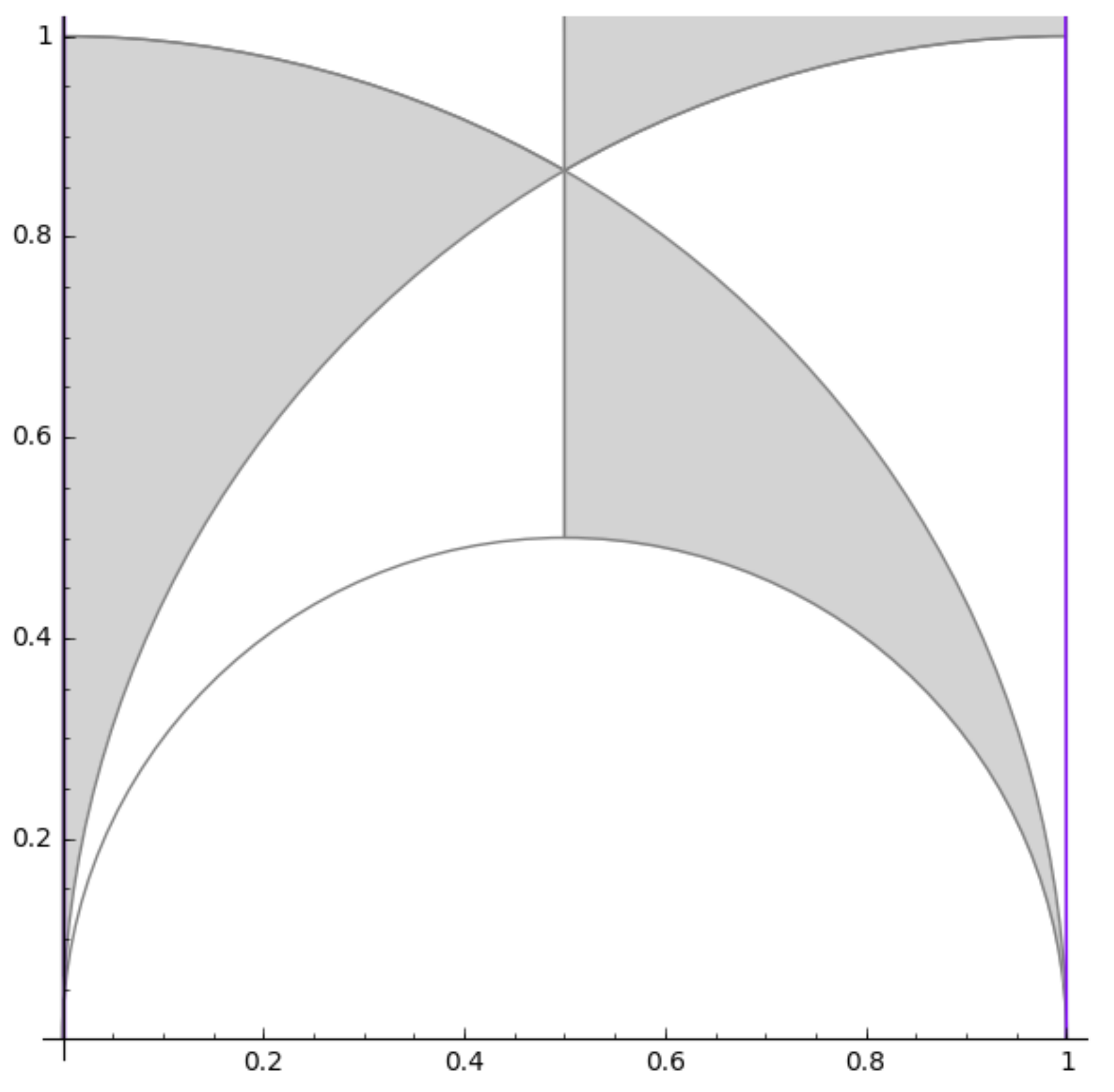}
\end{subfigure}
\begin{subfigure}{.3\textwidth}
\begin{tikzpicture}[scale=1.5]
\draw[fill=black] (0,0) circle (0.1);
\draw (0,1) circle (0.1);
\draw (0,-1) circle (0.1);
\draw (0,0.5) circle(0.5);
\draw (0,0)--(0,-0.9);
\node (c) at (-0.5,0.5) {$L_2$};
\node (c) at (0.5,0.5) {$L_{1/2}$};
\node (c) at (0,-0.5) {$L_{1/2,1/2}$};
\end{tikzpicture}
\end{subfigure}
\end{figure}

\begin{figure}[h!]
\centering
\begin{tabular}{|c|c|c|}
\hline
Cusp & Representative & Width \\
\hline
\hline
$([0:1],[1:1])=(L_{2},L_{1/2,1/2}) $ & $0$ & $2$ \\
\hline
$([1:0])=(L_{1/2})$ & $\infty$ & $1$ \\
\hline
\end{tabular}
\end{figure}

\subsection*{$\He(3)$}

The index in $\psl$ is $4$.

\begin{figure}[h!]
\begin{subfigure}{.6\textwidth}
\centering
\includegraphics[scale=0.35]{GO3}
\end{subfigure}
\begin{subfigure}{.3\textwidth}
\begin{tikzpicture}[scale=1.5]
\draw[fill=black] (0,0) circle (0.1);
\draw[fill=black] (0,-2) circle (0.1);
\draw (0,1) circle (0.1);
\draw (0,-1) circle (0.1);
\draw (0,0.5) circle(0.5);
\draw (0,0)--(0,-0.9);
\draw (0,-1.1)--(0,-2);
\node (c) at (-0.5,0.5) {$L_3$};
\node (c) at (0.5,0.5) {$L_{1/3}$};
\node (c) at (0,-0.5) {$L_{1/3,2/3}$};
\node (c) at (0,-1.5) {$L_{1/3,1/3}$};
\end{tikzpicture}
\end{subfigure}
\end{figure}

\begin{figure}[h!]
\centering
\begin{tabular}{|c|c|c|}
\hline
Cusp & Representative & Width \\
\hline
\hline
$([0:1],[1:1],[2:1])=(L_3,L_{1/3,1/3};L_{1/3,2/3})$ & $0$ & $3$ \\
\hline
$([1:0])=(L_{1/3})$ & $\infty$ & $1$ \\
\hline
\end{tabular}
\end{figure}

\clearpage

\subsection*{$\He(4)$}

The index in $\psl$ is $6$.

\begin{figure}[h!]
\begin{subfigure}{.6\textwidth}
\centering
\includegraphics[scale=0.35]{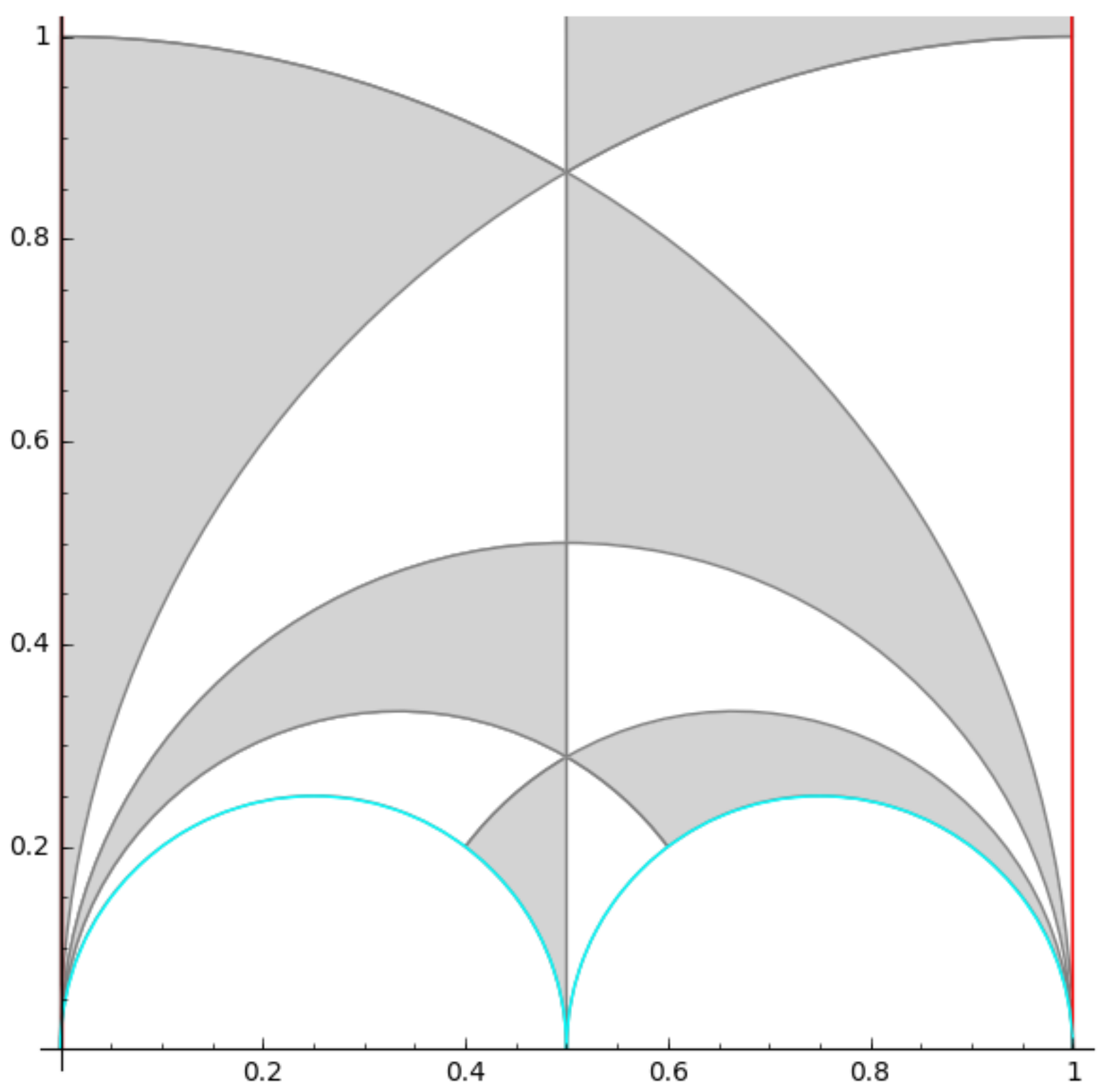}
\end{subfigure}
\begin{subfigure}{.3\textwidth}
\begin{tikzpicture}[scale=1.5]
\draw[fill=black] (0,0) circle (0.1);
\draw[fill=black] (0,-2) circle (0.1);
\draw (0,1) circle (0.1);
\draw (0,-1) circle (0.1);
\draw (0,-3) circle (0.1);
\draw (0,0.5) circle(0.5);
\draw (0,-2.5) circle(0.5);
\draw (0,0)--(0,-0.9);
\draw (0,-1.1)--(0,-2);
\node (c) at (-0.5,0.5) {$L_4$};
\node (c) at (0.5,0.5) {$L_{1/4}$};
\node (c) at (0,-1.5) {$L_{1/4,1/4}$};
\node (c) at (0.5,-2.5) {$L_{1,1/2}$};
\node (c) at (0,-0.5) {$L_{1/4,3/4}$};
\node (c) at (-0.5,-2.5) {$L_{1/4,1/2}$};
\end{tikzpicture}
\end{subfigure}
\end{figure}

\begin{figure}[h!]
\centering
\begin{tabular}{|c|c|c|}
\hline
Cusp & Representative & Width \\
\hline
\hline
$([0:1],[1:1],[2:1],[3:1])=(L_4,L_{1/4,1/4},L_{1,1/2},L_{1/4,3/4})$ & $0$ & $4$ \\
\hline
$[1:2]=L_{1/4,1/2}$ & $1/2$ & $1$ \\
\hline
$[1:0]=L_{1/4}$ & $\infty$ & $1$ \\
\hline
\end{tabular}
\end{figure}

\subsection*{$\He(5)$}

The index in $\psl$ is $6$.

\begin{figure}[h!]
\begin{subfigure}{.6\textwidth}
\centering
\includegraphics[scale=0.35]{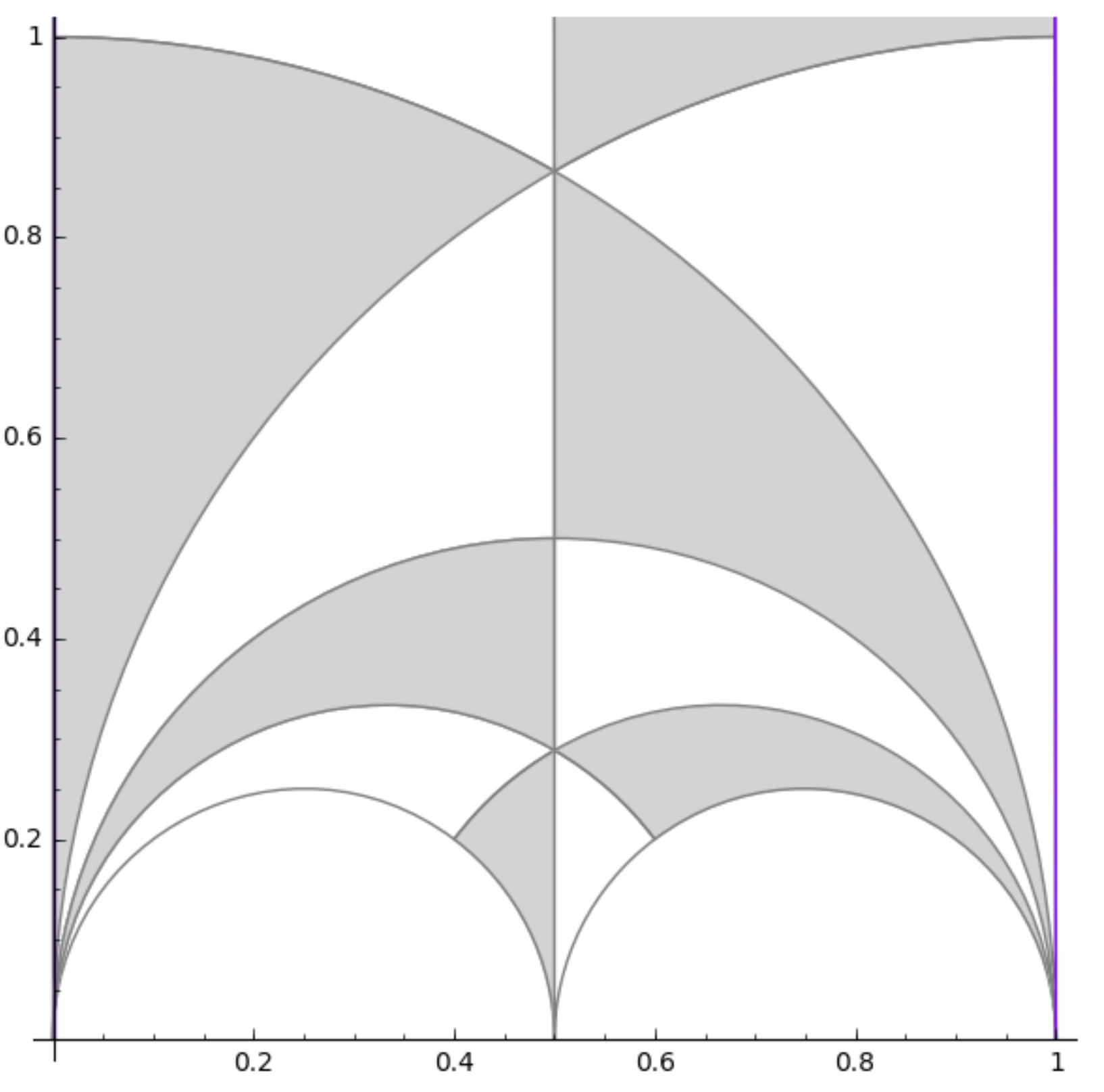}
\end{subfigure}
\begin{subfigure}{.3\textwidth}
\begin{tikzpicture}[scale=1.5]
\draw[fill=black] (0,0) circle (0.1);
\draw[fill=black] (0,-2) circle (0.1);
\draw (0,1) circle (0.1);
\draw (0,-1) circle (0.1);
\draw (1,-3) circle (0.1);
\draw (-1,-3) circle (0.1);
\draw (0,0.5) circle(0.5);
\draw (0,-2)--(-0.94,-2.94);
\draw (0,-2)--(0.94,-2.94);
\draw (0,0)--(0,-0.9);
\draw (0,-1.1)--(0,-2);
\node (c) at (-0.5,0.5) {$L_5$};
\node (c) at (0.5,0.5) {$L_{1/5}$};
\node (c) at (0,-1.5) {$L_{1/5,1/5}$};
\node (c) at (-0.6,-2.5) {$L_{1/5,3/5}$};
\node (c) at (0.6,-2.5) {$L_{1/5,2/5}$};
\node (c) at (0,-0.5) {$L_{1/5,4/5}$};
\end{tikzpicture}
\end{subfigure}
\end{figure}

\begin{figure}[h!]
\centering
\begin{tabular}{|c|c|c|}
\hline
Cusp & Representative & Width \\
\hline
\hline
$([0:1],...,[4:1])=(L_5,L_{1/5,1/5},L_{1/5,3/5},L_{1/5,2/5},L_{1/5,4/5})$ & $0$ & $5$ \\
\hline
$([1:0])=(L_{1/5})$ & $\infty$ & $1$ \\
\hline
\end{tabular}
\end{figure}

\clearpage

\subsection*{$\He(6)$}

The index in $\psl$ is $12$.

\begin{figure}[h!]
\begin{subfigure}{.6\textwidth}
\centering
\includegraphics[scale=0.45]{GO6}
\end{subfigure}
\begin{subfigure}{.3\textwidth}
\begin{tikzpicture}[scale=1.5]
\draw[fill=black] (0,0) circle (0.1);
\draw[fill=black] (0,-2) circle (0.1);
\draw (0,1) circle (0.1);
\draw (-0.5,-2.5) circle (0.1);
\draw (0.5,-2.5) circle (0.1);
\draw (0,-1) circle (0.1);
\draw (0,-3) circle (0.1);
\draw (0,-4) circle (0.1);
\draw[fill=black] (1,-3) circle (0.1);
\draw[fill=black] (-1,-3) circle (0.1);
\draw (0,0.5) circle(0.5);
\draw (0,-2)--(-1,-3);
\draw (0,-2)--(1,-3);
\draw (0,0)--(0,-0.9);
\draw (0,-1.1)--(0,-2);
\draw (-1,-3)--(1,-3);
\draw (-1,-3) arc(180:360:1);
\node (c) at (-0.5,0.5) {$L_6$};
\node (c) at (0.5,0.5) {$L_{1/6}$};
\node (c) at (0,-1.5) {$L_{1/6,1/6}$};
\node (c) at (-1.3,-2.75) {$L_{2/3,1/3}$};
\node (c) at (1.2,-3.7) {$L_{3/2,1/2}$};
\node (c) at (0.75,-2.25) {$L_{2/3,2/3}$};
\node (c) at (0,-0.5) {$L_{1/6,5/6}$};
\node (c) at (-0.75,-2.25) {$L_{1/6,2/3}$};
\node (c) at (1.3,-2.75) {$L_{1/6,1/3}$};
\node (c) at (-1.2,-3.7) {$L_{1/6,1/2}$};
\node (c) at (-0.6,-3.2) {$L_{3/2}$};
\node (c) at (0.6,-3.2) {$L_{2/3}$};
\end{tikzpicture}
\end{subfigure}
\end{figure}

\begin{figure}[h!]
\centering
\begin{tabular}{|c|c|c|}
\hline
Cusp & Representative & Width \\
\hline
\hline
$([0:1],...,[5:1])=(L_{6},L_{1/6,1/6},L_{2/3,1/3},L_{3/2,1/2},L_{2/3,2/3},L_{1/6,5/6})$ & $0$ & $6$ \\
\hline
$([1:3],[2:3])=(L_{1/6,1/2},L_{2/3})$ & $1/3$ & $2$ \\
\hline
$([1:2],[3:2],[5:2])=(L_{1/6,1/3},L_{3/2},L_{1/6,2/3})$ & $1/2$ & $3$ \\
\hline
$([1:0])=(L_{1/6})$ & $\infty$ & $1$ \\
\hline
\end{tabular}
\end{figure}

\clearpage

\subsection*{$\He(7)$}

The index in $\psl$ is $8$.

\begin{figure}[h!]
\begin{subfigure}{.6\textwidth}
\centering
\includegraphics[scale=0.4]{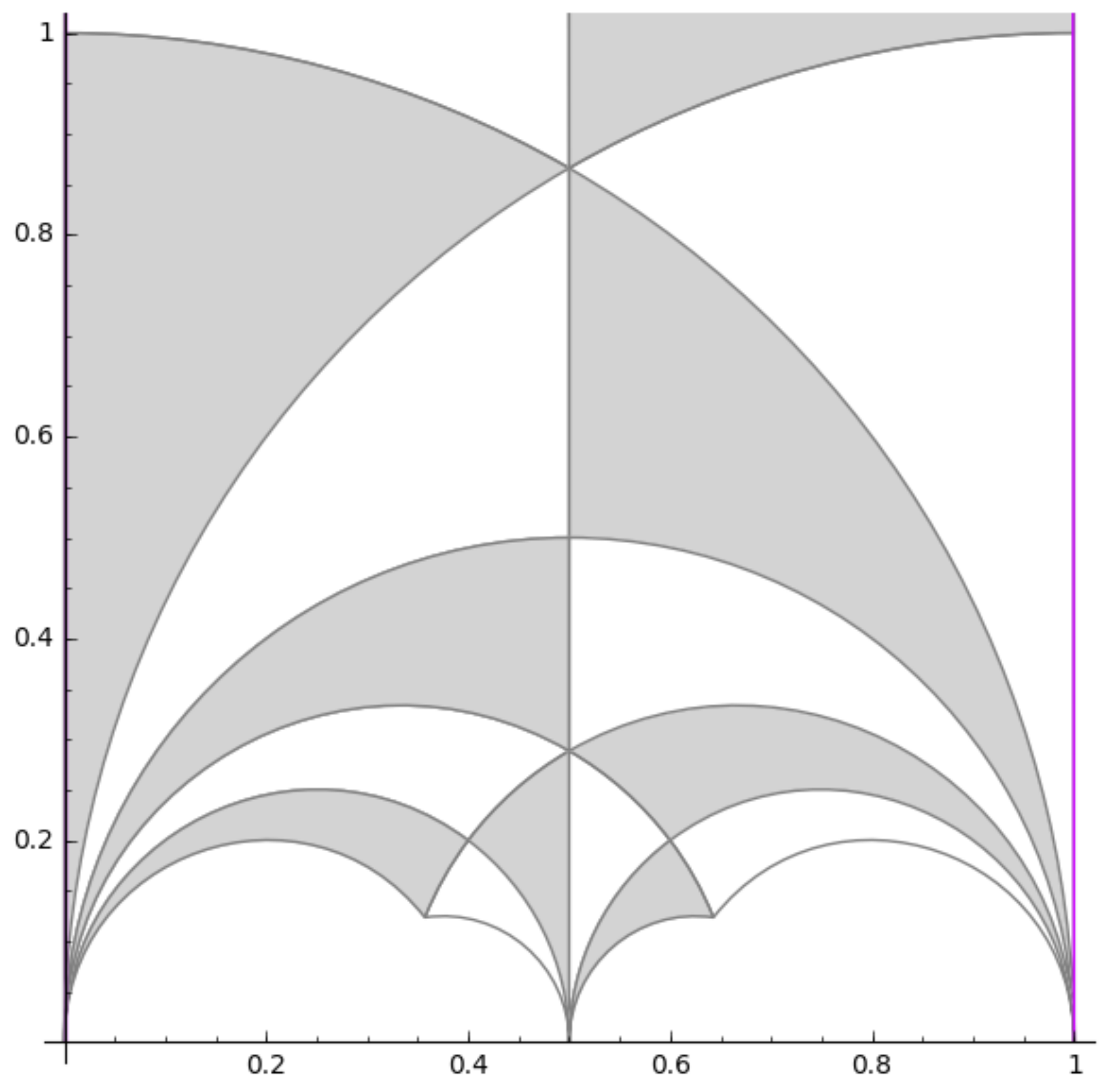}
\end{subfigure}
\begin{subfigure}{.3\textwidth}
\begin{tikzpicture}[scale=1.5]
\draw[fill=black] (0,0) circle (0.1);
\draw[fill=black] (0,-2) circle (0.1);
\draw (0,1) circle (0.1);
\draw (0,-1) circle (0.1);
\draw[fill=black] (1,-3) circle (0.1);
\draw[fill=black] (-1,-3) circle (0.1);
\draw (0,0.5) circle(0.5);
\draw (0.5,-2.5) circle(0.1);
\draw (-0.5,-2.5) circle(0.1);
\draw (0,-2)--(-0.94,-2.94);
\draw (0,-2)--(0.94,-2.94);
\draw (0,0)--(0,-0.9);
\draw (0,-1.1)--(0,-2);
\node (c) at (-0.5,0.5) {$L_7$};
\node (c) at (0.5,0.5) {$L_{1/7}$};
\node (c) at (0,-1.5) {$L_{1/7,1/7}$};
\node (c) at (-1.3,-2.75) {$L_{1/7,4/7}$};
\node (c) at (0.75,-2.25) {$L_{1/7,3/7}$};
\node (c) at (0,-0.5) {$L_{1/7,6/7}$};
\node (c) at (-0.75,-2.25) {$L_{1/7,5/7}$};
\node (c) at (1.3,-2.75) {$L_{1/7,2/7}$};
\end{tikzpicture}
\end{subfigure}
\end{figure}

\begin{figure}[h!]
\centering
\begin{tabular}{|c|c|c|}
\hline
Cusp & Representative & Width \\
\hline
\hline
$([0:1],...,[6:1])=$ & $0$ & $7$ \\
$(L_{7},L_{1/7,1/7},L_{1/7,4/7},L_{1/7,5/7},L_{1/7,2/7},L_{1/7,3/7},L_{1/7,6/7})$ & & \\
\hline
$([1:0])=(L_{1/7})$ & $\infty$ & $1$ \\
\hline
\end{tabular}
\end{figure}

\clearpage

\subsection*{$\He(8)$}

The index in $\psl$ is $12$.

\begin{figure}[h!]
\begin{subfigure}{.6\textwidth}
\centering
\includegraphics[scale=0.5]{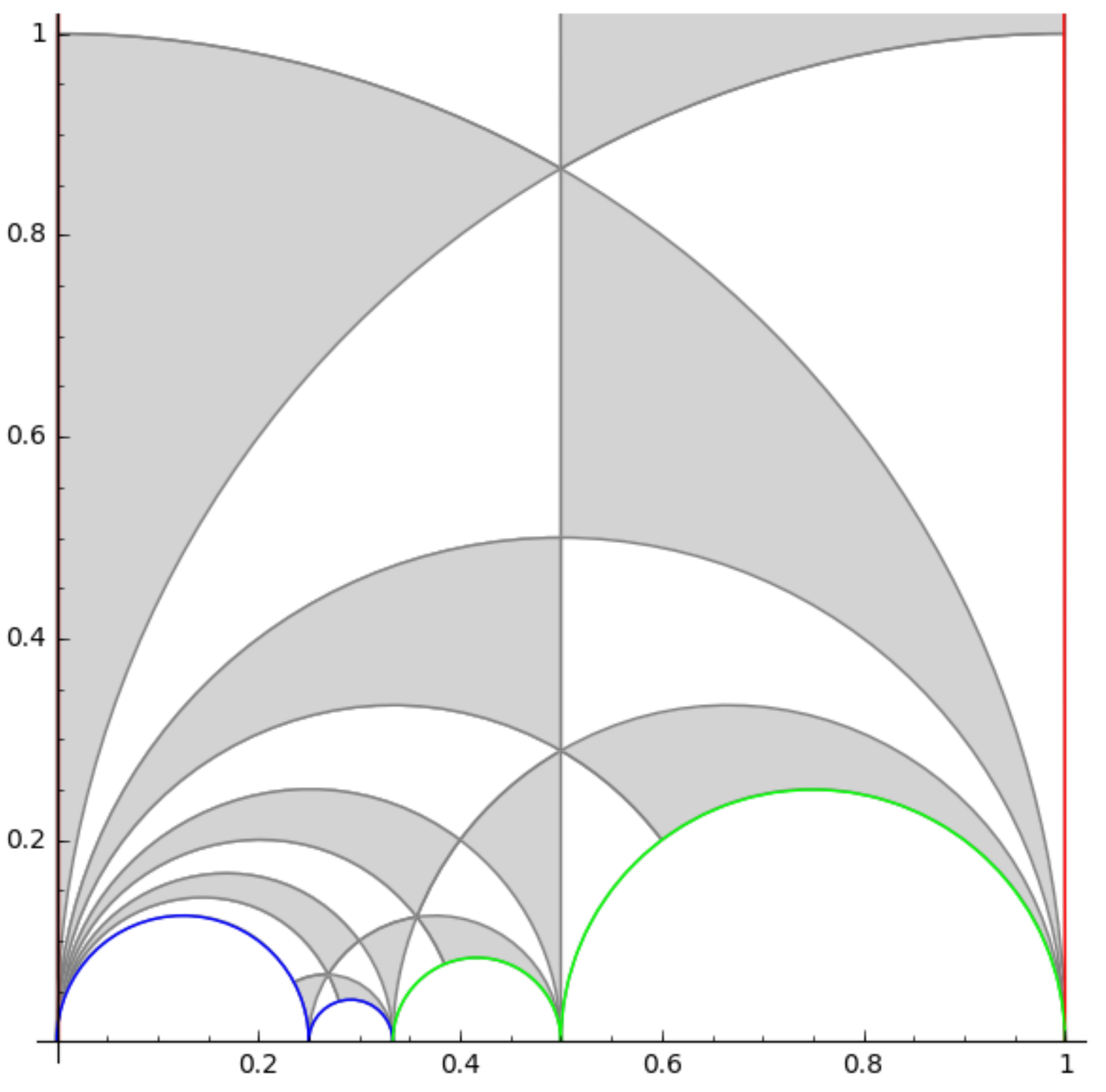}
\end{subfigure}
\begin{subfigure}{.3\textwidth}
\begin{tikzpicture}[scale=1.5]
\draw[fill=black] (0,-1) circle (0.1);
\draw[fill=black] (0,-2) circle (0.1);
\draw (0,0) circle (0.1);
\draw (0,-1.5) circle (0.1);
\draw[fill=black] (0,-3) circle (0.1);
\draw[fill=black] (0,-4) circle (0.1);
\draw (0.5,-2.5) circle (0.1);
\draw (-0.5,-2.5) circle (0.1);
\draw (0,-3.5) circle (0.1);
\draw (0,-5) circle (0.1);
\draw (0,-0.5) circle(0.5);
\draw (0,-2.5) circle(0.5);
\draw (0,-4.5) circle(0.5);
\draw (0,-1)--(0,-2);
\draw (0,-3)--(0,-4);
\node (c) at (-0.5,-0.5) {$L_8$};
\node (c) at (0.5,-0.5) {$L_{1/8}$};
\node (c) at (0,-1.75) {$L_{1/8,1/8}$};
\node (c) at (-0.5,-2.75) {$L_{1/2,1/4}$};
\node (c) at (0,-3.75) {$L_{1/8,3/8}$};
\node (c) at (0.5,-4.5) {$L_{2,1/2}$};
\node (c) at (0,-3.25) {$L_{1/8,5/8}$};
\node (c) at (0.5,-2.25) {$L_{1/2,3/4}$};
\node (c) at (0,-1.25) {$L_{1/8,7/8}$};
\node (c) at (-0.5,-4.5) {$L_{1/8,1/2}$};
\node (c) at (-0.5,-2.25) {$L_{1/8,3/4}$};
\node (c) at (0.5,-2.75) {$L_{1/2,1/4}$};
\end{tikzpicture}
\end{subfigure}
\end{figure}

\begin{figure}[h!]
\centering
\begin{tabular}{|c|c|c|}
\hline
Cusp & Representative & Width \\
\hline
\hline
$([0:1],...,[7:1])=$ & $0$ & $8$ \\
$(L_8,L_{1/8,1/8},L_{1/2,1/4},L_{1/8,3/8},L_{2,1/2},L_{1/8,5/8},L_{1/2,3/4},L_{1/8,7/8})$ & & \\
\hline
$([1:4])=(L_{1/8,1/2})$ & $1/4$ & $1$ \\
\hline
$([1:2],[1:6])=(L_{1/2,1/4},L_{1/8,3/4})$ & $1/2$ & $2$ \\
\hline
$([1:0]=(L_{1/8}))$ & $\infty$ & $1$ \\
\hline
\end{tabular}
\end{figure}

\clearpage

\subsection*{$\He(9)$}

The index in $\psl$ is $12$.

\begin{figure}[h!]
\begin{subfigure}{.5\textwidth}
\centering
\includegraphics[scale=0.4]{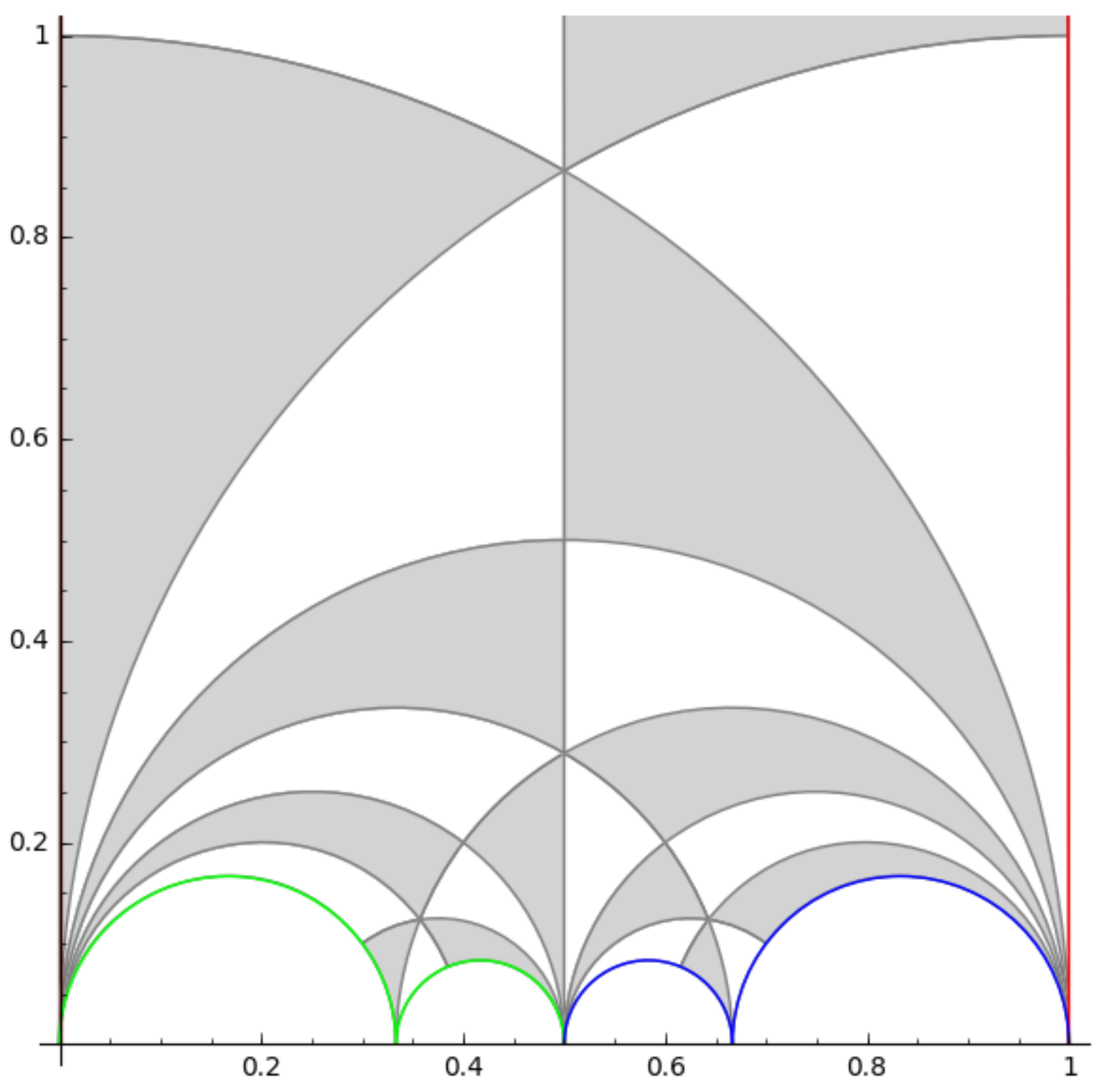}
\end{subfigure}
\begin{subfigure}{.3\textwidth}
\begin{tikzpicture}[scale=2]
\draw[fill=black] (0,1) circle(0.075);
\draw[fill=black] (0,0) circle(0.075);
\draw[fill=black] (-0.87,-0.5) circle(0.075);
\draw[fill=black] (0.87,-0.5) circle(0.075);
\draw (0,0.5) circle(0.075);
\draw (0,2) circle(0.075);
\draw (-0.43,-0.25) circle(0.075);
\draw (0.43,-0.25) circle(0.075);
\draw (-1.72,-1) circle(0.075);
\draw (1.72,-1) circle(0.075);
\draw (0,1.5) circle(0.5);
\draw (-1.3,-0.75) circle(0.5);
\draw (1.3,-0.75) circle(0.5);
\draw (0,0)--(0,1);
\draw (0,0)--(-0.87,-0.5);
\draw (0,0)--(0.87,-0.5);
\node (c) at (-0.5,1.5) {$L_9$};
\node (c) at (0.5,1.5) {$L_{1/9}$};
\node (c) at (0,0.25) {$L_{1/9,1/9}$};
\node (c) at (-0.45,-0.35) {$L_{1/9,5/9}$};
\node (c) at (-0.9,-1) {$L_{1,1/3}$};
\node (c) at (-0.6,-0.05) {$L_{1/9,7/9}$};
\node (c) at (0.45,-0.35) {$L_{1/9,2/9}$};
\node (c) at (1.7,-0.5) {$L_{1,2/3}$};
\node (c) at (0.6,-0.05) {$L_{1/9,4/9}$};
\node (c) at (0,0.75) {$L_{1/9,8/9}$};
\node (c) at (0.9,-1) {$L_{1/9,1/3}$};
\node (c) at (-1.7,-0.5) {$L_{1/9,2/3}$};
\end{tikzpicture}
\end{subfigure}
\end{figure}

\begin{figure}[h!]
\centering
\begin{tabular}{|c|c|c|}
\hline
Cusp & Representative & Width \\
\hline
\hline
$([0:1],...,[8:1])=(L_9,L_{1/9,1/9},L_{1/9,5/9}, L_{1,1/3},$ & $0$ & $9$ \\
$ L_{1/9,7/9}, L_{1/9,2/9}, L_{1,2/3}, L_{1/9,4/9}, L_{1/9,8/9})$ & & \\
\hline
$([1:6])=(L_{1/9,2/3})$ & $1/3$ & $1$ \\
\hline
$([1:3])=(L_{1/9,1/3})$ & $2/3$ & $1$ \\
\hline
$([1:0])=(L_{1/9})$ & $\infty$ & $1$ \\
\hline
\end{tabular}
\end{figure}

\clearpage

\subsection*{$\He(10)$}

The index in $\psl$ is $18$.

\begin{figure}[h!]
\begin{subfigure}{.55\textwidth}
\centering
\includegraphics[scale=0.5]{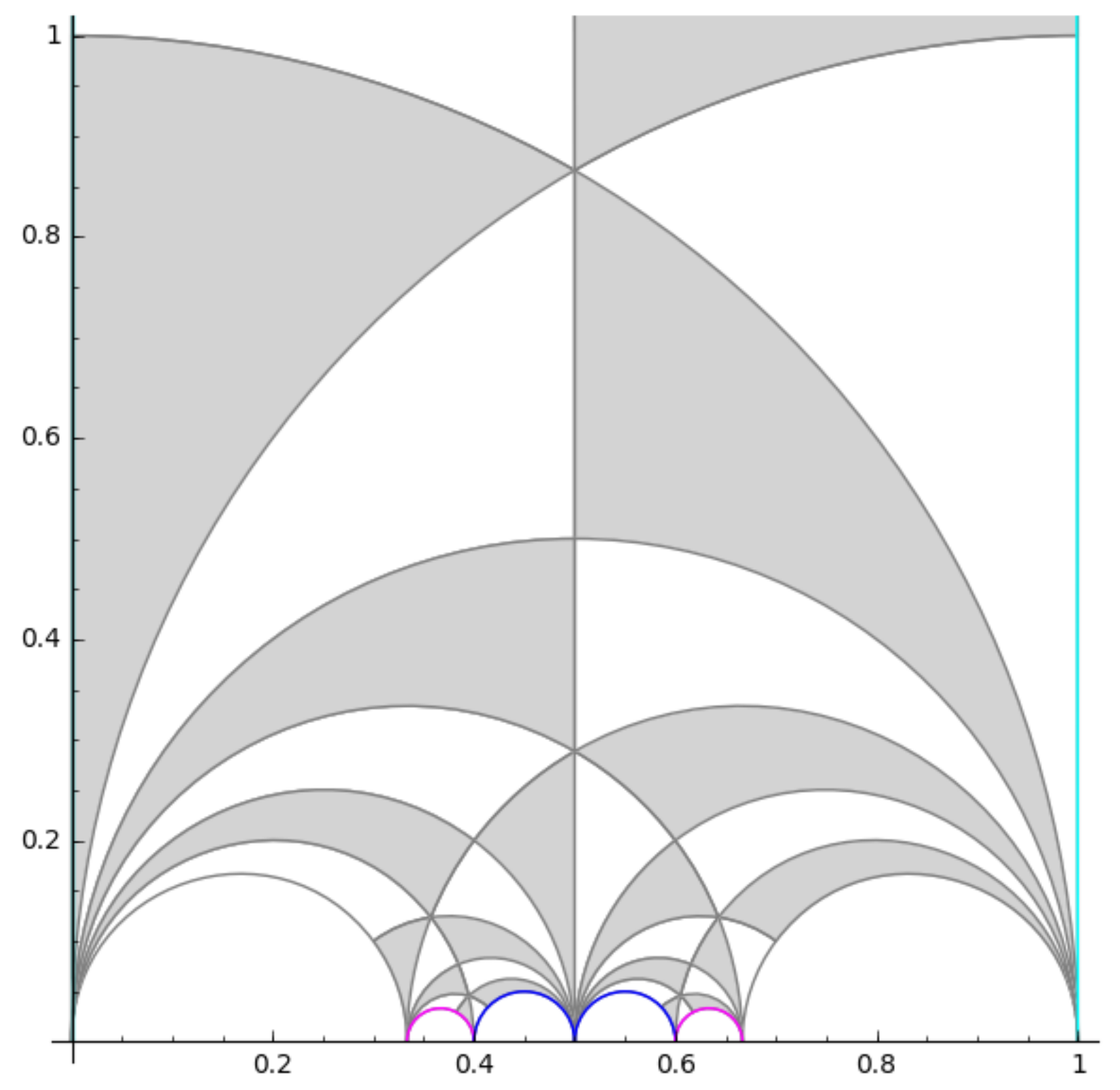}
\end{subfigure}
\begin{subfigure}{.3\textwidth}
\begin{tikzpicture}[scale=2]
\draw[fill=black] (0,1) circle(0.075);
\draw[fill=black] (0,0) circle(0.075);
\draw[fill=black] (-0.87,-0.5) circle(0.075);
\draw[fill=black] (0.87,-0.5) circle(0.075);
\draw[fill=black] (-0.5,-1.5) circle(0.075);
\draw[fill=black] (0.5,-1.5) circle(0.075);
\draw (0,0.5) circle(0.075);
\draw (0,2) circle(0.075);
\draw (-0.43,-0.25) circle(0.075);
\draw (0.43,-0.25) circle(0.075);
\draw (-1.5,-0.5) circle(0.075);
\draw (1.5,-0.5) circle(0.075);
\draw (0,-1.5) circle(0.075);
\draw (0,-2) circle(0.075);
\draw (-0.68,-1) circle(0.075);
\draw (0.68,-1) circle(0.075);
\draw (0,1.5) circle(0.5);
\draw (0,0)--(0,1);
\draw (0,0)--(-0.87,-0.5);
\draw (0,0)--(0.87,-0.5);
\draw (-0.5,-1.5)--(-0.87,-0.5);
\draw (0.5,-1.5)--(0.87,-0.5);
\draw (-0.5,-1.5)--(0.5,-1.5);
\draw (-1.4,-0.5)--(-0.87,-0.5);
\draw (1.4,-0.5)--(0.87,-0.5);
\draw (-0.5,-1.5) arc(180:360:0.5);
\draw (-0.5,1.5) node{$L_{10}$};
\draw (0.5,1.5) node{$L_{1/10}$};
\draw (0.5,-1.2) node{$L_{1/10,2/5}$};
\draw (-0.3,-1.8) node{$L_{1/10,1/2}$};
\end{tikzpicture}
\end{subfigure}
\end{figure}

\begin{figure}[h!]
\centering
\begin{tabular}{|c|c|c|}
\hline
Cusp & Representative & Width \\
\hline
\hline
$([0:1],...,[8:1])=(L_{10},L_{1/10,1/10},L_{2/5,1/5},L_{1/10,7/10},$ & $0$ & $10$ \\
$L_{2/5,3/5},L_{5/2,1/2},L_{2/5,2/5},L_{1/10,3/10},L_{2/5,4/5},L_{1/10,9/10}$ & & \\
\hline
$([1:5],[2:5])=(L_{1/10,1/2},L_{2/5})$ & $1/3$ & $2$ \\
\hline
$([1:2],[1:4],[5:2],[1:6],[1:8])=$ & $1/2$ & $5$ \\
$(L_{1/10,1/5},L_{1/10,2/5},L_{5/2},L_{1/10,3/5},L_{1/10,4/5})$ & & \\
\hline
$([1:0])=(L_{1/10})$ & $\infty$ & $1$ \\
\hline
\end{tabular}
\end{figure}

\clearpage

\subsection*{$\He(12)$}

The index in $\psl$ is $24$.

\begin{figure}[h!]
\begin{subfigure}{.7\textwidth}
\centering
\includegraphics[scale=0.5]{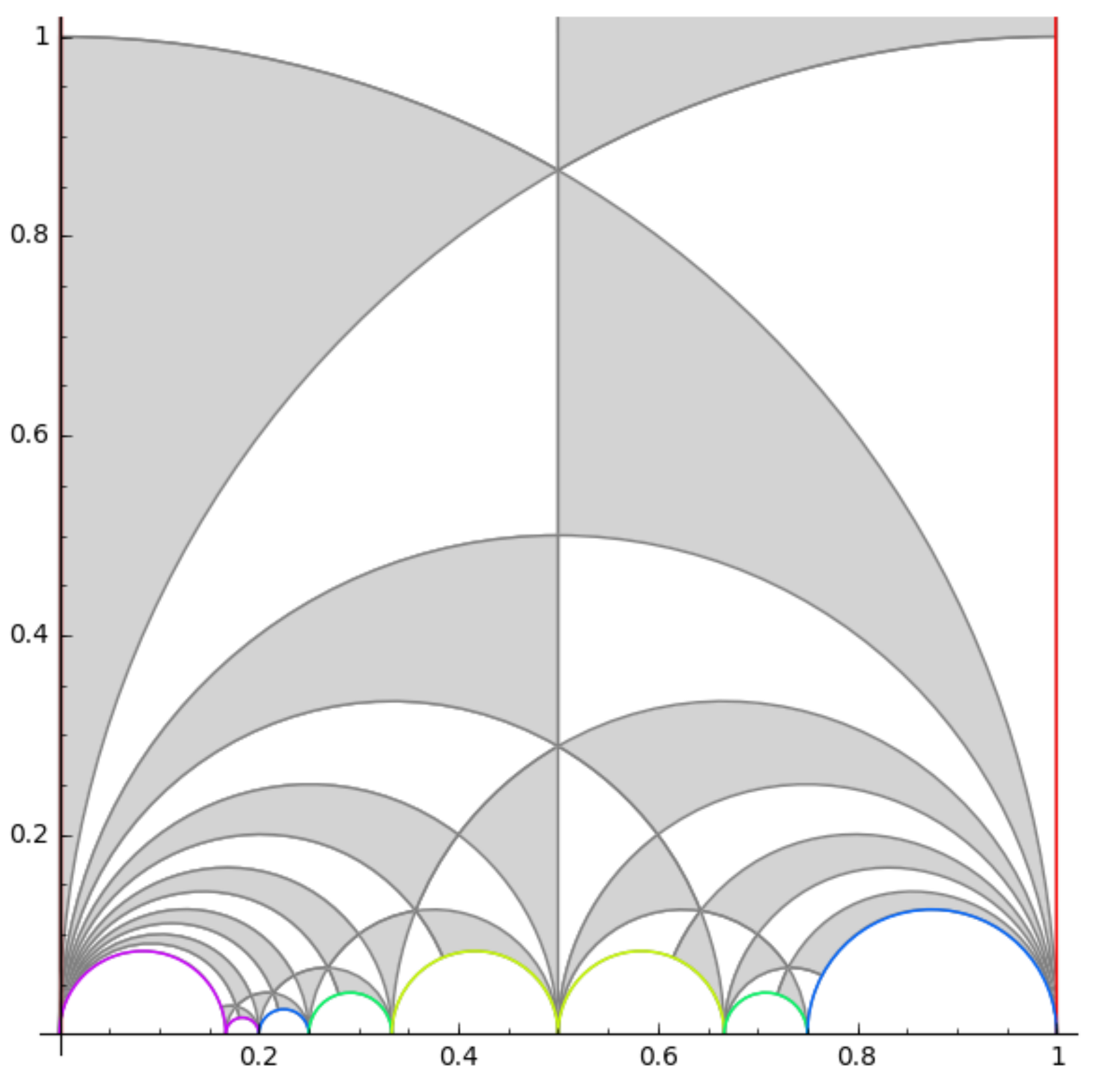}
\end{subfigure}
\begin{subfigure}{.2\textwidth}
\begin{tikzpicture}[scale=2]
\draw[fill=black] (-0.5,0) circle (0.075);
\draw[fill=black] (0.5,0) circle (0.075);
\draw[fill=black] (0,1) circle (0.075);
\draw[fill=black] (0,2) circle (0.075);
\draw[fill=black] (0,-2) circle (0.075);
\draw[fill=black] (0,-3) circle (0.075);
\draw[fill=black] (-0.5,-1) circle (0.075);
\draw[fill=black] (0.5,-1) circle (0.075);
\draw (0,3) circle (0.075);
\draw (0,1.5) circle (0.075);
\draw (-0.25,0.5) circle (0.075);
\draw (0.25,0.5) circle (0.075);
\draw (0,0) circle (0.075);
\draw (-0.5,-0.5) circle (0.075);
\draw (0.5,-0.5) circle (0.075);
\draw (0,-2.5) circle (0.075);
\draw (-0.25,-1.5) circle (0.075);
\draw (0.25,-1.5) circle (0.075);
\draw (0,-1) circle (0.075);
\draw (0,-4) circle (0.075);
\draw (0,2.5) circle (0.5);
\draw (0,-3.5) circle (0.5);
\draw (0,2)--(0,1);
\draw (0,-3)--(0,-2);
\draw (0,1)--(-0.5,0);
\draw (0,1)--(0.5,0);
\draw (0.5,0)--(-0.5,0);
\draw (0.5,0)--(0.5,-1);
\draw (-0.5,0)--(-0.5,-1);
\draw (-0.5,-1)--(0.5,-1);
\draw (-0.5,-1)--(0,-2);
\draw (0.5,-1)--(0,-2);
\draw (-0.5,2.5) node{$L_{12}$};
\draw (0.5,2.5) node{$L_{1/12}$};
\draw (0.4,0.25) node{$L_{1/12,1/6}$};
\draw (0.4,-0.75) node{$L_{1/12,1/4}$};
\draw (-0.3,-1.75) node{$L_{1/12,2/3}$};
\draw (0.5,-3.5) node{$L_{1/12,1/2}$};
\end{tikzpicture}
\end{subfigure}
\end{figure}

\begin{figure}[h!]
\centering
\begin{tabular}{|c|c|c|}
\hline
Cusp & Representative & Width \\
\hline
\hline
$([0:1],...,[11:1])=(L_{12},L_{1/12,1/12},L_{1/3,1/6},L_{3/4,1/4},L_{4/3,1/3}$ & $0$ & $12$ \\
$L_{1/12,5/12}, L_{3,1/2},L_{1/12,7/12},L_{4/3},L_{2/3},L_{3/4,3/4},L_{1/3,5/6},L_{1/12,11/12})$ & & \\
\hline
$([1:6])=(L_{1/12,1/2})$ & $1/6$ & $1$ \\
\hline
$([1:8],[3:8],[1:4])=(L_{1/12,2/3},L_{3/4},L_{1/12,1/3})$ & $1/4$ & $3$ \\
\hline
$([1:9],[2:9],[1:3],[4:3])=(L_{1/12,3/4},L_{1/3,1/2},L_{1/12,1/4},L_{4/3})$ & $1/3$ & $4$ \\
\hline
$([1:2],[3:2],[5:2])=(L_{1/12,1/6},L_{3/4,1/2},L_{1/12,5/6})$ & $1/2$ & $3$ \\
\hline 
$([1:0])=(L_{1/12})$ & $\infty$ & $1$ \\
\hline
\end{tabular}
\end{figure}

\clearpage

\subsection*{$\He(13)$}

The index in $\psl$ is $14$.

\begin{figure}[h!]
\begin{subfigure}{.5\textwidth}
\centering
\includegraphics[scale=0.5]{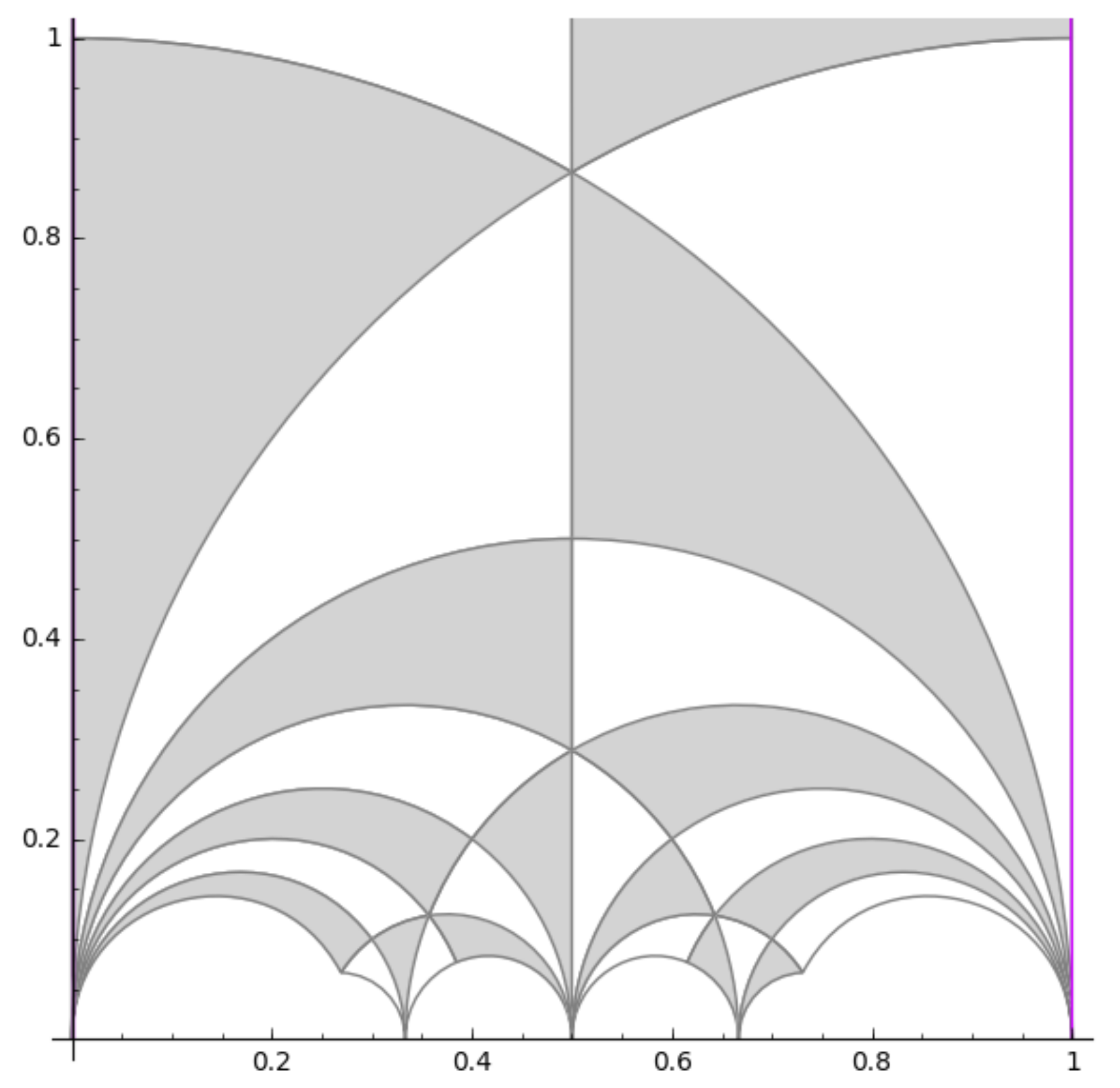}
\end{subfigure}
\begin{subfigure}{.3\textwidth}
\begin{tikzpicture}[scale=2]
\draw[fill=black] (0,1) circle (0.075);
\draw[fill=black] (0,0) circle (0.075);
\draw[fill=black] (-1,-1) circle (0.075);
\draw[fill=black] (1,-1) circle (0.075);
\draw[fill=black] (-2,-2) circle (0.075);
\draw[fill=black] (2,-2) circle (0.075);
\draw (0,2) circle (0.075);
\draw (0,0.5) circle (0.075);
\draw (-0.5,-0.5) circle (0.075);
\draw (0.5,-0.5) circle (0.075);
\draw (-1.5,-1.5) circle (0.075);
\draw (-0.5,-1.5) circle (0.075);
\draw (0.5,-1.5) circle (0.075);
\draw (1.5,-1.5) circle (0.075);
\draw (0,1.5) circle (0.5);
\draw (0,0)--(0,1);
\draw (0,0)--(-1,-1);
\draw (0,0)--(1,-1);
\draw (-1,-1)--(-2,-2);
\draw (-1,-1)--(-0.525,-1.475);
\draw (1,-1)--(2,-2);
\draw (1,-1)--(0.525,-1.475);
\draw (-0.5,1.5) node{$L_{13}$};
\draw (0.5,1.5) node{$L_{1/13}$};
\end{tikzpicture}
\end{subfigure}
\end{figure}

\begin{figure}[h!]
\centering
\begin{tabular}{|c|c|c|}
\hline
Cusp & Representative & Width \\
\hline
\hline
$([0:1],...,[12:1])=(L_{13},L_{1/13,1/13},L_{1/13,7/13},$ & $0$ & $13$ \\
$L_{1/13,9/13},L_{1/13,10/13},L_{1/13,8/13},L_{1/13,11/13},L_{1/13,2/13},L_{1/13,5/13},$ & & \\
$L_{1/13,3/13},L_{1/13,4/13},L_{1/13,6/13},L_{1/13,12/13})$ & & \\
\hline
$([1:0])=(L_{1/13})$ & $\infty$ & $1$ \\
\hline
\end{tabular}
\end{figure}

\subsection*{$\He(16)$}

The index in $\psl$ is $24$.

\begin{figure}[h!]
\centering
\includegraphics[scale=0.5]{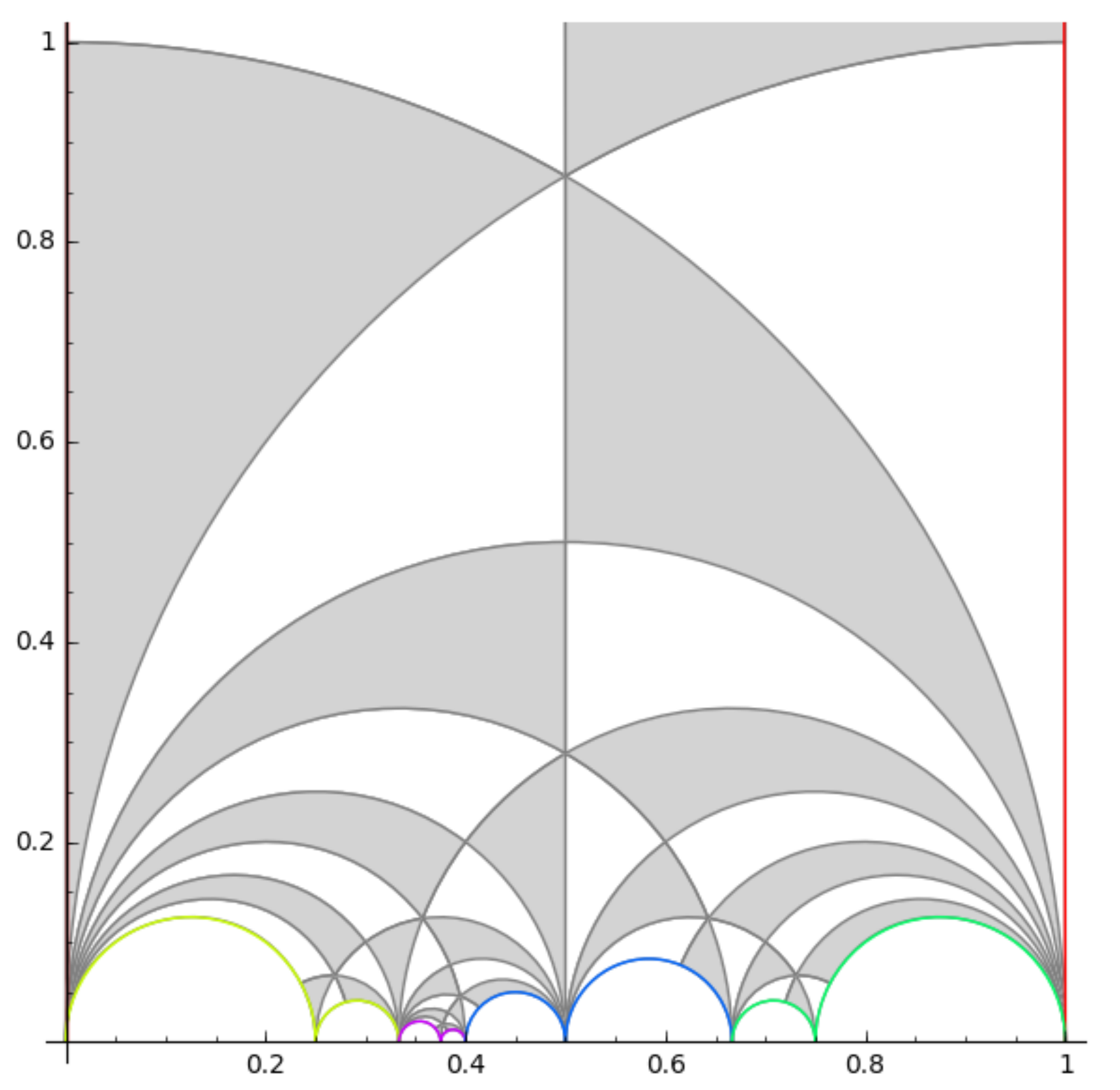}
\end{figure}
\begin{figure}[h!]
\centering
\begin{tikzpicture}[scale=2]
\draw[fill=black] (0,2) circle (0.075);
\draw[fill=black] (0,1) circle (0.075);
\draw[fill=black] (-2,0) circle (0.075);
\draw[fill=black] (-1,0) circle (0.075);
\draw[fill=black] (1,0) circle (0.075);
\draw[fill=black] (2,0) circle (0.075);
\draw[fill=black] (0,-1) circle (0.075);
\draw[fill=black] (0,-2) circle (0.075);
\draw (0,3) circle (0.075);
\draw (0,1.5) circle (0.075);
\draw (-3,0) circle (0.075);
\draw (-1.5,0) circle (0.075);
\draw (1.5,0) circle (0.075);
\draw (3,0) circle (0.075);
\draw (-0.5,0.5) circle (0.075);
\draw (0.5,0.5) circle (0.075);
\draw (-0.5,-0.5) circle (0.075);
\draw (0.5,-0.5) circle (0.075);
\draw (0,-1.5) circle (0.075);
\draw (0,-3) circle (0.075);
\draw (0,2.5) circle (0.5);
\draw (-2.5,0) circle (0.5);
\draw (2.5,0) circle (0.5);
\draw (0,-2.5) circle (0.5);
\draw (0,1)--(0,2);
\draw (0,-1)--(0,-2);
\draw (-2,0)--(-1,0);
\draw (1,0)--(2,0);
\draw (0,1)--(-1,0);
\draw (0,1)--(1,0);
\draw (0,-1)--(-1,0);
\draw (0,-1)--(1,0);
\draw (-0.5,2.5) node{$L_{16}$};
\draw (0.5,2.5) node{$L_{1/16}$};
\draw (2.5,-0.5) node{$L_{1/16,1/4}$};
\draw (-0.5,-2.5) node{$L_{1/16,1/2}$};
\draw (-2.5,0.5) node{$L_{1/16,3/4}$};
\end{tikzpicture}
\end{figure}

\begin{figure}[h!]
\centering
\begin{tabular}{|c|c|c|}
\hline
Cusp & Representative & Width \\
\hline
\hline
$([0:1],...,[15:1])=(L_{16},L_{1/16,1/16},L_{1/4,1/8},L_{1/16,11/16},$ & $0$ & $16$ \\
$L_{1,1/4},L_{1/16,13/16},L_{1/4,3/8},L_{1/16,7/16},L_{4,1/2},L_{1/16,9/16}$ & & \\
$L_{1/4,5/8},L_{1/16,3/16},L_{1,3/4},L_{1/16,5/16},L_{1/4,7/8},L_{1/16,15/16})$ & & \\
\hline
$([1:4])=(L_{1/16,1/4})$ & $1/4$ & $1$ \\
\hline
$([1:8])=(L_{1/16,1/2})$ & $3/8$ & $1$ \\
\hline
$([1:2],[3:2],[5:2],[7:2])=(L_{1/16,1/8},L_{1/16,3/8},L_{1/16,5/8},L_{1/16,7/8})$ & $1/2$ & $4$ \\
\hline
$([1:12])=(L_{1/16,3/4})$ & $3/4$ & $1$ \\
\hline
$([1:0])=(L_{1/16})$ & $\infty$ & $1$ \\
\hline
\end{tabular}
\end{figure}

\clearpage

\subsection*{$\He(18)$}

The index in $\psl$ is $36$.

\begin{figure}[h!]
\begin{subfigure}{.4\textwidth}
\centering
\includegraphics[scale=0.4]{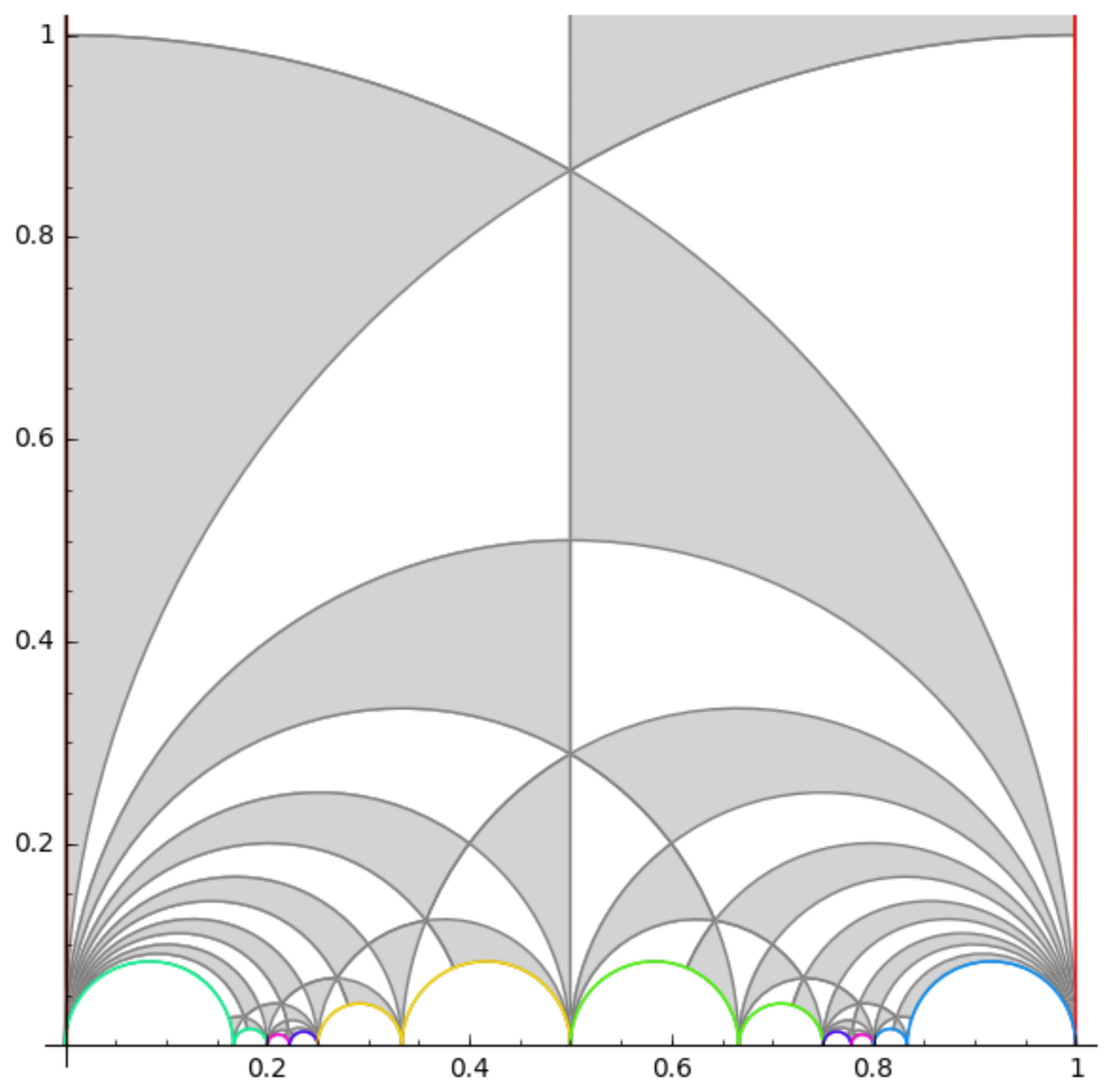}
\end{subfigure}
\begin{subfigure}{.4\textwidth}
\begin{tikzpicture}[scale=2]
\draw[fill=black] (0,1) circle(0.075);
\draw[fill=black] (-0.64,0.77) circle(0.075);
\draw[fill=black] (-0.98,0.17) circle(0.075);
\draw[fill=black] (-0.87,-0.5) circle(0.075);
\draw[fill=black] (-0.34,-0.94) circle(0.075);
\draw[fill=black] (0.34,-0.94) circle(0.075);
\draw[fill=black] (0.87,-0.5) circle(0.075);
\draw[fill=black] (0.98,0.17) circle(0.075);
\draw[fill=black] (0.64,0.77) circle(0.075);
\draw[fill=black] (0,2) circle(0.075);
\draw[fill=black] (-1.74,-1) circle(0.075);
\draw[fill=black] (1.74,-1) circle(0.075);
\draw (0,3) circle (0.075);
\draw (-2.61,-1.5) circle (0.075);
\draw (2.61,-1.5) circle (0.075);
\draw (0,1.5) circle (0.075);
\draw (-1.31,-0.75) circle (0.075);
\draw (1.31,-0.75) circle (0.075);
\draw (-0.32,0.89) circle (0.075);
\draw (0.32,0.89) circle (0.075);
\draw (-0.81,0.47) circle (0.075);
\draw (0.81,0.47) circle (0.075);
\draw (-0.93,-0.17) circle (0.075);
\draw (0.93,-0.17) circle (0.075);
\draw (-0.61,-0.73) circle (0.075);
\draw (0.61,-0.73) circle (0.075);
\draw (0,-0.94) circle (0.075);
\draw (-1.05,0.62) circle (0.075);
\draw (1.05,0.62) circle (0.075);
\draw (0,-1.26) circle (0.075);
\draw (0,2.5) circle (0.5);
\draw (-2.17,-1.25) circle (0.5);
\draw (2.17,-1.25) circle (0.5);
\draw (0,1)--(0,2);
\draw (-0.87,-0.5)--(-1.74,-1);
\draw (0.87,-0.5)--(1.74,-1);
\draw (0,1)--(-0.64,0.77);
\draw (-0.64,0.77)--(-0.98,0.17);
\draw (-0.98,0.17)--(-0.87,-0.5);
\draw (-0.87,-0.5)--(-0.34,-0.94);
\draw (-0.34,-0.94)--(0.34,-0.94);
\draw (0.34,-0.94)--(0.87,-0.5);
\draw (0.87,-0.5)--(0.98,0.17);
\draw (0.98,0.17)--(0.64,0.77);
\draw (0.64,0.77)--(0,1);
\draw (-0.64,0.77) arc(70:250:0.34);
\draw (0.98,0.17) arc(-50:100:0.34);
\draw (-0.34,-0.94) arc(180:360:0.34);
\draw (-0.5,2.5) node{$L_{18}$};
\draw (0.5,2.5) node{$L_{1/18}$};
\draw (1.4,0.3) node{$L_{1/18,1/6}$};
\draw (1.8,-1.5) node{$L_{1/18,1/3}$};
\draw (-0.5,-1.1) node{$L_{1/18,1/2}$};
\draw (-2.2,-0.8) node{$L_{1/18,2/3}$};
\draw (-1.1,0.8) node{$L_{1/18,5/6}$};
\draw (0.9,-0.3) node{$L_{1/18,2/9}$};
\end{tikzpicture}
\end{subfigure}
\end{figure}

\begin{figure}[h!]
\centering
\begin{tabular}{|c|c|c|}
\hline
Cusp & Representative & Width \\
\hline
\hline
$([0:1],...,[18:1])=(L_{18},L_{1/18,1/18},L_{2/9,1/9},L_{1/2,1/6},$ & $0$ & $18$ \\
$L_{2/9,5/9},L_{1/18,11/18},L_{2,1/3},L_{1/18,13/18},L_{2/9,7/9},L_{9/2,1/2},L_{2/9,2/9}$ & & \\
$L_{1/18,5/18},L_{2,2/3},L_{1/18,7/18},L_{2/9,4/9},L_{1/2,5/6},L_{2/9,8/9},L_{1/18,17/18})$ & & \\
\hline
$([1:12])=(L_{1/18,2/3})$ & $1/6$ & $1$ \\
\hline
$([1:9],[2:9])=(L_{1/18,1/2},L_{2/9})$ & $2/9$ & $2$ \\
\hline
$([1:2],[3:2],[5:2],[7:2],[9:2],[11:2],[13:2],$ & $1/4$ & $9$ \\
$[15:2],[17:2])=(L_{1/18,1/9},L_{1/2,1/3},L_{1/18,2/9},L_{1/18,4/9},L_{9/2},$ & & \\
$L_{1/18,5/9},L_{1/18,7/9},L_{1/2,2/3},L_{1/18,8/9})$ & & \\
\hline
$([2:3],[5:3])=(L_{2/9,1/3},L_{1/18,5/6})$ & $1/3$ & $2$ \\
\hline
$([1:3],[4:3])=(L_{1/18,1/6},L_{2/9,2/3})$ & $2/3$ & $2$ \\
\hline
$([1:6])=(L_{1/18,1/3})$ & $5/6$ & $1$ \\
\hline
$([1:0])=(L_{1/18})$ & $\infty$ & $1$ \\
\hline
\end{tabular}
\end{figure}

\clearpage

\subsection*{$\He(25)$}

The index in $\psl$ is $30$.

\begin{figure}[h!]
\centering
\includegraphics[scale=0.37]{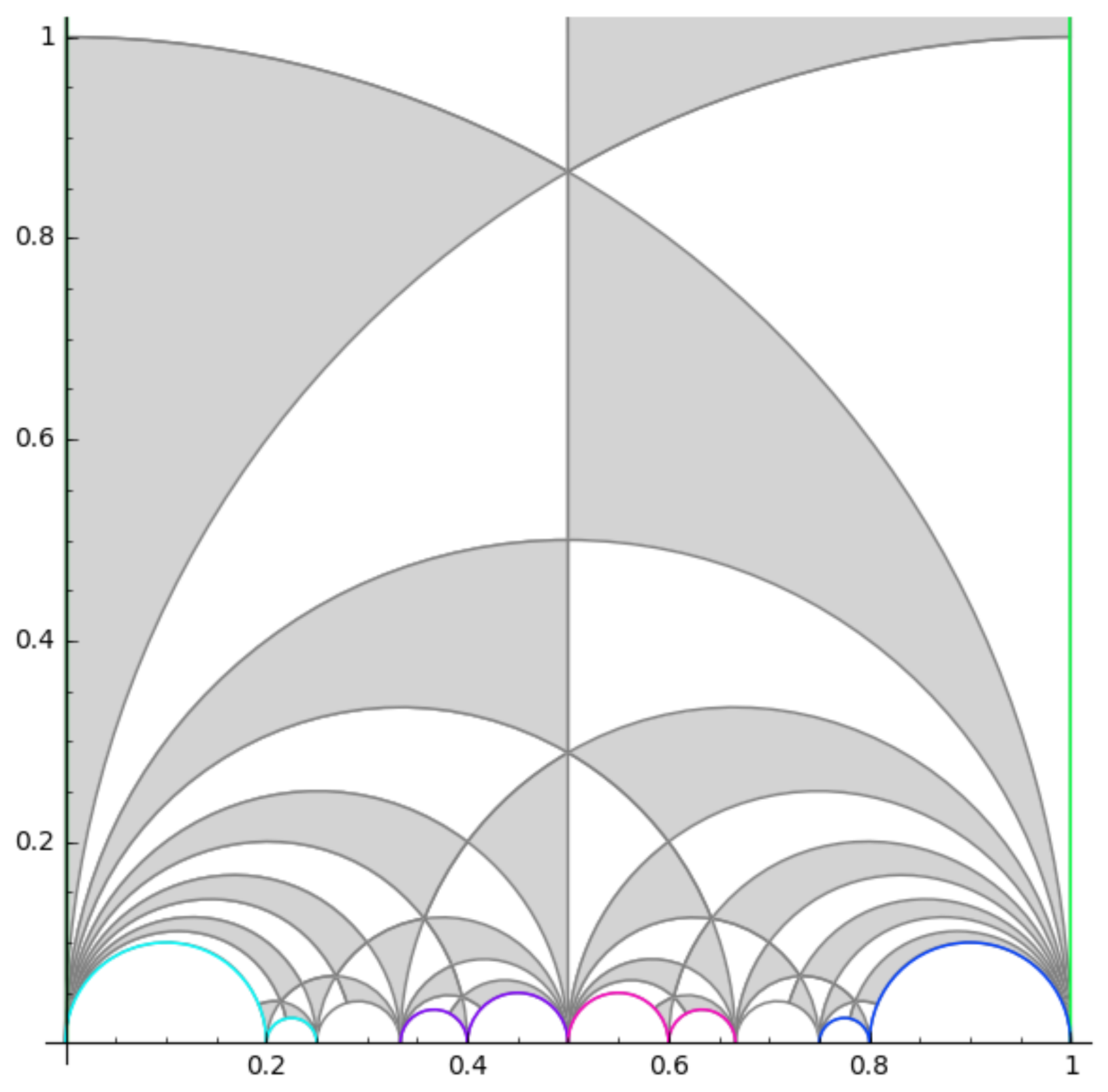}
\end{figure}

\begin{figure}[h!]
\centering
\begin{tikzpicture}[scale=1.5]
\draw[fill=black] (0,1) circle (0.1);
\draw[fill=black] (0,0) circle (0.1);
\draw[fill=black] (-2,-1) circle (0.1);
\draw[fill=black] (2,-1) circle (0.1);
\draw[fill=black] (-3,-2) circle (0.1);
\draw[fill=black] (-1,-2) circle (0.1);
\draw[fill=black] (1,-2) circle (0.1);
\draw[fill=black] (3,-2) circle (0.1);
\draw[fill=black] (-4,-3) circle (0.1);
\draw[fill=black] (4,-3) circle (0.1);
\draw (0,2) circle (0.1);
\draw (0,0.5) circle (0.1);
\draw (-1,-0.5) circle (0.1);
\draw (1,-0.5) circle (0.1);
\draw (-1.5,-1.5) circle (0.1);
\draw (-2.5,-1.5) circle (0.1);
\draw (2.5,-1.5) circle (0.1);
\draw (1.5,-1.5) circle (0.1);
\draw (-3.5,-2.5) circle (0.1);
\draw (-2.5,-2.5) circle (0.1);
\draw (3.5,-2.5) circle (0.1);
\draw (2.5,-2.5) circle (0.1);
\draw (-1,-3) circle (0.1);
\draw (1,-3) circle (0.1);
\draw (-4,-4) circle (0.1);
\draw (4,-4) circle (0.1);
\draw (0,1.5) circle (0.5);
\draw (-4,-3.5) circle (0.5);
\draw (4,-3.5) circle (0.5);
\draw (-1,-2.5) circle (0.5);
\draw (1,-2.5) circle (0.5);
\draw (0,0)--(0,1);
\draw (0,0)--(-2,-1);
\draw (0,0)--(2,-1);
\draw (-2,-1)--(-4,-3);
\draw (2,-1)--(4,-3);
\draw (-2,-1)--(-1,-2);
\draw (2,-1)--(1,-2);
\draw (-3,-2)--(-2.5,-2.5);
\draw (3,-2)--(2.5,-2.5);
\draw (-0.5,1.5) node{$L_{25}$};
\draw (0.5,1.5) node{$L_{1/25}$};
\draw (3.5,-3.5) node{$L_{1/25,1/5}$};
\draw (0.4,-2.5) node{$L_{1/25,2/5}$};
\draw (-1.5,-2.5) node{$L_{1/25,3/5}$};
\draw (-4.5,-3.5) node{$L_{1/25,4/5}$};
\end{tikzpicture}
\end{figure}

\begin{figure}[h!]
\centering
\begin{tabular}{|c|c|c|}
\hline
Cusp & Representative & Width \\
\hline
\hline
$([0:1],...,[24:1])=(L_{25}, L_{1/25,1/25},L_{1/25,13/25}, L_{1/25,17/25},$& & \\
$L_{1/25,19/25}, L_{1,1/5}, L_{1/25,21/25}, L_{1/25,18/25}, L_{1/25,22/25}, L_{1/25,14/25}, L_{1,3/5},$ & $0$ & $25$ \\
$L_{1/25,16/25}, L_{1/25,23/25}, L_{1/25,2/25}, L_{1/25,9/25}, L_{1,2/5}, L_{1/25,11/25}, L_{1/25,3/25},$ & & \\
$ L_{1/25,7/25}, L_{1/25,4/25}, L_{1,4/5}, L_{1/25,6/25}, L_{1/25,8/25}, L_{1/25,12/25}, L_{1/25,24/25}$ & & \\
\hline
$([1:20])=(L_{1/25,4/5})$ & $1/5$ & $1$ \\
\hline
$([1:15])=(L_{1/25,3/5})$ & $2/5$ & $1$ \\
\hline
$([1:10])=(L_{1/25,2/5})$ & $3/5$ & $1$ \\
\hline
$([1:5])=(L_{1/25,1/5})$ & $4/5$ & $1$ \\
\hline
$([1:0])=(L_{1/25})$ & $\infty$ & $1$ \\
\hline
\end{tabular}
\end{figure}

\clearpage

**\appendix

\section{Lattices and Hecke groups}\label{app}

We present here the approach to arithmetic groups developped by Conway in \cite{conway}, in terms of their action on lattices. The modular group $\psl$ and its Hecke congruence subgroups $(\He(N))_{N\geq1}$ naturally appear as stabilisers in $\PGL_2^+(\mathbb{Q})$ of a pair of projective lattices in a $2$-dimensional real vector space. 

We closely follow the first sections \cite{duncan} which fit our purposes well, and even restrict to two dimensions. The article \cite{plazas} ties a link between this and non-commutative geometry systems as developed by Marcolli and Connes.

%%%%%%%%%%%%%%%%%%%%%%%
\subsection{Linear transformations}
Let $V$ be a two-dimensional vector space over $\mathbb{R}$, with a basis $(e^1,e^2)$ refered to as the {\bf reference basis} is what follows, and fixed throughout our paper. 
A vector $v\in V$ is written as a row of two coordinates (generically denoted $v_1$ and $v_2$) with the basis specified when needed. 
For example, in the reference basis
\begin{equation}
v=(v_1\ v_2)=\sum_{i=1}^2v_ie^i \ .
\end{equation}

\begin{definition}
Let $f^1,f^2$ be two vectors in $V$. 
The pair $(f^1,f^2)$ is an oriented basis of $V$ if $f^1\wedge f^2$ is a strictly positive multiple of $e^1\wedge e^2$. 
Let $\Bp \subset V^2$ be the subset of oriented bases. In what follows, oriented bases $(f^1,f^2)\in\Bp$ are written as $2\times1$ matrices of vectors in $V$.
\end{definition}

The ring $\en(V)$ of endomorphisms of $V$ acts naturally on $V$ on the right: 
\begin{equation}
\left.\begin{array}{rcl}V\times\en(V) & \rightarrow & V \\
(v,A) & \mapsto & v\cdot A \end{array}\right. \ .
\end{equation}
This action induces a right-action of $\en(V)$ on $V^n$. 

The reference basis induces an isomorphism $\en(V)\simeq \mathcal{M}_2(\mathbb{R})$. Let $v\in V$ with expression $(v_1\ v_2)$ in the reference basis. A matrix $M\in\mathcal{M}_2(\mathbb{R})$ acts on $V$ as:
\begin{equation}
v\cdot M=(v_1\ v_2)\cdot\left(\begin{array}{cc} a & b \\ c & d \end{array}\right)=(av_1+cv_2\ bv_1+dv_2) \ .
\end{equation}

\begin{definition}
As usual, $\det:\en(V)\rightarrow\mathbb{R}$ is the unique map which satisfies
$$(v^1\cdot A)\wedge(v^2\cdot A)=\det(A)\cdot(v^1\wedge v^2)$$
for $A\in\en(V)$ and $v^1,v^2\in V$.
Let us also set:
\[
\left.\begin{array}{c}\GL(V)=\det^{-1}(\mathbb{R}^*)\\
\SL(V)=\det^{-1}(\{1\})\\ 
\GL^+(V)=\det^{-1}(\mathbb{R}_+^*) \ .
\end{array}\right.
\]
\end{definition}

\begin{remark}
The reference basis induces the isomorphism:
\begin{equation}
\left.\begin{array}{ccc} \Bp & \rightarrow & \GL_2^+(\mathbb{R}) \\ 
\left(\begin{array}{c}f^1\\f^2\end{array}\right) & \mapsto & \left(\begin{array}{cc} f^1_1 & f^1_2 \\ f^2_1 & f^2_2 \end{array}\right) \end{array}\right.
\end{equation}
where $f^i_1$ and $f^i_2$ are the coordinates of $f^i$ in the reference basis ($i=1,2$).
\end{remark}

Let $(v_1\ v_2)_\mathcal{B}$ be the expression of $v$ in coordinates, in a basis $\mathcal{B}$. Then $(v_1,v_2)_\mathcal{B}\cdot M^{-1}$ is the expression in coordinates of the same vector, but in the basis $M\cdot\mathcal{B}$.

%%%%%%%%%%%%%%%%%%%
\subsection{Lattices}

\begin{definition}
A \textit{lattice} $L$ in $V$ is an additive subgroup of $V$ isomorphic to $\mathbb{Z}^2$ as a $\mathbb{Z}$-module, and such that 
\[
L\otimes_{\mathbb{Z}}\mathbb{R}=V \ .
\]
Let $\La$ be the set of all lattices in $V$. 
\end{definition}

There is a natural surjection:
\begin{equation}
\left.\begin{array}{rcl} \Bp & \rightarrow & \La \\ (v^1,v^2) & \mapsto & \mathbb{Z}v^1+ \mathbb{Z}v^2 \end{array}\right. \ .
\end{equation}
The set $(v^1,v^2)$ is a basis of $L=\mathbb{Z}v^1+ \mathbb{Z}v^2$ as a free $\mathbb{Z}$-module.

\begin{proposition}
Two oriented bases 
$$\left(\begin{array}{c} v^1 \\ v^2 \end{array}\right)=\left(\begin{array}{cc} v^1_1 & v^1_2 \\ v^2_1 & v^2_2 \end{array}\right)\ \mathrm{and}\ \left(\begin{array}{c} w^1 \\ w^2 \end{array}\right)=\left(\begin{array}{cc} w^1_1 & w^1_2 \\ w^2_1 & w^2_2 \end{array}\right)$$
project to the same lattice if the two matrices are related by left-multiplication by an element of $\SL_2(\mathbb{Z})$.
\end{proposition}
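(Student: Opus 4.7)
The plan is to prove both directions of the equivalence (reading ``if'' as ``if and only if,'' as is standard for this kind of statement). Let me denote the two matrices by $V$ and $W$, so the question is when $\mathbb{Z}v^1 + \mathbb{Z}v^2 = \mathbb{Z}w^1 + \mathbb{Z}w^2$.

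First I would handle the easy direction. Suppose $W = A V$ for some $A \in \SL_2(\mathbb{Z})$. Then each $w^i$ is an integer linear combination of $v^1, v^2$, so $\mathbb{Z}w^1 + \mathbb{Z}w^2 \subseteq \mathbb{Z}v^1 + \mathbb{Z}v^2$. The key point is that $A \in \SL_2(\mathbb{Z})$ has $\det(A) = 1$, hence $A^{-1}$ also has integer entries (by Cramer's rule, or the explicit $2 \times 2$ inverse formula), so $A^{-1} \in \SL_2(\mathbb{Z})$ as well. Applying the same argument to $V = A^{-1} W$ gives the reverse inclusion, so the two lattices coincide.

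For the converse, assume $L := \mathbb{Z}v^1 + \mathbb{Z}v^2 = \mathbb{Z}w^1 + \mathbb{Z}w^2$. Since $w^1, w^2 \in L$, each is an integer combination of $v^1, v^2$, which gives an integer matrix $A \in \mathcal{M}_2(\mathbb{Z})$ with $W = A V$. Symmetrically, there is an integer matrix $B$ with $V = B W$. Substituting yields $V = B A V$, and since $V \in \GL_2^+(\mathbb{R})$ is invertible, $B A = I$. Taking determinants gives $\det(A)\det(B) = 1$ in $\mathbb{Z}$, forcing $\det(A) = \pm 1$.

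The final step uses the orientation hypothesis: both $V$ and $W$ lie in $\GL_2^+(\mathbb{R})$, so $\det(V) > 0$ and $\det(W) > 0$. From $W = A V$ we get $\det(A) = \det(W)/\det(V) > 0$, so $\det(A) = +1$ and $A \in \SL_2(\mathbb{Z})$, as required. I do not expect any serious obstacle here; the only subtlety worth flagging is the role of the orientation condition, which is precisely what rules out $\GL_2(\mathbb{Z})$ in favor of $\SL_2(\mathbb{Z})$. Without it, the natural statement would involve $\GL_2(\mathbb{Z})$, and the identification $\mathcal{L} \simeq \SL_2(\mathbb{Z}) \backslash \GL_2^+(\mathbb{R})$ from the preceding section would instead read $\mathcal{L} \simeq \GL_2(\mathbb{Z}) \backslash \GL_2(\mathbb{R})$.
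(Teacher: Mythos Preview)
Your proof is correct and follows essentially the same approach as the paper: for the nontrivial direction you express each basis in terms of the other to obtain mutually inverse integer matrices, exactly as the paper does. In fact your argument is more complete, since the paper's proof omits the easy direction entirely and also leaves implicit the final step where the orientation hypothesis forces $\det(A)=+1$ rather than $-1$; you spell both of these out.
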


\begin{proof}
Let us assume that:
$$\mathbb{Z}v^1+ \mathbb{Z}v^2=\mathbb{Z}w^1+ \mathbb{Z}w^2\ .$$
Then, there exist $m^i_j\in\mathbb{Z}$ and $n^i_j\in\mathbb{Z}$, $i,j=1,2$, such that for all $j=1,2$, one has
$$\left.\begin{array}{c}w^j=m^j_1v^1+m^j_2v^2\\
v^j=n^j_1w^1+n^j_2w^2\end{array}\right.$$
The matrices $M=(m^i_j)$ and $N=(n^i_j)$ are by construction mutually inverse. 
\end{proof}

In other words:
\begin{equation}
\La\simeq \SL_2(\mathbb{Z})\backslash\GL_2^+(\mathbb{R}) \ .
\end{equation}

Let $L_1^{np}\in\La$ be the {\bf (non-projective) reference lattice}, defined as
\begin{equation}\label{refL1}
L_1^{np}=\mathbb{Z}e^1+\mathbb{Z}e^2 \ .
\end{equation}

%%%%%%%%%%%%%%%%%%%%%%%%%%%%%%%
\subsection{Projective lattices}

The embedding
\begin{equation}
\left.\begin{array}{ccc} \mathbb{R}^\times & \rightarrow & \GL_2^+(\mathbb{R})\\
\alpha & \mapsto & \left(\begin{array}{cc} \alpha & 0 \\ 0 & \alpha \end{array}\right) \end{array}\right. 
\end{equation}
is central, hence there is a well defined left-action of $\mathbb{R}^\times$ on $\La$ given by:
\begin{equation}
\alpha\cdot \left(\SL_2(\mathbb{Z})\cdot(v_1,v_2)\right)= \SL_2(\mathbb{Z})\cdot(\alpha v_1,\alpha v_2)\ .
\end{equation}

\begin{definition}
Let $\PLa=\mathbb{R}^\times\backslash \La$ be the set of \textit{projective lattices} in $V$. 
By definition, a projective lattice is an equivalence class of lattices which are scalar multiples of each other.
\end{definition}

Let also $\PBp$ be the set of projective oriented bases in $V$, that is, $\PBp=\mathbb{R}^\times\backslash \Bp$. Hence
$\PLa\simeq \psl\backslash\PBp$, where $\psl=\{\pm1\}\backslash\SL_2(\mathbb{Z})$.
Once again, the reference basis in $V$ induces an isomorphism:
\begin{equation}
\PLa\simeq\psl\backslash \PGL_2^+(\mathbb{R}) \ .
\end{equation}

\begin{example}
The projective lattice corresponding to the coset
$$\psl\cdot\left[\begin{array}{cc} f^1_1 & f^1_2 \\ f^2_1 & f^2_2 \end{array}\right]$$
is the projective class containing the lattice generated by the vectors $f^1=(f^1_1\ f^1_2)$ and $f^2=(f^2_1\ f^2_2)$, where the coordinates are the ones in the reference basis. 
\end{example}

%%%%%%%%%%%%%%%%%%%%%%%%%
\subsection{Commensurable lattices}

\begin{definition}
A (non-projective) lattice $L^{np}\in\La$ is said to be {\bf commensurable} with $L_1^{np}$ if the intersection $L^{np}\cap L_1^{np}$ has finite index in both $L^{np}$ and $L_1^{np}$.
\end{definition}
Consider the two-dimensional $\mathbb{Q}$-vector space
\begin{equation}
V_1=L_1^{np}\otimes_\mathbb{Z}\mathbb{Q}\subset V \ .
\end{equation}
It satisfies $V_1\otimes_\mathbb{Q}\mathbb{R}=V$.
\begin{remark}\label{rem:L1}
The lattices in $V$ which are commensurable with $L_1^{np}$ correspond exactly to the additive subgroups of $V_1$ isomorphic to $\mathbb{Z}^2$ as $\mathbb{Z}$-modules. 
\end{remark}

Let $\Bp_1$ be the set of oriented bases of $V_1$, and let
\begin{equation}
\La_1:=\SL_2(\mathbb{Z})\backslash\Bp_1 \ ,
\end{equation}
By Remark \ref{rem:L1}, $\La_1$ is the subset of $\La$ which contains the lattices in $V$ commensurable with $L_1^{np}$. 
The reference basis induces the isomorphism
\begin{equation}
\La_1\simeq \SL_2(\mathbb{Z})\backslash \GL_2^+(\mathbb{Q}) \ .
\end{equation}
Let the rational projectivisation of the set of lattices commensurable with $L_1^{np}$ be the set of rationally projective lattices such that one (equivalently, all) of their representatives is commensurable with $L_1^{np}$:
$$\PLa_1=\mathbb{Q}^\times\backslash \La_1\simeq \psl\backslash\PBp_1 \ , $$ 
where $\PBp_1=\mathbb{Q}^\times\backslash \Bp_1$. The reference basis again induces:
\begin{equation}
\PLa_1\simeq\psl\backslash\PGL_2^+(\mathbb{Q}) \ .
\end{equation}
The rational projectivisation of $L_1^{np}$ is denoted $L_1$ and called the reference projective lattice, or {\bf reference lattice}, for short. We drop the $\mathrm{P}$ (standing for projective) in $L_1$ in order to keep the notation as light as possible. Hopefully, the superscript on $L_1^{np}$ which emphasizes the non-projective nature of the latter will help keeping things clear.

%%%%%%%%%%%%%%%%%%%%%%%%%%%%%
\subsection{Hyperdistance on $\PLa_1$}
Let $M=(m_i^j)$ be a non-zero $2\times2$ matrix with rational coefficients. 
There exists a smallest strictly positive rational number $\alpha_M$ such that 
$$\forall i,j\in\{1,2\},\ \alpha_M m_i^j \in\mathbb{Z}\ . $$

Let us consider the map
\begin{equation}
\left.\begin{array}{rcl}\mathrm{Pdet}: \mathcal{M}_2(\mathbb{Q}) & \rightarrow & \mathbb{Z}\\
M & \mapsto & \det (\alpha_M M)=\alpha_M^2 \det(M)\end{array}\right. \ . 
\end{equation}
For all $x\in\mathbb{Q}^\times$ one has $\mathrm{det}(xM)=\mathrm{det}(M)$, hence this map is well defined on the rational projective space $\mathrm{P}\mathcal{M}_2(\mathbb{Q})$.

\begin{proposition}
Let $A\in\mathrm{SL}_2(\mathbb{Z})$. Then for all $X\in\mathcal{M}_2(\mathbb{Q})$, one has:
$$\mathrm{Pdet}([AX])=\mathrm{Pdet}([X])=\mathrm{Pdet}([XA])$$
where $[X]$ denotes the rational projective class of $X$.
\end{proposition}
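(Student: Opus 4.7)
The plan is to reduce the claim to two simple facts: (i) left- or right-multiplication by $A\in\mathrm{SL}_2(\mathbb{Z})$ does not change the ``clearing denominator'' $\alpha_X$, and (ii) $\det$ is multiplicative with $\det A = 1$. Given these, the conclusion is immediate, since
\[
\mathrm{Pdet}(AX)=\alpha_{AX}^2\det(AX)=\alpha_X^2\det(A)\det(X)=\alpha_X^2\det(X)=\mathrm{Pdet}(X),
\]
and analogously for $XA$.

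So the substance of the proof is the equality $\alpha_{AX}=\alpha_X=\alpha_{XA}$. I would prove $\alpha_{AX}=\alpha_X$ by a double inequality. First, since $A\in\mathcal{M}_2(\mathbb{Z})$, if $\alpha X\in\mathcal{M}_2(\mathbb{Z})$ then $\alpha\,AX = A(\alpha X)\in\mathcal{M}_2(\mathbb{Z})$; by minimality this yields $\alpha_{AX}\le\alpha_X$. For the reverse inequality, the crucial ingredient is that $A\in\mathrm{SL}_2(\mathbb{Z})$ implies $A^{-1}\in\mathcal{M}_2(\mathbb{Z})$ (by Cramer's rule, $A^{-1}=\det(A)^{-1}\,\mathrm{adj}(A)=\mathrm{adj}(A)$, which has integer entries). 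Therefore, if $\beta\,AX\in\mathcal{M}_2(\mathbb{Z})$ then $\beta X = A^{-1}(\beta AX)\in\mathcal{M}_2(\mathbb{Z})$, so $\alpha_X\le\alpha_{AX}$. Combining yields equality. The argument for $\alpha_{XA}=\alpha_X$ is identical, using right-multiplication by $A$ and $A^{-1}$.

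The only (tiny) delicate point is verifying that $\alpha_M$ is well-defined and strictly positive; this is fine as soon as $M\ne0$, because the denominators of the rational entries of $M$ admit a common multiple in $\mathbb{N}_{>0}$, and among the strictly positive rationals clearing all denominators there is a smallest one (the least common multiple of the denominators, divided by the gcd of the resulting integer entries). Strictly speaking, one should also remark that the statement makes sense on the projective class $[X]$: if $x\in\mathbb{Q}^{\times}$, then $\alpha_{xX}=\alpha_X/|x|$ (in absolute value) and $\det(xX)=x^2\det(X)$, so $\alpha_{xX}^2\det(xX)=\alpha_X^2\det(X)$, confirming that $\mathrm{Pdet}$ descends to $\mathrm{P}\mathcal{M}_2(\mathbb{Q})$ and that writing $[AX]$, $[X]$, $[XA]$ is legitimate.

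I do not anticipate any real obstacle; the key insight is simply that $\mathrm{SL}_2(\mathbb{Z})$ acts on $\mathcal{M}_2(\mathbb{Z})$ by both left- and right-multiplication, and does so invertibly, so it preserves the integrality condition that defines $\alpha_X$.
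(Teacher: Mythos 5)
Your proof is correct and follows essentially the same route as the paper's: both reduce the statement to the equality $\alpha_{AX}=\alpha_X=\alpha_{XA}$, established by noting that $A$ and $A^{-1}$ have integer entries so that multiplication by $A$ preserves the set of clearing scalars, after which $\det(A)=1$ gives the result. Your extra remarks on well-definedness of $\alpha_M$ and the descent of $\mathrm{Pdet}$ to projective classes are sound but already handled separately in the paper.
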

\begin{proof}
It suffices to show that $\alpha_{AX}=\alpha_{X}=\alpha_{XA}$. Since $A$ and $A^{-1}$ have integer entries, $\alpha AX$ has integer entries if and only if $\alpha X$ has integer entries, hence 
$$\{\alpha | \alpha X \in\mathcal{M}_2(\mathbb{Z})\}=\{\alpha | \alpha AX \in\mathcal{M}_2(\mathbb{Z})\} \ , $$
and they have the same minimal element.
\end{proof}

\begin{definition}
The projective determinant (still denoted $\mathrm{Pdet}$) is the (induced) function
$$\mathrm{Pdet}:\mathrm{PSL}_2(\mathbb{Z})\backslash\PGL_2^+(\mathbb{Q})\rightarrow \mathbb{N}_{>0} \ .$$
It is invariant under the right-action of $\mathrm{PSL}_2(\mathbb{Z})$.
\end{definition}

Let $L,L'\in\PLa_1$, and let $M,M'$ be representatives in $\GL_2^+(\mathbb{Q})$ of the corresponding elements in $\mathrm{PSL}_2(\mathbb{Z})\backslash\PGL_2^+(\mathbb{Q})$. Set:
\begin{equation}\label{delta}
\delta(L,L')=\mathrm{Pdet}(M(M')^{-1}) \ .
\end{equation}

\begin{proposition}
The function 
$$\delta:\PLa_1\times\PLa_1\rightarrow\mathbb{N}_{>0}$$ 
is symmetric. 
It is called {\bf hyperdistance}.
\end{proposition}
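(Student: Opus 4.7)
The statement amounts to showing $\mathrm{Pdet}(X) = \mathrm{Pdet}(X^{-1})$ for every $X \in \GL_2^+(\mathbb{Q})$, applied to $X = M(M')^{-1}$; well-definedness on cosets follows from the preceding proposition about left/right $\SL_2(\mathbb{Z})$-invariance. My plan is to identify, in each rational projective class, a canonical ``primitive'' integer representative, and then show that inversion preserves primitivity and determinant in dimension~$2$.

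First, I would observe that for any $X \in \GL_2^+(\mathbb{Q})$, the scaled matrix $Y := \alpha_X X$ is by construction an integer matrix, and moreover it is \emph{primitive}, meaning that the gcd of its four entries equals $1$: if $d = \gcd$ of the entries of $Y$ were larger than $1$, then $\alpha_X/d$ would be a strictly smaller positive rational making $(\alpha_X/d)X$ integral, contradicting the minimality of $\alpha_X$. Conversely, every projective class contains a unique primitive integer representative up to sign, so $Y$ is essentially canonical. By definition $\mathrm{Pdet}(X) = \det(Y)$, which is a strictly positive integer because $X \in \GL_2^+(\mathbb{Q})$.

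Next, I would compute $\mathrm{Pdet}(X^{-1})$ by exhibiting the primitive representative of $[X^{-1}]$. Since $X^{-1} = \alpha_X Y^{-1}$ and both sides are projectively equivalent, it suffices to work with $Y^{-1}$. Writing $Y = \begin{pmatrix} a & b \\ c & d \end{pmatrix}$ with $\det(Y) = N > 0$, one has $Y^{-1} = \tfrac{1}{N}\mathrm{adj}(Y)$ where
\[
\mathrm{adj}(Y) = \begin{pmatrix} d & -b \\ -c & a \end{pmatrix}.
\]
This is the crucial two-dimensional feature: the entries of $\mathrm{adj}(Y)$ are (up to sign) a permutation of the entries of $Y$, so $\gcd$ of the entries of $\mathrm{adj}(Y)$ equals $\gcd(a,b,c,d) = 1$. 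Hence $\mathrm{adj}(Y)$ is already a primitive integer matrix, and it lies in $[X^{-1}]$. Therefore
\[
\mathrm{Pdet}(X^{-1}) = \det(\mathrm{adj}(Y)) = ad - bc = \det(Y) = \mathrm{Pdet}(X),
\]
using the elementary $2\times 2$ identity $\det(\mathrm{adj}(Y)) = \det(Y)$.

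Taking $X = M(M')^{-1}$ and $X^{-1} = M'M^{-1}$ yields $\delta(L,L') = \delta(L',L)$, proving symmetry. I expect the only potentially tricky point to be phrasing the ``canonical primitive representative'' cleanly; the argument is genuinely $2$-dimensional, since in higher rank $\det(\mathrm{adj}(Y)) = \det(Y)^{n-1}$ and the gcd of the minors need not match the gcd of the entries, so the statement would require a different formulation. Positivity of $\delta$ is automatic from the observation that the primitive representative of an element of $\GL_2^+(\mathbb{Q})$ has strictly positive determinant in $\mathbb{N}_{>0}$.
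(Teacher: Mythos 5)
Your proof is correct and takes essentially the same route as the paper: both arguments hinge on the two-by-two adjugate identity $\det(Y)Y^{-1}=\left(\begin{smallmatrix} d & -b \\ -c & a\end{smallmatrix}\right)$, whose entries are those of $Y$ up to sign, the paper phrasing this as ``$Y$ is integral iff $\det(Y)Y^{-1}$ is'' to deduce $\alpha_{M^{-1}}=\alpha_M\det(M)$, while you phrase it as ``the primitive (gcd-one) representative is sent to a primitive representative of the inverse class.'' Your write-up merely makes explicit the identification of $\alpha_X X$ with the primitive integer representative, a minimality point the paper leaves implicit, so the two proofs are the same in substance.
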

\begin{proof}
Let $M\in\mathcal{M}_2(\mathbb{Z})$ be invertible as a rational matrix. Then, $\det(M)M^{-1}\in\mathcal{M}_2(\mathbb{Z})$. Replacing $M$ in $\det(M)M^{-1}$ with $\det(M)M^{-1}$ implies that if $\det(M)M^{-1}$ is an invertible rational matrix with integer entries, $\det\left(\det(M)M^{-1}\right) \left(\det(M)M^{-1}\right)^{-1}=M\in\mathcal{M}_2(\mathbb{Z})$. 

Thus a $2\times2$ invertible rational matrix $M$ has integer entries if and only if $\det(M)M^{-1}$ does, and hence $\alpha_MM$ has integer entries if and only if  $\alpha_M\det(M)M^{-1}$ does. This implies:
$$\alpha_{M^{-1}}=\alpha_M\det(M)\ .$$
As a consequence of this last equality, one has $\mathrm{Pdet}(M)=\mathrm{Pdet}(M^{-1})$, which proves the claim.
\end{proof}

\begin{remark}
The logarithm of the (judiciously named) hyperdistance is a metric on $\PLa_1$ (see \cite{conway}). Note that in dimension strictly greater than $2$, the function analogous to $\delta$ is not symmetric anymore. 
\end{remark}

Let $N\in\mathbb{N}_{>0}$. The set of projective lattices $N$-hyperdistant from $L_1$ is the set
\begin{equation}
\mathrm{P}\mathcal{L}_1^N=\{L\in\PLa_1|\delta(L,L_1)=N\} \ .
\end{equation}
This particular subset of $\PLa_1$ can be characterised as follows. Let $\tilde{L}^{np}$ be any representative of some $L\in\PLa_1$ such that $\tilde{L}^{np}$ is a subgroup of $L_1^{np}$. The index of $\tilde{L}^{np}$ in $L_1^{np}$ is as usual the order of the finite cyclic abelian group $\tilde{L}^{np}\backslash L_1^{np}$. Then, $\mathrm{P}\mathcal{L}_1^N$ consists of the projective lattices in $\PLa_1$ such that among all their representatives which are subgroups of $L_1^{np}$, the minimum of the index function is $N$.

For exemple, consider any sublattice $\tilde{L}^{np}$ of index $2$ in $L_1^{np}$. Since $2$ is prime, the projective class of $\tilde{L}^{np}$ is always $2$-hyperdistant from $L_1$. However, the representative $2\cdot\tilde{L}^{np}$ of the same projective lattice is of index $8$ $(= 2\times 2^2)$ in $L_1^{np}$.

%%%%%%%%%%%%%%%%%%%%%%%%%
\subsection{Elements of $\PLa_1$}
Let us describe and label the elements of $\PLa_1=\psl\backslash\PGL_2^+(\mathbb{Q})$ as in \cite{conway}.
Consider the map
\begin{equation}
\GL_2^+(\mathbb{Q})\rightarrow \psl\backslash\PGL_2^+(\mathbb{Q}) \ .
\end{equation}
For each coset $\psl\cdot g$ in its image, let
$g=\left[\begin{array}{cc} a & b \\ c & d \end{array}\right]\in\PGL_2^+(\mathbb{Q})$ denote the projective class of the matrix
$\left(\begin{array}{cc} a & b \\ c & d \end{array}\right)\in\GL_2^+(\mathbb{Q})$.

Let $s,t\in\mathbb{Z}$ be such that $sa+tc=0$, with $s$ and $t$ relatively prime. Since the columns of $g$ are linearly independent, it must be that $sb+td\neq0$. Since $s$ and $t$ have no common factor, there exist $m,n\in\mathbb{Z}$ such that $mt-sn=1$. In other words, there exists $H\in\psl$ such that:
$$H\cdot g=\left[\begin{array}{cc} m & n \\ s & t \end{array}\right]\left[\begin{array}{cc} a & b \\ c & d \end{array}\right]=\left[\begin{array}{cc} a' & b' \\ 0 & d' \end{array}\right]=g' \ , $$
with $b'\in\mathbb{Q}$ and $a',d'\in\mathbb{Q}^\times$. 
Moreover,
$\left[\begin{array}{cc} a' & b' \\ 0 & d' \end{array}\right]=\left[\begin{array}{cc} a'' & b'' \\ 0 & 1 \end{array}\right]$
with $a''=a'/d'$ and $b''=b'/d'$. 
Let $N$ be the unique integer such that $0\leq b''+N < 1$. Then, left-multiplication of the latter element of $\PGL_2^+(\mathbb{Q})$ by 
$\left[\begin{array}{cc} 1 & N \\ 0 & 1 \end{array}\right]\in\psl$
yields some
$\left[\begin{array}{cc} M & b \\ 0 & 1 \end{array}\right]$.
Furthermore, the only element in $\psl$ which maps representatives of projective classes of this form to representatives of the same form is easily shown to be the identity. Hence we have proved the following 

\begin{proposition}\label{representatives}
Let $\mathcal{M}$ be the set of matrices of the form
$\left(\begin{array}{cc} M & b \\ 0 & 1 \end{array}\right)$
with $M\in\mathbb{Q}_+^*$ and $b\in\mathbb{Q}\cap[0,1[$. Then
$$\left.\begin{array}{rcl}\mathcal{M} & \rightarrow & \psl\backslash\PGL_2^+(\mathbb{Q})\\ \left(\begin{array}{cc} M & b \\ 0 & 1 \end{array}\right) & \mapsto & \psl\cdot \left[\begin{array}{cc} M & b \\ 0 & 1 \end{array}\right]\end{array}\right.$$
is a bijection. 
\end{proposition}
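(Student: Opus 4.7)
The plan is to prove bijectivity by establishing surjectivity and injectivity separately. The surjectivity part is essentially carried out in the discussion preceding the statement (the sequence of left-multiplications by $\psl$-elements and rescaling). What is still really needed is injectivity: the claim that the only element of $\psl$ which carries a matrix of the form $\begin{pmatrix} M & b \\ 0 & 1\end{pmatrix}$ (with $M\in\mathbb{Q}^*_+$, $b\in[0,1[\cap\mathbb{Q}$) to another one, up to rational scaling, is the identity. Once both are in hand, the map is a well-defined bijection.

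For surjectivity, given $g=\begin{pmatrix}a&b\\ c&d\end{pmatrix}\in\GL_2^+(\mathbb{Q})$, first I would choose $s,t\in\mathbb{Z}$ coprime with $sa+tc=0$ (take $s,t$ to be $-c,a$ divided by $\gcd$, up to sign), then extend to $H=\begin{pmatrix}m&n\\ s&t\end{pmatrix}\in\SL_2(\mathbb{Z})$ via Bézout. Left-multiplying kills the $(2,1)$-entry, yielding an upper-triangular representative $\begin{pmatrix}a'&b'\\ 0&d'\end{pmatrix}$ whose determinant $a'd'>0$ forces $a'$ and $d'$ to have the same sign. Rescaling projectively by $1/d'$ (which, if negative, simultaneously flips both diagonal signs) produces a representative with bottom-right entry $1$ and top-left entry $M=a'/d'>0$. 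Finally, translating by $\begin{pmatrix}1&k\\ 0&1\end{pmatrix}\in\psl$ for the unique $k\in\mathbb{Z}$ such that $b'/d'+k\in[0,1[$ places the representative in $\mathcal{M}$.

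For injectivity, suppose $\begin{pmatrix}M&b\\ 0&1\end{pmatrix}$ and $\begin{pmatrix}M'&b'\\ 0&1\end{pmatrix}$ lie in the same coset, so there exists $H=\begin{pmatrix}p&q\\ r&s\end{pmatrix}\in\SL_2(\mathbb{Z})$ (well-defined up to $\pm I$) and $\lambda\in\mathbb{Q}^*$ with
\[
\begin{pmatrix}p&q\\ r&s\end{pmatrix}\begin{pmatrix}M&b\\ 0&1\end{pmatrix}=\lambda\begin{pmatrix}M'&b'\\ 0&1\end{pmatrix}.
\]
Expanding, the $(2,1)$-entry gives $rM=0$, hence $r=0$ since $M>0$; then $ps=1$ with $p,s\in\mathbb{Z}$ forces $p=s=\pm 1$. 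After passing to the $\pm I$-quotient I may take $p=s=1$, giving $\lambda=1$, $M=M'$, and $b+q=b'$; the constraint $b,b'\in[0,1[$ forces $q=0$ and thus $H=\pm I$ in $\SL_2(\mathbb{Z})$, i.e.\ $H$ is trivial in $\psl$. Hence the two matrices coincide.

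I do not anticipate a real obstacle here; the only place that demands some care is tracking the projective rescaling and the $\{\pm I\}$-ambiguity simultaneously while normalizing the bottom-right entry to $1$ with $M$ positive, but the positive-determinant hypothesis makes this automatic.
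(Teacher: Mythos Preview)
Your proposal is correct and follows essentially the same approach as the paper: the surjectivity argument (Bézout to kill the $(2,1)$-entry, projective rescaling to normalise the bottom row, then a unipotent translation to force $b\in[0,1[$) is exactly the reduction the paper carries out in the paragraph preceding the statement, and your injectivity computation is precisely what the paper means when it asserts that ``the only element in $\psl$ which maps representatives of projective classes of this form to representatives of the same form is easily shown to be the identity.'' You have simply made explicit the positivity check $a'd'>0\Rightarrow M>0$ and the short linear-algebra verification that the paper leaves to the reader.
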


\begin{definition}
Let $g_{M,b}$ denote the coset 
$$\psl\cdot\left[\begin{array}{cc} M & b \\ 0 & 1 \end{array}\right]\in\psl\backslash\PGL_2^+(\mathbb{Q}) \ .$$
Let $L_{M,b}:=\psl\cdot g_{M,b}$ be the projective lattice corresponding to the class of $g_{M,b}$. We always shorten $g_{M,0}$ and $L_{M,0}$ to $g_M$ and $L_M$. 
\end{definition}
Note that this definition of $L_1$ coincides with the first one we considered.

\begin{corollary}
This classification of the cosets in $\psl\backslash\PGL_2^+(\mathbb{Q})$ implies that any projective lattice commensurable with $L_1$ has a unique non-projective representative with basis of the form
$$f^1=(M\ b),\ f^2=(0\ 1)\ ,$$
where $M\in\mathbb{Q}_+^*$ and $b\in\mathbb{Q}\cap[0,1[$.
\end{corollary}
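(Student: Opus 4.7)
The plan is to deduce this corollary essentially as a direct consequence of Proposition \ref{representatives}, with some care about passing between projective and non-projective data. I would start by recalling that, via the reference basis, a projective lattice commensurable with $L_1$ is nothing but a coset in $\psl\backslash\PGL_2^+(\mathbb{Q})$. Proposition \ref{representatives} then tells me that this coset contains a unique projective class of the form $\left[\begin{smallmatrix} M & b \\ 0 & 1 \end{smallmatrix}\right]$ with $M\in\mathbb{Q}_+^*$ and $b\in\mathbb{Q}\cap[0,1[$.

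Next I would promote this projective class to a canonical non-projective lift: among all scalar multiples $\lambda\left(\begin{smallmatrix} M & b \\ 0 & 1 \end{smallmatrix}\right)$ with $\lambda\in\mathbb{Q}^\times$, exactly one has bottom-right entry equal to $1$, namely $\lambda=1$. This matrix encodes the oriented basis $f^1=(M\ b)$, $f^2=(0\ 1)$ of a specific non-projective lattice $L^{np}\in\La_1$, whose projectivisation is the projective lattice we started from. This gives existence.

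For uniqueness, I would argue as follows. Suppose $((M,b),(0,1))$ and $((M',b'),(0,1))$ both have the required form and define non-projective representatives of the same projective lattice. Then the corresponding matrices lie in the same $\psl$-orbit inside $\PGL_2^+(\mathbb{Q})$, hence define the same projective class by Proposition \ref{representatives}, so $M=M'$ and $b=b'$. More concretely, if one wants to check this at the level of matrices, any $\SL_2(\mathbb{Z})$-relation $A\cdot\lambda\left(\begin{smallmatrix}M & b\\ 0 & 1\end{smallmatrix}\right) = \left(\begin{smallmatrix}M' & b'\\ 0 & 1\end{smallmatrix}\right)$ forces the lower-left entry of $A$ to vanish (because $\lambda M\neq 0$), so $A$ is upper-triangular with diagonal $(\pm 1,\pm 1)$; matching the $(2,2)$-entry gives $\lambda=\pm 1$, and then the constraint $b,b'\in[0,1[$ kills the residual translation.

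The only mild subtlety, and the one I would pay the most attention to in writing this cleanly, is the interplay between rational projective equivalence (used to define $\PLa_1$), real scaling (used to pass from non-projective to projective lattices), and the sign ambiguity in $\psl=\SL_2(\mathbb{Z})/\{\pm 1\}$. None of these causes real trouble here, because fixing the $(2,2)$-entry to $1$ rigidifies both the scalar and the sign. Once this bookkeeping is handled, the corollary is just a rephrasing of Proposition \ref{representatives} in the language of bases of non-projective lattices.
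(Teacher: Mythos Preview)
Your proposal is correct and follows exactly the approach the paper intends: the corollary is stated without proof, as an immediate reformulation of Proposition \ref{representatives} in terms of bases of non-projective lattices. Your write-up is in fact more careful than the paper, since you spell out the passage from the projective coset representative to the non-projective lift and verify uniqueness at the matrix level, handling the scalar and sign ambiguities explicitly.
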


\begin{example}
The projective lattice $L_N$ corresponds to the coset
$\psl\cdot\left[\begin{array}{cc} N & 0 \\ 0 & 1 \end{array}\right]$, and
hence to the class of non-projective lattices 
$\{\mathbb{Z}\cdot(\alpha N\ 0)+\mathbb{Z}\cdot(0\ \alpha)|\ \alpha\in\mathbb{Q}^\times\}$.
\end{example}

%%%%
\begin{figure}[h!t!]
\centering
\begin{tikzpicture}[scale=0.8]
\filldraw[fill=green!20!white, draw=green!40!black] (0,0) rectangle (0.667,2);
\filldraw[fill=red!20!white, draw=green!40!black] (0,0) rectangle (0.5,1.5);
\draw[red] (-0.1,0)--(4.1,0);
\draw[dashed, green] (-0.1,0)--(4.1,0);
\draw (-0.1,0.5)--(4.1,0.5);
\draw (-0.1,1)--(4.1,1);
\draw[red] (-0.1,1.5)--(4.1,1.5);
\draw[green] (-0.1,2)--(4.1,2);
\draw (-0.1,2.5)--(4.1,2.5);
\draw[red] (-0.1,3)--(4.1,3);
\draw (-0.1,3.5)--(4.1,3.5);
\draw[green] (-0.1,4)--(4.1,4);

\draw[red] (0,-0.1)--(0,4.1);
\draw[dashed, green] (0,-0.1)--(0,4.1);
\draw[red] (0.5,-0.1)--(0.5,4.1);
\draw[red] (1,-0.1)--(1,4.1);
\draw[red] (1.5,-0.1)--(1.5,4.1);
\draw[red] (2,-0.1)--(2,4.1);
\draw[dashed, green] (2,-0.1)--(2,4.1);
\draw[red] (2.5,-0.1)--(2.5,4.1);
\draw[red] (3,-0.1)--(3,4.1);
\draw[red] (3.5,-0.1)--(3.5,4.1);
\draw[red] (4,-0.1)--(4,4.1);
\draw[dashed, green] (4,-0.1)--(4,4.1);

\draw[green] (0.667,-0.1)--(0.667,4.1);
\draw[green] (1.33,-0.1)--(1.33,4.1);
\draw[green] (2.667,-0.1)--(2.667,4.1);
\draw[green] (3.33,-0.1)--(3.33,4.1);

\draw (0.5,0)--(0.4,0.1);
\draw (0.5,0)--(0.4,-0.1);
\draw (0,0.5)--(0.1,0.4);
\draw (0,0.5)--(-0.1,0.4);

\draw (-0.25,0.25) node{$e^1$};
\draw (0.25,-0.25) node{$e^2$};
\end{tikzpicture}
\caption{{\sf Two non-projective representatives of $L_3$ (in green and red) on $L_1^{np}$.}
\label{f:egcoset}}
\end{figure}
%%%%

%%%%%%%%%%%%%%%%%%%%%%%%%%
\subsection{Stabilisers and Hecke Congruence Subgroups of $\psl$}
Let $G:=\PGL_2^+(\mathbb{Q})$ and consider its right-action on $\PLa_1$
\begin{equation}
\left[\begin{array}{cc} f^1_1 & f^1_2 \\ f^2_1 & f^2_2 \end{array}\right]\cdot\left[\begin{array}{cc} a & b \\ c & d \end{array}\right]=\left[\begin{array}{cc} af^1_1+cf^1_2 & bf^1_1 + df^1_2 \\ af^2_1+ cf^2_2 & bf^2_1+ df^2_2\end{array}\right] \ .
\end{equation}
Let $G_L:=\fix_G(L)$ be the stabiliser of $L\in\PLa_1$ in $G$. The group $G_{L_1}$ is easily shown to be $\psl$. This is the definition of the modular group we were aiming for. Now, since $G$ acts transitively on $\PLa_1$, the stabiliser of any $L\in\PLa_1$ is a conjugate of $G_1$ in $\PGL_2^+(\mathbb{Q})$. 
For example, and for $M\in\mathbb{Q}_+^*$, one has
\begin{equation}
G_M=g_M^{-1}G_1g_M=\left\{\left[\begin{array}{cc} a & b/M \\ cM & d \end{array}\right] | \ a,b,c,d\in\mathbb{Z},\ ad-bc=1\right\} \ .
\end{equation}
Subsequently, the subgroup of $G$ which stabilizes the pair $(L_1,L_N)$ is $G_{(L_1,L_N)}=G_1\cap G_N$. For $N\in\mathbb{N}_{>0}$ one has:
\begin{equation}
G_{(L_1,L_N)}=\left\{\left[\begin{array}{cc} a & b \\ cN & d \end{array}\right] | \ a,b,c,d \in\mathbb{Z},\ ad-bcN=1\right\} \ .
\end{equation}
\begin{definition}
Let $N$ be a positive integer. 
The {\bf Hecke congruence subgroup} of level $N$ of the modular group is the group
\[
\He(N):=\left\{\left[\begin{array}{cc} a & b \\ c & d \end{array}\right] | \ a,b,c,d \in\mathbb{Z},\ ad-bc=1,\ c\equiv 0 [N]\right\}<\psl \ .
\]
Note that $\He(1)=\psl$. 
\end{definition}

\bibliography{ref}

\end{document}